\newtheorem{theorem}{Theorem}
\newcounter{prop}
\newtheorem{proposition}[prop]{Proposition}
\newcounter{cor}
\newtheorem{corollary}[cor]{Corollary}
\newcounter{lem}
\newtheorem{lemma}[lem]{Lemma}
\newcounter{def}
\newtheorem{definition}[def]{Definition}
\newcounter{rem}
\newtheorem{remark}[rem]{Remark}
\newcommand{\EE}{\mathbb{E}}
\newcommand{\PP}{\mathbb{P}}
\newcounter{assumptionH}
\newenvironment{assumptionH}%
{%
\medskip
\noindent\textbf{Assumptions.}
  \begin{enumerate}[({H}1)]%
  \setcounter{enumi}{\value{assumptionH}}%
}{%
  \setcounter{assumptionH}{\value{enumi}}%
  \end{enumerate}
}
\newcounter{assumptionD}
\newenvironment{assumptionD}%
{%
	\medskip
	\noindent\textbf{Assumptions.}
	\begin{enumerate}[({D}1)]%
		\setcounter{enumi}{\value{assumptionD}}%
	}{%
		\setcounter{assumptionD}{\value{enumi}}%
	\end{enumerate}
}
\newcommand{\NN}{\mathbb{N}}
\newcommand{\RR}{\mathbb{R}}
\newcommand{\Yim}{{Y_{i,m}}}
    \newcommand{\Tmi}{\mathbf T_{i,m}}
\newcommand{\Xp}[1]{X_{#1}}
\newcommand{\tobs}{\mathcal T^{\rm obs}}
\newcommand{\Tobs}[1]{\mathcal T_{#1}}
\newcommand{\alphamu}{\alpha_\mu}
\newcommand{\hatalphamu}{\widehat {\alpha}}
\author{Omar Kassi\footnote{Corresponding author. Ensai, CREST - UMR CNRS 9194, France; omar.kassi@ensai.fr} \qquad Valentin Patilea\footnote{Ensai, CREST - UMR CNRS 9194, France; valentin.patilea@ensai.fr}}
\date{\today}
\title{Optimal inference for the mean of random functions}
\begin{document}
	\maketitle

	\begin{abstract}

We study estimation and inference for the mean of real-valued random functions defined on a hypercube. The independent random functions are observed on a discrete, random subset of design points, possibly with  heteroscedastic noise. We propose a novel optimal-rate estimator based on Fourier series expansions and establish a sharp non-asymptotic error bound in $L^2-$norm. Additionally, we derive a non-asymptotic Gaussian approximation bound for our estimated Fourier coefficients. Pointwise and uniform confidence sets are constructed. 
Our approach is made adaptive by a plug-in estimator for the Hölder regularity of the mean function, for which we derive non-asymptotic concentration bounds.

\medskip 

\textbf{Key words:}  de La Vallée Poussin operator; Dense and sparse designs; 
%Hölder regularity; 
Monte Carlo linear integration; Optimal plug-in estimators; Stein's method

\medskip 

\noindent \textbf{MSC2020: } 62R10; 62G05; 62G15

	\end{abstract}

%\newpage

%------------------------------------------------------------------

% !TeX root = ../mean.tex

\section{Introduction}\label{sec:intro}

Modern studies from a variety of fields such as biophysics, neuroscience, environmental science, and medicine, record multiple observations obtained as discretized measurements of curves or surfaces. One example is the Argo project, which has recently become an important source of data for oceanography and climate research \citep[see, e.g., ][]{YSH2022}. The usual modeling paradigm is to consider that the curves or surfaces are realizations of some \emph{random function} defined on a one- or multidimensional domain.  See also \cite{KS2023} and \cite{GRLG2024} for recent surveys. 

The problem of estimating the mean of a  {random function} (sometimes called \emph{random field})  is considered, when the random function is defined on a multidimensional compact domain and several independent realizations are observed at random design points, possibly with heteroscedastic error. The number of design points of each realization of the random function can be as low as 1, or   arbitrarily large. Another aim of this study is  to construct pointwise confidence intervals and confidence bands for the mean function. For these purposes, we propose a new Fourier series-based approach. We construct an optimal rate mean estimator and confidence sets with optimal rate diameter over classes of Hölder functions. In the interest of adaptivity, we also propose an estimator for the Hölder regularity of the mean function.

The estimation of the mean of random functions has been mainly studied in the case of $1-$dimensional domains using smoothing such as  kernels \citep{ZW2016} or splines \citep{CY2011}, \citep{X2020}. Pointwise, $L^2$ and uniform rates of convergence in probability have been derived, with random design or a common grid of design points for all the realizations. \citep{ZW2016} derive also a pointwise asymptotic normality result. Our mean estimation problem is also related to the nonparametric regression with dependent errors, see \cite{Ef1999}, \cite{BCV2001}. 

A variety of methods for simultaneous inference of the mean function with functional data have been proposed recently, with simulation based methods being currently the most successful. See, for example, \cite{D2011}, \cite{CR2018}, \cite{DKA2020}, \cite{WWWO2020}, \cite{TS2022} and \cite{LR2023}. The latter reference includes a clear and informative up-to-date review. Many existing approaches for constructing confidence bands assume the  realizations of the random function fully observed, or at least, they are observed on a sufficiently dense grid of fixed design points such that the discretization error is negligible. In such cases, the mean estimator  converges to a Gaussian process and the inference is straightforward. Related to the inference problem we study is the extensively studied topic of adaptive confidence bands in nonparametric regression, see for example \cite{HN2011}, \cite{CBadapt2014}. It appears that the study of adaptive confidence regions and the estimation of regularity for the mean of random functions have not yet been addressed.

The paper is organized as follows. In Section \ref{sec:problem}, the definitions related to multivariate random functions are introduced and the functional data with random design is described. Moreover, we introduce the \emph{de La Vall\'ee Poussin series} and the relevant results emanating from the approximation theory of real-valued functions defined on a multidimensional unit cube. The new idea of estimating the Fourier coefficients through a fast integration technique based on \emph{control variates} is introduced in Section \ref{new_fo_coe}. Non-asymptotic bounds on the quadratic risk on such Fourier coefficients are derived. The mean function estimator based on de La Vall\'ee Poussin series, with fast integration-based estimates of the Fourier coefficients, is studied in Section \ref{sec:mean_fct_rate}. A non-asymptotic bound on the $L^2-$error of our estimator is derived, with explicit constants, and used to define the \emph{optimal} mean estimator. 
Our bound seems to be new,  requires no assumption about the number of design points, and holds \emph{uniformly} over increasing sets of Hölder  functions which are limits of Fourier series and have the uniform norm less than a diverging sequence of numbers determined by the sample size.
The $L^2-$error bound determines the phase transition between the \emph{dense} and \emph{sparse regime}, which in our context depends on the regularity of the mean function. Within the dense regime, the number of random design points is such that the optimal mean estimator attains the parametric rate of the ideal empirical estimator based on fully observed random functions. On the contrary, within the  sparse regime, the number of random design points is too low, potentially as low as one point per random function, thereby rendering  the parametric rate unattainable. Section \ref{sec:infer} is dedicated to the inference for the vectors of Fourier coefficients. A non-asymptotic bound on the \emph{Gaussian approximation} of our Fourier coefficients estimator is derived. Consequently, \emph{pointwise confidence intervals} and \emph{confidence bands} for the mean function are obtained. Our Gaussian approximation and the inference results appear to be new in the context of random functions. The critical values for the confidence regions can be derived from the asymptotic variance of the Gaussian approximation. Our estimation and inference approach for the mean function crucially depends on the random nature of the design. In this paper, we assume that we know the density of the design, but the results can be easily adapted to allow for an estimated design density, as long as it is smoother than the mean function. Section \ref{sec:reg} is dedicated to the estimation of the Hölder regularity of the mean function, the objective being to make the mean estimation and inference \emph{adaptive}. First, we propose to use the concept of \emph{local Hölder regularity} in order to make the mean Hölder exponent identifiable. Assuming that the mean function belongs to a set of functions with constant local Hölder regularity exponent, we propose a simple estimator of this regularity parameter when its value is less than 1. We derive non-asymptotic bounds for the exponent estimator, which can be next plugged into the expression of the optimal mean function estimator to obtain optimal \emph{data-driven} estimation and inference for the mean function. The \emph{regularity estimation} results are new, of independent interest in the context of adaptive nonparametric methods for random functions, and crucially exploit the nature of the functional data. The proofs of the most important results are given in the Appendix, while complements and more technical details are gathered in a Supplement. In that Supplement, we also propose a \emph{subsampling-}based method for constructing confidence regions that avoids computing the asymptotic variance of the Gaussian approximation.

% !TeX root = ../mean.tex
\section{The setup}\label{sec:problem}

Let $\mathcal T = [0,1]^D$, with $D\geq 1$, be the domain of the real-valued random function $X$. The random copies of $X$, denoted  $X_i$, $i=1,2,..,N,$ are referred to as trajectories or sample paths, and are supposed to be continuous.  It is also assumed that the mean and the covariance functions of $X$, denoted by $\mu(\cdot)$ and $\gamma(\cdot, \cdot)$ respectively, exist and are continuous. They are defined as  
\begin{equation}
	\mu(\mathbf t)= \EE [X(\mathbf t)] \quad \text{and} \quad \gamma(\mathbf s,\mathbf t)= \mathbb E\left[ \{X(\mathbf s) - \mu(\mathbf s)\}  \{X(\mathbf t)- \mu(\mathbf t)\} \right],\quad \text{ for any } \mathbf s,\mathbf t\in \mathcal T.
\end{equation}
The following notations will be used~:  $\mathcal T^2 = \mathcal T \times \mathcal T $; for any $a\in\mathbb R$, $\lfloor a\rfloor$ denotes the largest integer smaller than  $a$;  $\lesssim$ means that the left side is bounded by a positive constant times the right side;
for $a,b\in\RR$, $a \asymp b $ means  $a \lesssim b $ and  $b \lesssim a $; the vectors are column matrices and, for any vector $\mathbf a$, $\mathbf a^\top$ and $\|\mathbf a\|$ denote its transpose and the Euclidean norm, respectively; 
for any $\mathbf k=(k_1,\ldots,k_D)\in \mathbb N^D$,  $|\mathbf k |_1:=k_1+\dots + k_D$; for any continuous real-valued function $g$, $\|g\|_\infty$ denotes its uniform norm.  

In this section, we first describe the type of data used for estimation and inference. We next introduce Fourier partial sums and the associated de La Vallée Poussin sums, which improve on the approximation error of standard Fourier sums.

\subsection{Data}\label{sec:data}

For each $1\leq i \leq N$, let $M_i$ be a positive integer and $\mathbf  T_{i,m} \in \mathcal T$, $1\leq m\leq M_i$, be the observed design (domain) points for  $X_i$. The data associated with the trajectory $X_i$  consists of  the pairs  $(\Yim , \mathbf  T_{i,m}  ) \in\mathbb R \times \mathcal T $ where 
\begin{equation}\label{model_eq}
	\Yim = \Xp{i} (\mathbf T_{i,m} ) + \varepsilon_{i,m}, \quad \text{with}  \quad  \varepsilon_{i,m}= \sigma(\mathbf T_{i,m} ) {e}_{i,m}, 
	\quad  1\leq m \leq M_i,
\end{equation}
with $e_{i,m}$ independent copies of a centered variable $e$ with unit variance, and $\sigma(\cdot)$ is an unknown function that   accounts for possibly heteroscedastic measurement errors. The  design points $\mathbf T_{i,m}$ are  random copies of the vector $\mathbf  T\in \mathcal T$ which admits a density $f_{\mathbf  T}$ with respect to the Lebesgue measure. This is the so-called \emph{random design} framework. 
Let
\begin{equation}\label{def_tobs}
	\tobs =  \bigcup_{1\leq i \leq N}\Tobs{i} \qquad \text{ with } \qquad  \Tobs{i} = \{\mathbf T_{i,m}:   1\leq m \leq M_i\}.
\end{equation}  
Finally, the number of all data points $(\Yim , \mathbf T_{i,m} )$ used to estimate the mean $\mu(\cdot)$ is $\overline M=M_1+\cdots + M_N$. For simplicity,  $M_1,\ldots, M_N$ are considered non-random. 

The model equation \eqref{model_eq} can be rewritten as a nonparametric regression problem 
\begin{equation}\label{model-eq2}
\Yim = \mu( \mathbf  T_{i,m})+\{\Xp{i} - \mu\} (\mathbf  T_{i,m}) + \varepsilon_{i,m} := \mu(\mathbf  T_{i,m}) + \eta_{i,m}, \quad 1\leq i \leq N,  \; 1\leq m \leq M_i.
\end{equation} 
If  $e$, $\mathbf T$ and  $X$ are independent, the error term $\eta_{i,m}$  has the following  conditional covariance structure given the design points~:  
\begin{equation}\label{structure_dep}
\mathbb E\left[\eta_{i_1, m_1} \eta_{i_2, m_2}\mid \tobs \right]=
 \big\{ \gamma(\mathbf T_{i_1,m_1}, \mathbf T_{i_2,m_2})
 %\right.
 %\\
 %\left.
  + \delta_{ m_1, m_2} \sigma^2 (\mathbf T_{i_1,m_1}) \big\}
\delta_{ i_1, i_2},
\end{equation}
where $\delta_{ m_1, m_2} = 1$  if $ m_1= m_2$ and zero otherwise.

\subsection{\!\!Multivariate Fourier series and de La Vallée Poussin operator}\label{subsec:legendre}

We consider the Fourier basis on $[0,1]$ defined by the elements
\begin{equation}\label{fourier_sys} 
\phi_0(t) = 1, \quad   \phi_{2k-1}(t) = \sqrt{2} \sin(2\pi k t), \quad \phi_{2k}(t) = \sqrt{2} \cos(2\pi k t),   \quad t \in [0,1],\; k = 1, 2, \dots.
\end{equation}
The elements of the basis on $\mathcal T$ are defined as follows~: for  $\mathbf{k} = (k_1, k_2, \dots, k_D)\in \mathbb N^D$, 
$$
\phi_{\mathbf{k}}(\mathbf{t}) = \prod_{j=1}^D \phi_{k_j}(t_j), \quad \text{with }\;\;  \mathbf{t} = (t_1, \dots, t_D)\in \mathcal T=[0,1]^D.
$$
Let $\mathcal F _D$ be the closed linear span of the Fourier basis, that is
%\begin{equation}\label{F_B}
	$\mathcal F_D = \overline{\operatorname{Span}}\left\{ \phi_{\mathbf{k}}: \mathbf{k}\in \mathbb N ^D\right\}$.
%\end{equation}

\medskip

\begin{definition}\label{holder_def}
	Let the set $ \Sigma(\alpha, C;D) $, with $\alpha,C>0$, be the Hölder space of
	functions $g:\mathcal T \rightarrow \RR$ for which all order $\lfloor \alpha\rfloor$ partial derivatives exist and, for any $\boldsymbol{\alpha} \in \mathbb N^D$ with $|\boldsymbol \alpha |_1=\alpha_1+\alpha_2+\dots + \alpha_D = \lfloor \alpha\rfloor,$ it holds that
	\begin{equation}\label{unif_holdr_cdt}
		\left| \partial^{\boldsymbol  \alpha} g(\mathbf t)-\partial ^{\boldsymbol \alpha }g(\mathbf s) \right| \leq C \|\mathbf t- \mathbf s\|^{\alpha- \lfloor \alpha \rfloor}, \qquad \forall \mathbf t, \mathbf s \in \mathcal T ,
	\end{equation}
	where $\partial ^{\boldsymbol{\alpha}}= \partial _1^{\alpha_1}\partial _2^{\alpha_2} \dots \partial _D^{\alpha_D}  $ is the multi-index partial derivatives operator.
	%, and $\|\cdot\|$ denotes the Euclidean norm.
\end{definition}

\medskip

The  $L$-th partial sum of the Fourier series for the function $\mu$ is defined   as
\begin{equation}\label{triang_partial}
S_L(\mu, \mathbf{t}) = \sum_{|\mathbf{k}|_1 \leq 2 L} a_{\mathbf{k}} \phi_{\mathbf{k}}(\mathbf{t}), \qquad \text{with Fourier coefficients } \;\;\; a_{\mathbf{k}} = \int_{\mathcal{T}} \mu(\mathbf{t}) \phi_{\mathbf{k}}(\mathbf{t}) \, \mathrm{d}\mathbf{t}.
\end{equation}
 $ S_L(\mu, \cdot) $ is also called a \emph{triangular}   partial sum. Alternatively,  the \emph{rectangular}  $L$-th partial sum can be considered, obtained by replacing $ |\mathbf k|_1$ with $ |\mathbf k|_\infty=\max_{1\leq j \leq D}k_j$. Our results   can be easily adapted to use rectangular partial sums instead of the triangular ones. See \cite{nemeth2014} for a discussion of the two types of partial sums.

Next, we consider the \emph{de La Vallée Poussin} sum, defined as~:
\begin{equation}\label{def:poussin}
V_L(\mu, \mathbf{t}) = \frac{1}{L} \left\{ S_L(\mu, \mathbf{t}) + S_{L+1}(\mu, \mathbf{t}) + \dots + S_{2L-1}(\mu, \mathbf{t}) \right\},\quad \mathbf{t} \in \mathcal{T}.
\end{equation}
For a fixed integer $L$, the operators 
$
S_L: g \mapsto S_L(g, \cdot)$   and  
$V_L: g \mapsto V_L(g, \cdot)$
are well-defined on the space of continuous, real-valued functions defined on $\mathcal{T}$, defining a projection operator onto the linear subspace $\mathcal{V}_L = \operatorname{Span}(\phi_{\mathbf{k}} : |\mathbf{k}|_1 \leq 4L-2 )$. We denote by $\|\cdot\|_{\rm op}$ the operator norm associated with the uniform norm, that is
$$
\|S_L\|_{\rm op} = \sup_{\|g\|_\infty \leq 1} \|S_L(g, \cdot)\|_\infty, \qquad \|V_L\|_{\rm op} = \sup_{\|g\|_\infty \leq 1} \|V_L(g, \cdot)\|_\infty.
$$
For a projection $P$, the uniform approximation error $\|g - P(g)\|_\infty$ can be bounded using the operator norm as follows~:
\begin{equation}\label{dist_best}
\|g - P(g)\|_\infty \leq \| \text{id} - P \|_{\rm op} \operatorname{dist}(g, \mathcal{V}_L) \leq (1 + \|P\|_{\rm op}) \operatorname{dist}(g, \mathcal{V}_L),
\end{equation}
where $ \operatorname{dist}(g, \mathcal{V}_L) = \inf_{h\in \mathcal V_L} \|h-g \|_\infty$ is the distance between $g$ and the subspace $\mathcal V_L$.
It was shown that the exact orders of the operator norms of $S_L$ and $V_L$ are
$$
 \|S_L\|_{\rm op} \asymp (\log L)^D \quad \text{ and } \quad \|V_L\|_{\rm op} \asymp \left(\log2\right)^D.
$$ 
See \citet[Th. 2.1]{szili2009}, \citet[Th. 2.2]{nemeth2014}. These rates indicate that the \emph{de La Vallée Poussin} operator $V_L$ leads to more accurate approximations than $S_L$. 

\medskip

\begin{proposition}\label{reste-control}
	Assume $\mu \in \Sigma (\alphamu, C_\mu; D) \cap \mathcal F_D$ with $\alphamu$  and $C_\mu$ are two positive numbers. Then, a positive constant $\mathcal C_\mu$ exists, depending only on $\alpha_\mu$, such that 
	$$
	|V_L(\mathbf t)- \mu(\mathbf t) | \leq \mathcal C_\mu L^{- \alphamu},\qquad \forall \mathbf  t\in\mathcal T.
	$$
\end{proposition}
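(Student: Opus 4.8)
The plan is to split the error $\mu-V_L(\mu,\cdot)$ into an operator-norm factor times a best-approximation factor: control the first by the known uniform-in-$L$ boundedness of $\|V_L\|_{\rm op}$, and control the second by a Jackson-type estimate for Hölder functions. Concretely, I would first record the quasi-projection property of $V_L$: it reproduces every element of $\operatorname{Span}(\phi_{\mathbf k}:|\mathbf k|_1\le 2L)$, since by \eqref{triang_partial} one has $S_\ell(h,\cdot)=h$ for each such $h$ and every $\ell\in\{L,\dots,2L-1\}$, hence $V_L(h,\cdot)=h$ by \eqref{def:poussin}. Arguing as in \eqref{dist_best} (with $P=V_L$ and this reproduced span, which is contained in $\mathcal V_L$) then gives
$$
\|\mu-V_L(\mu,\cdot)\|_\infty\le\bigl(1+\|V_L\|_{\rm op}\bigr)\,\operatorname{dist}\bigl(\mu,\operatorname{Span}(\phi_{\mathbf k}:|\mathbf k|_1\le 2L)\bigr),
$$
and since $\|V_L\|_{\rm op}\asymp(\log 2)^D$ the prefactor is bounded, uniformly in $L$, by a constant depending only on $D$. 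It then remains to prove that the distance on the right is $\lesssim L^{-\alpha_\mu}$, with a constant depending on $\alpha_\mu$, $C_\mu$ and $D$.

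For that I would invoke the multivariate Jackson theorem for trigonometric approximation. Since $\phi_{2k-1}$ and $\phi_{2k}$ carry frequency $k$, the constraint $|\mathbf k|_1\le 2L$ describes a space containing all $1$-periodic trigonometric polynomials of total frequency degree of order $L$. Writing $\alpha_\mu=r+\beta$ with $r=\lfloor\alpha_\mu\rfloor$, the Hölder condition \eqref{unif_holdr_cdt} says the order-$r$ partial derivatives of $\mu$ are Hölder continuous with exponent $\beta$ and constant $C_\mu$, and the classical Jackson estimate then produces a best uniform approximation error of order $L^{-(r+\beta)}=L^{-\alpha_\mu}$. Combining with the display above yields $\|\mu-V_L(\mu,\cdot)\|_\infty\le\mathcal C_\mu L^{-\alpha_\mu}$, which is the claim ($V_L(\mathbf t)$ in the statement abbreviating $V_L(\mu,\mathbf t)$), with $\mathcal C_\mu$ depending only on $\alpha_\mu$ (and, through the fixed setup, on $C_\mu$ and $D$).

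The step I expect to require the most care is the applicability of the trigonometric Jackson theorem: it needs $\mu$, together with its partial derivatives up to order $r$, to be $1$-periodic — equivalently, to satisfy the Hölder bound on the torus — whereas Definition \ref{holder_def} only supplies the bound on the cube $\mathcal T=[0,1]^D$ with the Euclidean distance. This is precisely where the hypothesis $\mu\in\mathcal F_D$ does genuine work: belonging to the closed linear span of the Fourier basis forces $\mu$ to coincide with a continuous $1$-periodic function, and one must check that the cube-Hölder bound \eqref{unif_holdr_cdt} carries over, with a comparable constant, to the periodic extension, so that no boundary mismatch degrades the rate. A secondary, purely bookkeeping, matter is to reconcile the notion of polynomial "degree" appearing in whichever version of the multivariate Jackson theorem one cites (triangular/total degree, as used here, versus coordinatewise) with the index set defining $\mathcal V_L$, and to keep track that $V_L$ is a quasi-projection reproducing $\operatorname{Span}(\phi_{\mathbf k}:|\mathbf k|_1\le 2L)$ rather than a genuine projection onto all of $\mathcal V_L$.
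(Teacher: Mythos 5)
Your argument is essentially the paper's own: it bounds $\|\mu - V_L(\mu,\cdot)\|_\infty$ by $(1+\|V_L\|_{\rm op})$ times the best uniform approximation error over a span reproduced by $V_L$ (the Lebesgue-type inequality \eqref{dist_best}), then controls that distance by a multivariate Jackson estimate for Hölder functions, exactly as the paper does via \eqref{handcomb} and the cited references of Timan, Newman--Shapiro and Mason. Your added care about the quasi-projection property of $V_L$ and about periodicity of $\mu$ and its derivatives (which is where $\mu\in\mathcal F_D$ is used) is correct and, if anything, slightly more explicit than the paper's treatment.
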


\medskip

In the case $\alpha_\mu \in(0,1]$, Proposition \ref{reste-control} is a direct  consequence of \eqref{dist_best} combined with 
\begin{equation}\label{handcomb}
\operatorname{dist} (\mu, \mathcal V_L) \leq C \sum_{j=1}^{D} \omega_j \left ( \mu, (2L+1)^{-1}\right),
\end{equation}
where $C$ is some constant independent of $\mu$ and $L$, and   
$$ 
\omega_j(\mu, z) := \sup_{ | t_j - s_j|\leq z} | \mu( t_1, \dots, t_j, \dots t_D) -\mu( t_1, \dots, s_j, \dots t_D)  |,\quad 1\leq j\leq D, 
$$
is the partial modulus of continuity. The inequality \eqref{handcomb} extends to the case where the mean function admits partial derivatives. For the justification of \eqref{handcomb} and the extension to smoother functions, see \citet[Section 5.3.2]{timan_book},  \citet[equations (2) and (3) page 192]{scomb66},   \citet[equation (22)]{mason1980near}. For the univariate case ($D=1$), see \citet[Ch. 7]{devore1993constructive} and   \citet[Proposition 2.4.2]{efro2008}.

% !TeX root = ../mean.tex

\section{A new approach to estimating Fourier coefficients}\label{new_fo_coe}

The Fourier coefficients $a_{\mathbf k}$ are integrals that can be approximated using ideas from the Monte Carlo integration literature.  We consider the control neighbors approach, a linear integration rule studied by \cite{leluc2024speeding}. See also \cite{oates2017control}.

\subsection{Linear integration}

To describe the linear integration rule, let $\varphi:\mathcal T \rightarrow \mathbb R$ be a $\beta-$Hölder continuous function, $\beta \in(0,1]$, for which we want to approximate
\begin{equation}\label{def_Iphi}
	I(\varphi) = \int_{\mathcal T}\varphi(\mathbf t)  f_{\mathbf T}(\mathbf t) {\rm d}\mathbf t,
\end{equation}
using the values $\varphi(\mathbf T_l)$, $1\leq l \leq n$, with $\mathbf T_l$ random copies of a continuous variable $\mathbf T$ with density $f_{\mathbf T}$ with the support $\mathcal T$. Let $\widetilde \varphi$ be a generic approximation of $\varphi$, the so-called \emph{control variate}, such that $I(\widetilde \varphi)$ can be calculated, where $I(\widetilde \varphi)$ is the integral in \eqref{def_Iphi} with $\widetilde \varphi$ instead of $\varphi$. The estimate of $I(\varphi)$ is then defined as
\begin{equation}\label{def_Iphi_hat}
	\widehat {I }(\varphi)= I(\widetilde \varphi) + \frac{1}{n} \sum_{l=1}^n  \{\varphi(\mathbf T_l) - \widetilde \varphi(\mathbf T_l) \}.
\end{equation}
By construction, 
	$\mathbb E \big[ \widehat {I }(\varphi) \big] = I(\varphi)$  and   $\operatorname{Var} \big[ \widehat {I } (\varphi)\big] \lesssim \|\varphi - \widetilde \varphi\|^2_\infty\;  n^{-1}$.
With  suitable  $\widetilde \varphi$, the regularity of the integrand function $\varphi$ then contributes to the accuracy of   $\widehat I(\varphi)$, thus leading to  
\begin{equation}\label{fast_rateNN}
	\mathbb E \left( \big|\widehat {I } (\varphi)- I(\varphi)\big|^2\right)^{1/2} \lesssim  n^{-1/2 }n^{ - \beta/D}, 
\end{equation}
which is known to be optimal. See \cite{novak2016some}, \cite{BAKHVALOV2015502}. 

\cite{leluc2024speeding} propose a piecewise constant control variate function $\widetilde \varphi$ based on the leave-one-out $1-$nearest neighbor ($1-$NN$-$loo) estimates. In this case, the control variate function is $\widetilde \varphi^{(l)}$, as it depends on $l$, and \eqref{def_Iphi_hat} becomes
\begin{equation}\label{def_Iphi_hat2}
	\widehat {I }(\varphi)=  \frac{1}{n} \sum_{l=1}^n \left[\varphi(\mathbf T_l) +\{   \widetilde \varphi^{(l)}(\mathbf T_l) -I(\widetilde \varphi^{(l)})  \}\right].
\end{equation} 
An appealing feature of the integral approximation by control neighbors is that $\widehat {I}(\varphi) $ is a convex combination of the integrant values $\varphi(\mathbf T_l)$  with explicit weights depending only on the design points $\mathbf T_l$, $1\leq l \leq n$. More precisely,  the weights $w_{l}$ exist such that 
\begin{equation}\label{lin_rep1}
	\widehat {I } (\varphi)= \sum_{l=1}^n w_{l} \varphi(\mathbf T_l), \qquad \text{with} \quad \sum_{l=1}^n w_{l} =1.	
\end{equation}
The precise description of the  control neighbor and the weights $w_{l}$ in the linear representation \eqref{lin_rep1} is given in the following using few more definitions  from \cite{leluc2024speeding}.

\medskip

\begin{definition}[Leave-one-out neighbors, and Voronoi cells and volumes]\label{def_generic_NN}
Let $\mathfrak S=\{\mathbf T _l: 1\leq l \leq n \} \subset \mathcal T$
%[0,1]$ 
be a  set of points in the  domain $\mathcal T\subset \mathbb R^D$, $D\geq 1$. 
\begin{enumerate}

\item  The nearest neighbor of $\mathbf t\in \mathcal T$ among  $\mathfrak S\setminus \{\mathbf T_l\}$, called the \emph{leave-one-out nearest neighbor}, is 
$$
\widehat N(\mathbf t;\mathfrak S\setminus \{\mathbf T_l\})= \arg\min_{\mathbf s\in \mathfrak S \setminus \{S_l\}}  \|\mathbf t- \mathbf  s\|.
$$
When the $\arg\min$ is not unique, $\widehat N(\mathbf t;\cdot)$ is chosen by lexicographic order.

\item For $ \mathbf T_l \in \mathfrak S$, let
$\mathcal S_{j} (\mathfrak S\setminus \{\mathbf T_l\}) =  \{\mathbf t\in \mathcal T: \widehat N(\mathbf t;\mathfrak S\setminus \{\mathbf T_l\})= \mathbf T_j \} $ be the \emph{leave-one-out Voronoi cell}.

\item Assume that $\mathbf T_l$ are i.i.d. random variables admitting a density $f$ on $\mathcal T$. Then the volumes associated with the leave-one-out Voronoi cells are
$$
V_{j}(\mathfrak S \setminus \{\mathbf T_l\})  = \int_{\mathcal S_{j}(\mathfrak S\setminus \{\mathbf T_l\})} f(u) {\rm d}u
, \qquad 1\leq j \leq n.
$$
\end{enumerate}
\end{definition}

%\medskip

The $1-$NN$-$loo control variate function (also called a control neighbor) is defined by $ \widetilde \varphi ^{(l)}(\mathbf t)= \varphi(\widehat N(\mathbf t;\mathfrak S\setminus \{\mathbf T_l\}))$. Let $\mathbbm 1 \{\cdot\}$ denote the indicator function. 

\medskip

\begin{definition}[Degree and cumulative length]\label{def_generic_d}
Let $\mathfrak S $ be like in Definition \ref{def_generic_NN}.
The degree $\widehat d_j$  represents the number of times $\mathbf T_j$ is a nearest neighbor of a point $\mathbf T_l$ for $l\ne j$. The associated $j$-th cumulative Voronoi volume is denoted by $\widehat c_j$, that is  
\begin{equation}\label{def_c_et_d}
\widehat d_j =\widehat d_j(\mathfrak S)= \sum_{l:l\ne j}\mathbbm 1 \{\mathbf T_l \in \mathcal S_j(\mathfrak S\setminus \{ \mathbf T_l\})\} \quad \text{and }\quad \widehat c_j= \widehat c_j(\mathfrak S)= \sum_{l:l\ne j}V_j(\mathfrak S\setminus\{ \mathbf T_l\}).
\end{equation}
\end{definition}

\medskip

The weights in the representation \eqref{lin_rep1} are shown to be $w_l= (1+\widehat c_l - \widehat d_l)/n$ \citet[][Proposition 1]{leluc2024speeding}. Explicit formulae of $\widehat c_l $ and $\widehat d_l $ when $\mathcal T = [0,1]$ are provided in the Supplement. We now present some properties of $\widehat c_l $ and $\widehat d_l $.

\medskip

\begin{proposition}\label{lem_Vor_Index}
Let $\mathfrak S$ in Definition \ref{def_generic_NN} be the independent sample $\mathbf T_1,\ldots,\mathbf T_n$ of a random vector  $\mathbf T\in\mathcal T  $, and let $A (\cdot)$ be an integrable function of $\mathbf T$.   Then,  
$$
\mathbb{E} \left[ \widehat{d}_{l} \mid \mathbf T_l \right] = \mathbb{E}[ \widehat c_{l} \mid \mathbf T_l ],\qquad \mathbb{E} \left[A (\mathbf T_l)\widehat{d}_{l}\right] = \mathbb{E}[A (\mathbf T_l)\widehat c_{l}] \quad \text{ and } \quad \mathbb{E}  [ \widehat{d}_{l} ] = \mathbb{E}[ \widehat c_{l}]=1.
$$ 
\end{proposition}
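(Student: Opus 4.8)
The plan is to prove the three identities in turn, the conditional one being the crux: the second follows from it by integrating against $A(\mathbf T_l)$, and the third by a short symmetry computation. Write $\mathfrak S=\{\mathbf T_1,\dots,\mathbf T_n\}$, let $f$ be the density of $\mathbf T$ (the one used to define the Voronoi volumes), and, for a fixed index $m$, let $\mathcal G_m$ be the $\sigma$-algebra generated by the sample points other than the $m$-th. Two elementary remarks are used throughout. First, $0\le\widehat d_l\le n-1$ and $0\le\widehat c_l\le n-1$, since each indicator and each volume $V_l(\cdot)$ is at most $1$; hence all the (conditional) expectations below are finite, and $A(\mathbf T_l)\widehat d_l$, $A(\mathbf T_l)\widehat c_l$ are integrable whenever $A(\mathbf T)$ is. Second, for every fixed $m$ the leave-one-out Voronoi cells $\{\mathcal S_j(\mathfrak S\setminus\{\mathbf T_m\}):j\ne m\}$ form a measurable partition of $\mathcal T$, because the leave-one-out nearest-neighbour map $\widehat N(\,\cdot\,;\mathfrak S\setminus\{\mathbf T_m\})$ is everywhere single valued, the lexicographic convention resolving any ambiguity at equidistant configurations.

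For the conditional identity, write the two sums
\[
\widehat d_l=\sum_{m\ne l}\mathbbm 1\{\mathbf T_m\in\mathcal S_l(\mathfrak S\setminus\{\mathbf T_m\})\},\qquad \widehat c_l=\sum_{m\ne l}V_l(\mathfrak S\setminus\{\mathbf T_m\}),
\]
and compare them term by term. Fix $m\ne l$. The cell $\mathcal S_l(\mathfrak S\setminus\{\mathbf T_m\})$ is the Voronoi cell of $\mathbf T_l$ among the points $\{\mathbf T_k:k\ne m\}$, so it is $\mathcal G_m$-measurable and, crucially, does not involve $\mathbf T_m$. Since $\mathbf T_m$ is independent of $\mathcal G_m$ with density $f$,
\[
\mathbb E\bigl[\mathbbm 1\{\mathbf T_m\in\mathcal S_l(\mathfrak S\setminus\{\mathbf T_m\})\}\,\big|\,\mathcal G_m\bigr]=\int_{\mathcal S_l(\mathfrak S\setminus\{\mathbf T_m\})}f(u)\,\mathrm du=V_l(\mathfrak S\setminus\{\mathbf T_m\}).
\]
As $\sigma(\mathbf T_l)\subseteq\mathcal G_m$ (because $l\ne m$), the tower property turns this into $\mathbb E[\mathbbm 1\{\mathbf T_m\in\mathcal S_l(\mathfrak S\setminus\{\mathbf T_m\})\}\mid\mathbf T_l]=\mathbb E[V_l(\mathfrak S\setminus\{\mathbf T_m\})\mid\mathbf T_l]$; summing over $m\ne l$ gives $\mathbb E[\widehat d_l\mid\mathbf T_l]=\mathbb E[\widehat c_l\mid\mathbf T_l]$.

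The second identity is then immediate: $A(\mathbf T_l)$ is $\sigma(\mathbf T_l)$-measurable, so multiplying the last display by $A(\mathbf T_l)$, taking expectations and using the tower property once more yields $\mathbb E[A(\mathbf T_l)\widehat d_l]=\mathbb E[A(\mathbf T_l)\widehat c_l]$; taking $A\equiv 1$ shows in particular $\mathbb E[\widehat d_l]=\mathbb E[\widehat c_l]$. To identify this common value, fix $m\ne l$ and let $\mathbf T$ be an independent copy of $\mathbf T_1$; conditioning on $\mathfrak S$,
\[
\mathbb E\bigl[V_l(\mathfrak S\setminus\{\mathbf T_m\})\bigr]=\mathbb P\bigl(\mathbf T\in\mathcal S_l(\mathfrak S\setminus\{\mathbf T_m\})\bigr)=\mathbb P\bigl(\widehat N(\mathbf T;\mathfrak S\setminus\{\mathbf T_m\})=\mathbf T_l\bigr).
\]
The points $\{\mathbf T_k:k\ne m\}$ are i.i.d. and, almost surely, at pairwise distinct distances from $\mathbf T$, so each of them is the leave-one-out nearest neighbour of $\mathbf T$ with the same probability; since exactly one of them is, that probability equals $1/(n-1)$. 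Summing over the $n-1$ indices $m\ne l$ gives $\mathbb E[\widehat c_l]=1$, and therefore $\mathbb E[\widehat d_l]=1$ as well.

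I do not anticipate a genuine obstacle here; the proof is a careful bookkeeping of conditional independence. The one point requiring attention is the displayed conditional-expectation formula: one must confirm that deleting $\mathbf T_m$ from the sample really makes the cell $\mathcal S_l(\mathfrak S\setminus\{\mathbf T_m\})$ a function of $\mathcal G_m$ alone, so that, given $\mathcal G_m$, it is a fixed measurable region into which the independent point $\mathbf T_m$ falls with probability equal to its $f$-measure; one should also note that, since $\mathbf T$ has a density, ties occur with probability $0$, which is what makes the leave-one-out cells an honest partition and legitimises the ``each equally likely'' step. Both verifications are routine once written out.
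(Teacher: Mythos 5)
Your proof is correct and follows what is essentially the only natural route here (and the one the paper takes, following \cite{leluc2024speeding}): condition on $\mathcal G_m=\sigma(\mathbf T_k:k\ne m)$ so that the leave-one-out cell $\mathcal S_l(\mathfrak S\setminus\{\mathbf T_m\})$ becomes a fixed region and the indicator integrates to the Voronoi volume, then use the tower property and, for the value $1$, exchangeability of the $n-1$ retained points together with the almost-sure uniqueness of the nearest neighbour. The two technical points you flag --- measurability of the cell with respect to $\mathcal G_m$ and the null probability of ties --- are exactly the ones that need checking, and you handle both correctly.
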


%--------------------------

\subsection{The coefficients estimators}
Let $\mathbf k \in \NN^D$. To estimate the coefficients $a_{ \mathbf k} = \int_{\mathcal T}\mu( \mathbf t)\phi_{\mathbf k}(\mathbf t){\rm d}\mathbf t$, we aggregate all available data points. 
Then,   $\mathfrak S=\tobs$  in Definition \ref{def_generic_NN}  and a pair index $(i,m)$ corresponds to an index $j$ or $l$  in Definition \ref{def_generic_d}. The cardinal of the set of pairs  $(i,m)$ is $\overline M= \sum_{i=1}^N M_i$. Define
\begin{equation}\label{estimator:poolling}
\widehat a_{\mathbf k}= \sum_{i,m} \omega_{i,m}\frac{\Yim \phi_{\mathbf k}(\mathbf T_{i,m})}{f_{\mathbf T}(\mathbf T_{i,m})},\qquad \text{ where } \quad \omega_{i,m}= (1 +\widehat c_{i,m} - \widehat d_{i,m}  )/\overline M, 
\end{equation}
$ 1\leq m\leq M_i$, $1\leq i\leq N$, with  $\widehat c_{i,m} =\widehat c_{i,m}(\tobs)$ and $\widehat d_{i,m} =\widehat d_{i,m}(\tobs)$ defined as in \eqref{def_c_et_d}. 

\medskip

\begin{assumptionH}

	\item \label{assuH2} The $ \Tmi$'s are random copies of $\mathbf T\in \mathcal T$, and $\overline M :=\sum_{i=1}^N M_i \geq 4$.
	
	\item \label{assuH12}  $\mathbf T\in \mathbb R^D$ admits a density $f_{\mathbf T}$; constants $C_0, C_1 $ exist such that 
	$
	0< C_0 \leq  f_{\mathbf T} \leq C_1 .
	$

	\item \label{assuH3} The mean function $\mu$ belongs to the Hölder class $\Sigma (\alphamu, C_{\mu};D)$, for some $\alphamu, C_{\mu}>0$. 
\end{assumptionH}

\smallskip

To derive bounds for the quadratic risk of the $\widehat a_{\mathbf k}$, we decompose it as follows~:
\begin{equation}\label{estimator:decom}
	\widehat a_{\mathbf k}= \sum_{(i,m)} \omega_{i,m}\frac{\mu(\mathbf T_{i,m}) \phi_{\mathbf k}(\mathbf T_{i,m})}{f_{\mathbf T}(\mathbf T_{i,m})}+ \sum_{(i,m)} \omega_{i,m}\frac{\eta_{i,m} \phi_{ \mathbf k}(\mathbf T_{i,m})}{f_{\mathbf T}(\mathbf T_{i,m})}:= B_{ \mathbf k}+V_{ \mathbf k},
\end{equation}
with $\eta_{i,m}$ is defined in \eqref{model-eq2}. 
In the following result, a version of Theorem 1 in \cite{leluc2024speeding} proved in the Supplement, we show that $B_{\mathbf k} $ is  unbiased and we bound its risk.

\medskip

\begin{proposition}\label{variance-B}
	Under assumptions (H\ref{assuH2}), (H\ref{assuH12}) and (H\ref{assuH3}), we have $\mathbb E (B_{\mathbf k}) = a_{ \mathbf k}$ and
	\begin{equation}
		\EE\left[ (B_{\mathbf k}- a_{\mathbf k})^2\right] \leq K_{\rm NN-loo}^22^{D+1}\left\{C_\mu^2 \overline M^{\; -1 -2\alphamu/D} + (2\pi)^2  \|\mu \|_\infty^2 |\mathbf k|_1^2 \overline M^{\; -1 - 2/D} \right\},
	\end{equation}
	where $K_{\rm NN-loo}$ depends only on $C_0,C_1, D$ and $\alpha_\mu$. 
\end{proposition}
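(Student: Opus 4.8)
The plan is to apply the generic control-neighbors error bound of \cite{leluc2024speeding} (the version stated in the Supplement) to the specific integrand $\varphi_{\mathbf k}(\mathbf t) = \mu(\mathbf t)\phi_{\mathbf k}(\mathbf t)/f_{\mathbf T}(\mathbf t)$, and then control the resulting quantities using the regularity of $\mu$ and $f_{\mathbf T}$. First I would observe that, since the weights are $\omega_{i,m} = (1+\widehat c_{i,m}-\widehat d_{i,m})/\overline M$ and sum to $1$, the quantity $B_{\mathbf k}$ is exactly the linear integration estimate \eqref{lin_rep1} with $n = \overline M$ design points and integrand $\varphi_{\mathbf k}$; its unbiasedness, $\mathbb E(B_{\mathbf k}) = I(\varphi_{\mathbf k}) = a_{\mathbf k}$, follows from Proposition \ref{lem_Vor_Index} together with the representation $\widehat I(\varphi)= \sum_l w_l \varphi(\mathbf T_l)$, by taking conditional expectation given $\mathbf T_l$ (or directly from the cited construction). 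For the variance, the key structural input is that for a control-neighbor estimate the error is controlled by the \emph{local oscillation} of the integrand at the scale of the nearest-neighbor distances: the leave-one-out construction gives $\widehat I(\varphi) - I(\varphi) = \frac1n\sum_l \{\varphi(\mathbf T_l) - \widetilde\varphi^{(l)}(\mathbf T_l)\} + (\text{mean-zero terms involving } V_j)$, and each increment $\varphi(\mathbf T_l) - \varphi(\widehat N(\mathbf T_l;\cdot))$ is bounded by the Hölder modulus of $\varphi_{\mathbf k}$ times a nearest-neighbor distance.

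The main estimate is therefore a Hölder/Lipschitz bound on $\varphi_{\mathbf k}$. Writing $\varphi_{\mathbf k} = (\mu/f_{\mathbf T})\cdot \phi_{\mathbf k}$ and using (H\ref{assuH12}) ($C_0 \le f_{\mathbf T}\le C_1$) together with (H\ref{assuH3}), I would split the relevant modulus of continuity into two contributions, exactly matching the two terms in the claimed bound: a contribution from the regularity of $\mu$ itself, producing the factor $C_\mu^2 \overline M^{-1-2\alpha_\mu/D}$ after the nearest-neighbor distances are integrated (the $\overline M^{-1-2\alpha_\mu/D}$ scaling being the optimal rate \eqref{fast_rateNN} with $n=\overline M$, $\beta = \alpha_\mu\wedge 1$ — note that when $\alpha_\mu>1$ one uses the Lipschitz bound on $\mu$, which is why only $C_\mu$ and not a higher-order constant appears, with the $L^{-\alpha_\mu}$-type gain absorbed elsewhere); and a contribution from the oscillation of $\phi_{\mathbf k}$, whose gradient has norm of order $2\pi|\mathbf k|_1$ (each factor $\phi_{k_j}$ contributes a derivative bounded by $2\pi k_j$, and $\sum k_j = |\mathbf k|_1$), producing the factor $\|\mu\|_\infty^2 |\mathbf k|_1^2 \overline M^{-1-2/D}$. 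The constants $C_0, C_1, D, \alpha_\mu$ enter only through the universal control-neighbors constant $K_{\rm NN-loo}$ and the harmless conversion of the $f_{\mathbf T}$-weighted integral to Lebesgue measure; the factor $2^{D+1}$ comes from using $(a+b)^2 \le 2a^2+2b^2$ after splitting $\varphi_{\mathbf k}$ and from a dimension-dependent constant in bounding moments of nearest-neighbor distances by $\overline M^{-2/D}$ type quantities.

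The step I expect to be the main obstacle is the sharp moment bound on the leave-one-out nearest-neighbor distances — controlling $\mathbb E[\|\mathbf T_l - \widehat N(\mathbf T_l;\mathfrak S\setminus\{\mathbf T_l\})\|^{2\beta}]$ (and the cross terms coming from the $\widehat c_j$, $\widehat d_j$ via Proposition \ref{lem_Vor_Index}) by a constant times $\overline M^{-2\beta/D}$, uniformly over the class, using only $C_0\le f_{\mathbf T}\le C_1$. This is exactly where the density bounds are needed and where the dimension-dependent constant is generated; since this is precisely the content of the cited result of \cite{leluc2024speeding}, in the paper I would invoke their Theorem 1 (in the Supplement version) as a black box and merely carry out the integrand-specific bookkeeping above, keeping track of the explicit constants $C_\mu$, $\|\mu\|_\infty$, $|\mathbf k|_1$, and the powers of $\overline M$.
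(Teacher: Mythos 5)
Your proposal follows essentially the same route as the paper: Proposition \ref{variance-B} is proved there precisely by specializing a version of Theorem 1 of \cite{leluc2024speeding} (established in the Supplement) to the integrand $\mu\phi_{\mathbf k}/f_{\mathbf T}$, with unbiasedness coming from Proposition \ref{lem_Vor_Index} and the risk bound from splitting the local oscillation of the product into the $C_\mu$-contribution of $\mu$ and the $2\pi|\mathbf k|_1$-gradient contribution of $\phi_{\mathbf k}$ (the factors $2^{D+1}$ and the density bounds $C_0,C_1$ entering exactly as you describe, through $\|\phi_{\mathbf k}\|_\infty\le 2^{D/2}$, the inequality $(a+b)^2\le 2a^2+2b^2$, and the nearest-neighbor distance moments). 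The only loose point is your parenthetical handling of the case $\alpha_\mu>1$, which is vague, but that concerns the Supplement's version of the cited theorem rather than the structure of the argument.
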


\smallskip

In this paper we assume that $f_{\mathbf T}$ is given and, similarly to \citep[pp. 8]{RV2006}, `this is not merely a technical assumption, but essential' for our approach. In many practical situations it is realistic to assume that $f_{\mathbf T}$ is smoother than $\mu$, in which case our conclusions readily accommodate an estimated $f_{\mathbf T}$ instead of $f_{\mathbf T}$.  

Before bounding the quadratic risk of the stochastic part $V_{ \mathbf k}$ in the decomposition of $\widehat a_{ \mathbf k}$, let us introduce the following additional mild assumptions.

\medskip

\newpage
\begin{assumptionD} \nobreak	\item \label{assuD1}  The curves $X_i$ are independent sample paths of a  second order  process $X$ with mean function $\mu$ and covariance function $\gamma$. Moreover, $\int_{\mathcal T} \gamma (\mathbf t,\mathbf t){\rm d}\mathbf t<\infty$.
	
	\item \label{assuD2}  The heteroscedastic errors are $\varepsilon_{i,m}=\sigma(\mathbf T_{i,m})  e_{i,m}$ with $e_{i,m}$ independent copies of a centered variable $e$ with unit variance. 
	
	\item \label{assuD3}  The sample paths of $X$, the realizations of $\mathbf T$ and $e$ are mutually independent. 
	
	\item \label{assuD4} The density  $f_{\mathbf T}(\cdot)$ and the conditional  variance function $\sigma^2(\cdot)$ of the errors  are $\alpha_f-$Hölder  and $\alpha_\sigma-$Hölder continuous functions, respectively, for some $\alpha_f, \alpha_\sigma >0$.
	
	\item \label{assuD5} There exist $H_\gamma\in(0,1]$ and $L_\gamma>0$ such that, for any $\mathbf s\in\mathcal T$, the  function $\mathbf t\mapsto \gamma(\mathbf s,\mathbf t)$ is a $H_\gamma-$Hölder   continuous function with Hölder constant $L_\gamma$.

\end{assumptionD}

\medskip

Under the conditions (D\ref{assuD1}) to (D\ref{assuD3}), we have $\EE[ V_{ \mathbf k}]=0$ and
$$
\EE[ (\widehat a_{ \mathbf k}-a_{ \mathbf k})^2] = \EE[ (B_{ \mathbf k}- a_{ \mathbf k})^2] + \EE[ V_{ \mathbf k}^2].
$$
We can now derive the  risk bound of the stochastic part in the Fourier coefficient estimator.  

\medskip

\begin{proposition}\label{variance-V}
Assume that  (H\ref{assuH2}), (H\ref{assuH12}), and (D\ref{assuD1}) to (D\ref{assuD5}) hold true. 
Then,  $\forall \mathbf k\in \NN^D$,
\begin{multline}
\EE[V_{\mathbf k}^2] = \frac{\varrho(D)}{\overline M} \int_{\mathcal T} \frac{ \{\sigma^2(\mathbf t)+\gamma(\mathbf t,\mathbf t)\}\phi_{\mathbf k}^2(\mathbf t) }{f_{\mathbf T}(\mathbf t)}{\rm d}\mathbf t\times\left \{ 1  + o(1) \right \}
%	\\ +\frac{5}{2\overline M} \int_0^1\frac{ \Gamma(s,s)\phi_k^2(s)}{f_T(s)}{\rm d}s\times 
%\left\{ 1 + O\left(\overline M^{\; - \alpha_f}+\overline M^{\; - H} + k\overline M^{\; -1}\right)\right\}
\\
+\frac{ \sum_{i=1}^N M_i(M_i-1)}{\overline M(\overline M - 1)}\int_{\mathcal T^2} \gamma(\mathbf s, \mathbf t)\phi_{\mathbf k}(\mathbf s)\phi_{\mathbf k}(\mathbf t) {\rm d}\mathbf s{\rm d}\mathbf t \times\{ 1 + o(1)\}),
\end{multline}
where  $\varrho(D)$ is a constant depending only on the dimension $D$.
The constants in the $o(\cdot)$  quantities do not depend on $\mathbf k$. 
\end{proposition}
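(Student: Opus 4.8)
The plan is to condition on the pooled design $\tobs$, use the conditional covariance structure \eqref{structure_dep}, and then reduce the whole computation to the asymptotics of leave-one-out Voronoi statistics at one and at two generic design points. Conditionally on $\tobs$ the weights $\omega_{i,m}$ and the ratios $\phi_{\mathbf k}(\mathbf T_{i,m})/f_{\mathbf T}(\mathbf T_{i,m})$ are deterministic, so
\[
\EE[V_{\mathbf k}^2\mid\tobs] = \sum_{i_1,m_1}\sum_{i_2,m_2}\omega_{i_1,m_1}\omega_{i_2,m_2}\,\frac{\phi_{\mathbf k}(\mathbf T_{i_1,m_1})\phi_{\mathbf k}(\mathbf T_{i_2,m_2})}{f_{\mathbf T}(\mathbf T_{i_1,m_1})f_{\mathbf T}(\mathbf T_{i_2,m_2})}\,\EE[\eta_{i_1,m_1}\eta_{i_2,m_2}\mid\tobs].
\]
By \eqref{structure_dep} only the terms with $i_1=i_2$ are nonzero; among them the diagonal ones ($m_1=m_2$) carry the covariance $\gamma(\mathbf T_{i,m},\mathbf T_{i,m})+\sigma^2(\mathbf T_{i,m})$ and the off-diagonal ones ($m_1\neq m_2$) carry $\gamma(\mathbf T_{i,m_1},\mathbf T_{i,m_2})$. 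Substituting $\omega_{i,m}=(1+\widehat c_{i,m}-\widehat d_{i,m})/\overline M$, taking expectations, and using that the pooled design is exchangeable while $\widehat c_{\cdot},\widehat d_{\cdot}$ are symmetric functionals of it collapses the two parts into single generic-point expectations:
\begin{multline*}
\EE[V_{\mathbf k}^2] = \frac{1}{\overline M}\,\EE\!\big[(1+\widehat c - \widehat d)^2\,\Psi_{\mathbf k}(\mathbf T)\big]\\
+ \frac{\sum_{i=1}^N M_i(M_i-1)}{\overline M^{\,2}}\,\EE\!\big[(1+\widehat c_1 - \widehat d_1)(1+\widehat c_2-\widehat d_2)\,\Phi_{\mathbf k}(\mathbf T_1,\mathbf T_2)\big],
\end{multline*}
where $\Psi_{\mathbf k}(\mathbf t)=\phi_{\mathbf k}^2(\mathbf t)\{\gamma(\mathbf t,\mathbf t)+\sigma^2(\mathbf t)\}/f_{\mathbf T}^2(\mathbf t)$, $\Phi_{\mathbf k}(\mathbf s,\mathbf t)=\phi_{\mathbf k}(\mathbf s)\phi_{\mathbf k}(\mathbf t)\gamma(\mathbf s,\mathbf t)/\{f_{\mathbf T}(\mathbf s)f_{\mathbf T}(\mathbf t)\}$, $(\widehat c,\widehat d,\mathbf T)$ is a generic design point, and $(\widehat c_r,\widehat d_r,\mathbf T_r)$, $r=1,2$, two distinct ones.

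For the diagonal term I would use the asymptotic scale-invariance of the leave-one-out nearest-neighbour graph of i.i.d.\ points: conditionally on $\mathbf T=\mathbf t$, the pair $(\widehat c,\widehat d)$ converges in law to its analogue for a stationary Poisson process, whence $\EE[(1+\widehat c-\widehat d)^2\mid\mathbf T=\mathbf t]\to\varrho(D)$ for a.e.\ $\mathbf t$, with a limit free of $f_{\mathbf T}$ and of $\mathbf t$ (this localization is already behind Proposition \ref{variance-B}; the Hölder regularity (D\ref{assuD4}) of $f_{\mathbf T}$ is what lets one replace the local density by the constant $f_{\mathbf T}(\mathbf t)$ inside $\widehat c$, and (D\ref{assuD4})--(D\ref{assuD5}) quantify the error terms). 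Since $\Psi_{\mathbf k}$ is bounded uniformly in $\mathbf k$ — because $\|\phi_{\mathbf k}\|_\infty\le 2^{D/2}$, $f_{\mathbf T}\ge C_0$, and $\gamma,\sigma^2$ are continuous on the compact $\mathcal T$ — and $(1+\widehat c-\widehat d)^2$ satisfies uniform higher-moment bounds (as used in the proof of Proposition \ref{variance-B}), dominated convergence gives $\EE[(1+\widehat c-\widehat d)^2\Psi_{\mathbf k}(\mathbf T)]=\varrho(D)\int_{\mathcal T}\phi_{\mathbf k}^2(\mathbf t)\{\gamma(\mathbf t,\mathbf t)+\sigma^2(\mathbf t)\}/f_{\mathbf T}(\mathbf t)\,\mathrm d\mathbf t+o(1)$ with the $o(1)$ uniform in $\mathbf k$, which is the first term of the statement.

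For the off-diagonal term, $\mathbf T_1$ and $\mathbf T_2$ being independent with density $f_{\mathbf T}$, I expand $(1+\widehat c_1-\widehat d_1)(1+\widehat c_2-\widehat d_2)=1+(\widehat c_1-\widehat d_1)+(\widehat c_2-\widehat d_2)+(\widehat c_1-\widehat d_1)(\widehat c_2-\widehat d_2)$. The constant term yields $\EE[\Phi_{\mathbf k}(\mathbf T_1,\mathbf T_2)]=\int_{\mathcal T^2}\phi_{\mathbf k}(\mathbf s)\phi_{\mathbf k}(\mathbf t)\gamma(\mathbf s,\mathbf t)\,\mathrm d\mathbf s\,\mathrm d\mathbf t$, the target main term. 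For the two linear remainders, the identity of Proposition \ref{lem_Vor_Index} extends from conditioning on $\mathbf T_1$ alone to conditioning on the pair $(\mathbf T_1,\mathbf T_2)$ up to exactly one summand — the one in which $\mathbf T_2$ plays simultaneously the role of the conditioned point and of the candidate nearest neighbour of $\mathbf T_1$ — whose conditional mean is bounded by $1$ and tends to $0$ a.s.\ when $\mathbf T_1\neq\mathbf T_2$ (a fixed macroscopically distant point is the nearest neighbour of another only with vanishing probability, and leave-one-out Voronoi volumes shrink to $0$); dominated convergence makes each linear remainder $o(1)$, uniformly in $\mathbf k$. For the bilinear remainder I would invoke stabilization: $\widehat c_r-\widehat d_r$ is, up to negligible events, a functional of the point configuration in an $O_{\PP}(\overline M^{-1/D})$ neighbourhood of $\mathbf T_r$, so for $\mathbf T_1,\mathbf T_2$ at macroscopic distance the two deviations asymptotically decouple, each with conditional mean $o(1)$, and the higher-moment control provides the uniform integrability needed to conclude $\EE[(\widehat c_1-\widehat d_1)(\widehat c_2-\widehat d_2)\Phi_{\mathbf k}(\mathbf T_1,\mathbf T_2)]\to 0$. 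Collecting, the second generic-point expectation equals $\int_{\mathcal T^2}\gamma(\mathbf s,\mathbf t)\phi_{\mathbf k}(\mathbf s)\phi_{\mathbf k}(\mathbf t)\,\mathrm d\mathbf s\,\mathrm d\mathbf t+o(1)$, and since $\sum_i M_i(M_i-1)/\overline M^{\,2}=\{\sum_i M_i(M_i-1)/(\overline M(\overline M-1))\}(1+o(1))$ with a deterministic $1+o(1)$, this is the second term of the statement. Adding the two contributions proves the proposition; the main obstacle is the decoupling for the bilinear remainder — showing that leave-one-out Voronoi statistics at two macroscopically separated design points are asymptotically independent, uniformly enough to survive integration against the uniformly bounded but oscillating kernel $\Phi_{\mathbf k}$ — whereas the scale-invariance input and the single-bad-summand bound are close to what already supports Proposition \ref{variance-B}.
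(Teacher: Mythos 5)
Your proposal follows essentially the same route as the paper: the same conditional-covariance decomposition into a diagonal part (reduced to the second moment of $1+\widehat c-\widehat d$ at a generic point, which defines $\varrho(D)$) and a within-curve off-diagonal part normalized by $\overline M(\overline M-1)$. The only difference is that where the paper invokes its Lemma \ref{qar1} (proved in the Supplement) for the off-diagonal term, you sketch that lemma's proof directly via the expansion of $(1+\widehat c_1-\widehat d_1)(1+\widehat c_2-\widehat d_2)$ and a stabilization/decoupling argument — which is indeed the technical content deferred to the Supplement, and you correctly flag the bilinear decoupling as the step requiring the most care.
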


\medskip

For $D=1$, $\varrho(D)=5/2$ (see the Supplementary Material). For $D>1$, $\varrho(D)$ is determined by the maximum number of nearest neighbors of a given point in the sample of $\mathbf T_{i,m}$'s and the second order moments of the Voronoi volumes. Details are given in the proof of Proposition \ref{variance-V} in the Appendix.   Finally, note that if $  N M_i/\overline M  \asymp 1$, then
 $$
\frac{ \sum_{i=1}^N M_i(M_i-1)}{\overline M(\overline M - 1)} \asymp N^{-1}.
 $$ 
 
% \medskip
 
\begin{remark}
If the observation points $\{\mathbf T_{i,m}\}$ do not change across the trajectories $X_i$, and they are on a fixed, regular grid of, say, $\mathfrak m$ points, then the weights in \eqref{estimator:poolling} are all equal. For simplicity, let $D=1$.
The estimate of the $\mathbf k-$th Fourier coefficient then becomes 
$$
\overline{a}_{\mathbf k} = \frac{1}{N \mathfrak{m}} \sum_{i=1}^{N} \sum_{m=1}^{\mathfrak{m}} Y_{i,m} \phi_{\mathbf k}\left (\frac{m}{\mathfrak m}\right),
$$
and is no longer unbiased. See also the discussion in \citep[Ch. 4]{efro2008}.

\end{remark}

% !TeX root = ../mean.tex

\section{Mean function estimation and risk bounds}\label{sec:mean_fct_rate}

The estimator of the mean function $\mu$ is given by the sample version of the de La Vallée Poussin approximation $V_L(\mu, \cdot)$  in \eqref{def:poussin},
with the Fourier coefficients $\widehat a_{\mathbf k }$ are defined in \eqref{estimator:poolling}. With $\widehat S_L(\mu, \mathbf t)$ defined in  \eqref{triang_partial}, our mean function estimator is then
\begin{equation}\label{def:mean_est}
\widehat \mu(\mathbf t)=\widehat V_L (\mu;\mathbf t)= \frac{1}{L} \left\{ \widehat S_L(\mu, \mathbf t)+\widehat S_{L+1}(\mu, \mathbf t)+\dots+\widehat S_{2L-1}(\mu, \mathbf t) \right\},
\end{equation}
where $L$ is an integer to be chosen. 
Let  $B_{\mathbf k}$, $V_{ \mathbf k}$ be as  in \eqref{estimator:decom}, $\mathbf k \in \NN^D$. Then,
\begin{equation}\label{ineq_SM_P5}
%\EE\left\| \frac{1}{L} \sum_{j=0}^{L-1} \left\{\widehat S_{L+j}(\mu, \cdot) -S_{L+j}(\mu, \cdot)\right\} \right\|_2^2 
\EE\left\| \widehat  V_L (\mu;\cdot)-  V_L (\mu;\cdot)  \right\|_2^2 
	\leq  \frac{1}{L} \sum_{j=0}^{L-1} \sum_{|\mathbf k|_1\leq 2(L+j)}\left\{\EE\left[(B_{ \mathbf k} - a_{\mathbf k})^2\right] + \EE \left[V_{\mathbf k}^2\right] \right\}.
\end{equation}
Details for deriving \eqref{ineq_SM_P5} are given in the Supplement.  Here, $\|\cdot\|_2$ denotes the $L_2(\mathcal T)-$norm.  

\medskip

\begin{proposition}\label{mean:L2-norm} 
Under the assumptions of Propositions \ref{variance-B}  and \ref{variance-V}, we have 
\begin{equation}\label{bterms}
\frac{1}{L}\sum_{j=0}^{L-1} \sum_{| \mathbf k |_1\leq 2(L+j)}\EE\left[(B_{ \mathbf k} - a_{ \mathbf k})^2 \right] \lesssim   C_\mu^2   L^D\overline M^{\; -1-2\alphamu/D} +    \|\mu \|_\infty ^2   L^{D+2}\overline M^{\; -1 - 2/D},
\end{equation}
 with the constant in $\lesssim$ depending only on $D, f_{\mathbf T}  $ and $\alpha_\mu$,   and
\begin{multline}\label{Vterms}
	\frac{1}{L} \sum_{j=0}^{L-1} \sum_{| \mathbf k|_1\leq2(L+j)}\EE[V_{\mathbf k}^2] = \left\{K_1 \frac{\varrho(D)(2^{2D+1} -2^D) L^D}{(D+1)!\overline M} 
	+K_2\sum_{i=1}^N \frac{ M_i(M_i-1)}{\overline M(\overline M - 1)} \right\}\times \{ 1+o(1)\},
\end{multline}
where $\varrho(D)$ is the constant in Proposition \ref{variance-V}, %$\underline \alpha = \min\{\alpha_f,\alpha_\sigma, H\}$, 
%\begin{multline}
%\frac{1}{L} \sum_{j=0}^{L-1} \sum_{k=0}^{2(L+j)}\EE[V_k^2] =  \frac{5(L+2)}{2\overline M}K_1 \times 
%\left\{ 1 + O\left(\overline M^{\; -\underline \alpha}+ L\overline M^{\; -1}\right)\right\}\\
%+K_2\sum_{i\in\mathcal N_1} \frac{M_i( M_i - 8)}{\overline M^2} + \frac{20K_2}{\overline M} \times \left\{1+ o(1)\right\},\\
%\end{multline}
%where $\underline \alpha = \min\{\alpha_f,\alpha_\sigma, H\}$
\begin{equation}
K_1=\int_{\mathcal T }\frac{ \{\sigma^{2}(\mathbf s)+ \gamma(\mathbf s,\mathbf s)\}}{f_{\mathbf T}(\mathbf s)}{\rm d}\mathbf s \quad \text{and} \quad  K_2=\int_{\mathcal T} \gamma(\mathbf s,\mathbf s){\rm d}\mathbf s.
\end{equation}
The constants in  the   $o(\cdot)$ quantity depend on $D$, $\sigma^2 $, $\gamma $, $f_{\mathbf T} $, but not on $\mu$ or $L$. 
\end{proposition}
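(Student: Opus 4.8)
Both displays come from inserting the per‑coefficient estimates of Proposition~\ref{variance-B} (for the bias part $B_{\mathbf k}$) and Proposition~\ref{variance-V} (for the stochastic part $V_{\mathbf k}$) into the double sum and then doing an elementary lattice‑point count plus two short analytic reductions. Write $n_j:=2(L+j)$, so $2L\le n_j\le 4L-2$ for $0\le j\le L-1$, and recall $\#\{\mathbf k\in\NN^D:|\mathbf k|_1\le n\}=\binom{n+D}{D}=n^D/D!+O(n^{D-1})$. For \eqref{bterms} I would simply sum the bound of Proposition~\ref{variance-B}: its first term gives $C_\mu^2\overline M^{-1-2\alphamu/D}\,L^{-1}\sum_{j}\binom{n_j+D}{D}\lesssim C_\mu^2 L^D\overline M^{-1-2\alphamu/D}$ since $n_j\le 4L$, and its second term gives $\|\mu\|_\infty^2\overline M^{-1-2/D}\,L^{-1}\sum_{j}\sum_{|\mathbf k|_1\le n_j}|\mathbf k|_1^2$ with $\sum_{|\mathbf k|_1\le n_j}|\mathbf k|_1^2\le n_j^2\binom{n_j+D}{D}\lesssim L^{D+2}$; averaging over $j$ preserves the orders. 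The constants absorbed into $\lesssim$ are $2^{D+1}$, $(2\pi)^2$ and $K_{\rm NN-loo}$, hence depend only on $D$, $\alpha_\mu$ and the bounds $C_0,C_1$ on $f_{\mathbf T}$, as claimed.

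For \eqref{Vterms} I would first exchange the order of summation, $L^{-1}\sum_{j=0}^{L-1}\sum_{|\mathbf k|_1\le n_j}(\cdot)=\sum_{\mathbf k\in\NN^D}w_{\mathbf k}(\cdot)$, introducing the frequency‑domain de La Vall\'ee Poussin weights $w_{\mathbf k}:=L^{-1}\#\{0\le j\le L-1:\,|\mathbf k|_1\le n_j\}$: thus $w_{\mathbf k}=1$ for $|\mathbf k|_1\le 2L$, $w_{\mathbf k}$ decreases affinely with slope $-(2L)^{-1}$ for $2L<|\mathbf k|_1\le 4L-2$, and $w_{\mathbf k}=0$ beyond; in particular $0\le w_{\mathbf k}\le 1$, $w_{\mathbf k}\to 1$ for each fixed $\mathbf k$ as $L\to\infty$, and $|w_{\mathbf k}-w_{\mathbf k'}|\le|\,|\mathbf k|_1-|\mathbf k'|_1\,|/(2L)$. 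Plugging Proposition~\ref{variance-V} in, and using that its two $o(1)$ factors are uniform in $\mathbf k$ and multiply nonnegative quantities, the task reduces to evaluating up to $o(\cdot)$ the sums $\Sigma_1:=\sum_{\mathbf k}w_{\mathbf k}\int_{\mathcal T}\{\sigma^2+\gamma(\cdot,\cdot)\}\phi_{\mathbf k}^2/f_{\mathbf T}$ and $\Sigma_2:=\sum_{\mathbf k}w_{\mathbf k}\int_{\mathcal T^2}\gamma\,\phi_{\mathbf k}\otimes\phi_{\mathbf k}$. For $\Sigma_2$ I would argue by positivity: since $\gamma$ is a continuous covariance, $\int_{\mathcal T^2}\gamma\,\phi_{\mathbf k}\otimes\phi_{\mathbf k}=\EE[(\int_{\mathcal T}(X-\mu)\phi_{\mathbf k})^2]\ge 0$, and Mercer's theorem together with Parseval's identity for the complete orthonormal system $\{\phi_{\mathbf k}\}$ of $L^2(\mathcal T)$ gives $\sum_{\mathbf k}\int_{\mathcal T^2}\gamma\,\phi_{\mathbf k}\otimes\phi_{\mathbf k}=\int_{\mathcal T}\gamma(\mathbf t,\mathbf t)\,\mathrm d\mathbf t=K_2<\infty$ under (D\ref{assuD1}); dominated convergence then yields $\Sigma_2\to K_2$, and multiplying by the $\mathbf k$‑free prefactor $\sum_i M_i(M_i-1)/\{\overline M(\overline M-1)\}$ produces the second term of \eqref{Vterms}.

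For $\Sigma_1$ I would write $\Sigma_1=\int_{\mathcal T}(g/f_{\mathbf T})(\mathbf t)\,\mathcal K_L(\mathbf t)\,\mathrm d\mathbf t$ with $g=\sigma^2+\gamma(\cdot,\cdot)$ and $\mathcal K_L=\sum_{\mathbf k}w_{\mathbf k}\phi_{\mathbf k}^2$, and expand $\phi_{\mathbf k}^2=\prod_{j=1}^D(1+\psi_{k_j})=1+\sum_{\emptyset\ne S\subseteq\{1,\dots,D\}}\prod_{j\in S}\psi_{k_j}$, where on $[0,1]$ one has $\psi_0\equiv 0$ and $\psi_{2\ell-1}(u)=-\cos(4\pi\ell u)=-\psi_{2\ell}(u)$. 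The constant term of the expansion contributes $(\sum_{\mathbf k}w_{\mathbf k})\int_{\mathcal T}g/f_{\mathbf T}=K_1\sum_{\mathbf k}w_{\mathbf k}$, and $\sum_{\mathbf k}w_{\mathbf k}=L^{-1}\sum_{j}\binom{n_j+D}{D}=\frac{2^D}{D!\,L}\sum_{j=0}^{L-1}(L+j)^D+O(L^{D-1})=\frac{2^{2D+1}-2^D}{(D+1)!}\,L^D(1+o(1))$ by comparing $\sum_{j=0}^{L-1}(L+j)^D$ with $\int_L^{2L}x^D\,\mathrm dx$. Each nonempty‑$S$ term, after pairing $k_j=2\ell_j-1$ with $k_j=2\ell_j$ (which cancels the $\psi$'s up to the variation of $w_{\mathbf k}$ in $|\mathbf k|_1$, of order $L^{-1}$) and integrating against the H\"older function $g/f_{\mathbf T}$ (guaranteed by (H\ref{assuH12}), (D\ref{assuD4}), (D\ref{assuD5})), is $o(L^D)$; hence $\Sigma_1=K_1\frac{2^{2D+1}-2^D}{(D+1)!}L^D(1+o(1))$, and multiplication by $\varrho(D)/\overline M$ yields the first term of \eqref{Vterms}. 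Since both contributions are nonnegative, they combine into the factored form $\{\cdots\}(1+o(1))$ in \eqref{Vterms}.

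The main obstacle is the last estimate: controlling the oscillatory $S\neq\emptyset$ contributions to the diagonal de La Vall\'ee Poussin kernel $\mathcal K_L$. For $D=1$ this is immediate, the pairing producing $\sum_\ell(w_{2\ell}-w_{2\ell-1})\cos(4\pi\ell\cdot)$ with $\sum_\ell|w_{2\ell}-w_{2\ell-1}|=O(1)$, so the contribution is $O(1)=o(L)$. For $D>1$ the weight $w_{\mathbf k}$ couples the coordinates through $|\mathbf k|_1$, so one must first sum out the coordinates not in $S$ — which replaces $w$ by a smoother but polynomially growing weight — and only then exploit the pairing cancellation (equivalently, an $|S|$‑fold finite difference of that weight, essentially supported near the two break‑points of $w$) against the $O(L^{D-1})$‑sized simplex shells, optionally invoking the H\"older decay of the Fourier coefficients of $g/f_{\mathbf T}$ to tame the shell sums. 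This is a routine but slightly delicate kernel computation, in the spirit of the classical bounds for Fej\'er/de La Vall\'ee Poussin kernels, and is where I would concentrate the effort.
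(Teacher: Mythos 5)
Your proposal is correct, and for \eqref{bterms} and the first (pooled-variance) term of \eqref{Vterms} it follows essentially the paper's route: the paper also just sums the bound of Proposition \ref{variance-B} over the simplices, and for the $K_1$-term it relies on the pointwise kernel identity $L^{-1}\sum_{j=0}^{L-1}\sum_{|\mathbf k|_1\le 2(L+j)}\phi_{\mathbf k}^2(\mathbf t)=\tfrac{2^{2D+1}-2^D}{(D+1)!}L^D+O(L^{D-1})$, whose proof (deferred to the Supplement) is exactly your decomposition of $\phi_{\mathbf k}^2$ into a constant plus oscillatory products and the sine/cosine pairing cancellation. On the step you flag as the main obstacle ($D>1$ oscillatory terms), your strategy does close: summing out the coordinates not in $S$ replaces $w$ by $W(m)=\sum_{r\ge0}\binom{r+D-s-1}{D-s-1}w(m+r)$, whose unit increments are $O(L^{D-s-1})$ because $w$ changes by at most $(2L)^{-1}$ per step over a range of length $O(L)$; a single telescoping in one coordinate of $S$ then bounds each alternating $\boldsymbol\epsilon$-sum by $O(L^{D-s-1})$, and summing over the $O(L^s)$ frequency vectors and using only $\|g/f_{\mathbf T}\|_\infty<\infty$ already gives $O(L^{D-1})=o(L^D)$ — no Hölder decay of Fourier coefficients is needed.

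Where you genuinely diverge is the $K_2$-term. The paper recognizes $L^{-1}\sum_j\sum_{|\mathbf k|_1\le 2(L+j)}\int\gamma(\mathbf s,\mathbf t)\phi_{\mathbf k}(\mathbf s)\,\mathrm d\mathbf s\,\phi_{\mathbf k}(\mathbf t)$ as the de La Vallée Poussin sum $V_L(\gamma(\cdot,\mathbf t);\mathbf t)$ and invokes Proposition \ref{reste-control} together with (D\ref{assuD5}) to get $K_2\{1+O(L^{-H_\gamma})\}$, i.e.\ an explicit rate. You instead use nonnegativity of $c_{\mathbf k}=\EE[\langle X-\mu,\phi_{\mathbf k}\rangle^2]$, Parseval/Tonelli to get $\sum_{\mathbf k}c_{\mathbf k}=K_2<\infty$ under (D\ref{assuD1}), and dominated convergence with $0\le w_{\mathbf k}\le1$, $w_{\mathbf k}\to1$. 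This is more elementary and needs neither (D\ref{assuD5}) nor the (tacit) requirement that $\gamma(\cdot,\mathbf t)$ lie in $\mathcal F_D$ for Proposition \ref{reste-control} to apply; the price is that you obtain $o(1)$ with no rate, which is all the statement claims but less than what the paper's argument delivers. Both routes combine the two nonnegative contributions into the factored $\{1+o(1)\}$ form in the same way.
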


\medskip

For $\mathbf t\in \mathcal T$, we can write 
\begin{equation}
 \widehat \mu(\mathbf t) - \mu(\mathbf t) 
 % =   \left[\widehat V_L(\mu; \mathbf{t})- V_L(\mu, \mathbf{t}) \right]    + \left[V_L(\mu; \mathbf{t})- \mu(\mathbf t) \right] \\   
 = \frac{1}{L} \sum_{j=0}^{L-1} \sum_{|\mathbf k|_1\leq 2(L+j)}\left\{  (B_{ \mathbf k} - a_{\mathbf k})  +   V_{\mathbf k}  \right\}  + \left[V_L(\mu; \mathbf{t})- \mu(\mathbf t) \right] .
\end{equation}
By Proposition \ref{mean:L2-norm}, we then deduce that, 
\begin{multline}\label{risk_bound_v}
	\EE\left\| \widehat   \mu -   \mu  \right\|_2^2 = \EE\left\| \widehat  V_L (\mu;\cdot)-  V_L (\mu;\cdot)  \right\|_2^2 +  \left\|V_L(\mu, \cdot)- \mu  \right\|_2^2 \\
	\leq  \left\{K_1 \frac{\varrho(D)(2^{2D+1}\!-2^D) L^D}{(D+1)!\overline M}  
	+ K_2\sum_{i=1}^N \frac{ M_i(M_i-1)}{\overline M(\overline M - 1)} \right\}\{1+o(1)\}	\\ +   \left\{  \frac{C_\mu}{\overline M^{\alphamu/D}} + \|\mu \|_\infty\frac{ L}{\overline M^{ 1/D}} \right\}^2  \frac{L^{D}}{\overline M} O(1) +  \left\|V_L(\mu, \cdot)- \mu  \right\|_2^2,
\end{multline}
and the constants in $O(1)$ and  $o(1)$ do not depend   on $\mu$ or  $L$.   With regard to the squared bias, by Proposition \ref{reste-control} we have   $\|V_L(\mu; \cdot)- \mu   \|_2^2\lesssim L^{-2\alphamu}$, provided $\mu\in\Sigma(\alphamu,C_\mu;D)\cap \mathcal F_D$. Thus the optimal choice of $L$ minimizing the  risk bound is 
\begin{equation}\label{L_general}
\hspace{-0.3cm} L^* \!= \!\left\lfloor C^* \overline M^{\; 1/(2\alphamu + D)}\right\rfloor
\quad \text{with}  \quad C^*=\left\{\frac{(2\alphamu \mathcal C_{\mu})(D+1)!}{DK_1 \varrho(D) (2^{2D+1}-2^D)} \right\}^{\! 1/(2\alphamu + D)}\!,
\end{equation}
and the corresponding optimal mean function estimator are
$ \widehat \mu^*(\cdot) = \widehat V_{L^*} (\mu; \cdot )$.

\medskip

\begin{corollary}\label{choice_L_v}
	If the conditions of Proposition \ref{mean:L2-norm} hold and  $\mu \in\Sigma(\alphamu,C_\mu;D)\cap\mathcal F_D$, then constants $C_1,C_2$ (not depending on $\mu$) exist such that, with the $o(1)$ from Proposition \ref{mean:L2-norm}, 
	\begin{equation}
		\EE\|\widehat \mu^*-\mu\|_2^2 \leq \left\{   \!\left[\mathfrak C_\mu \!+ \frac{C_1 C_\mu^2  }{\overline M^{\;\frac{2\alphamu}{D  }} } + \frac{C_2 \|\mu \|_\infty^2 }{\overline M^{\;\frac{4\alphamu}{D (2\alphamu+D)}} }\right] \!  \overline M^{\;\;-\frac{2\alphamu}{2\alphamu+D}} + K_2\sum_{i=1}^N \frac{ M_i(M_i-1)}{\overline M(\overline M - 1)}\right\}
		\times\{1+o(1)\} ,
	\end{equation}
	where $ \widehat \mu^*(\cdot) = \widehat V_{L^*} (\mu; \cdot )$ with $L^*$ in \eqref{L_general} and, with $\mathcal C_{\mu}$ the constant in Proposition \ref{reste-control}, 
	$$
	\mathfrak C_\mu=  \left\{\frac{2^{2D+1}-2^D}{(D+1)!}\right\}^{\frac{2\alphamu}{2\alphamu+D}}\left(K_1\varrho(D)\left\{\frac{2\alphamu \mathcal C_{\mu}}{ DK_1 \varrho(D)}\right\}^{\frac{D}{2\alphamu+D}}+C_{\mu} \left\{\frac{ DK_1 \varrho(D)}{2\alphamu \mathcal C_{\mu} }\right\}^{\frac{2\alphamu}{2\alphamu+D}}\right).
	$$
\end{corollary}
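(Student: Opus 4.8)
The plan is to substitute the balanced resolution level $L^*$ of \eqref{L_general} into the non-asymptotic risk bound \eqref{risk_bound_v} and then to reorganise the constants; because of the $o(1)$ and $O(1)$ terms present, the statement is understood in the regime $\overline M\to\infty$, so that $L^*\to\infty$ and the $o(1)$ of Proposition \ref{mean:L2-norm} is active. First I would record the squared-bias estimate: since $\mathcal T$ has Lebesgue measure one and $\mu\in\Sigma(\alphamu,C_\mu;D)\cap\mathcal F_D$, Proposition \ref{reste-control} gives $\|V_L(\mu,\cdot)-\mu\|_2^2\le\|V_L(\mu,\cdot)-\mu\|_\infty^2\lesssim L^{-2\alphamu}$ for every integer $L$.

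Feeding this into \eqref{risk_bound_v}, the right-hand side splits into three pieces: (i) the leading stochastic term $K_1\varrho(D)(2^{2D+1}-2^D)(L^*)^D/\{(D+1)!\,\overline M\}$ together with the design-dependent sum $K_2\sum_{i=1}^N M_i(M_i-1)/\{\overline M(\overline M-1)\}$, both carrying a factor $1+o(1)$; (ii) the $B_{\mathbf k}$-bias remainder $\{C_\mu\overline M^{-\alphamu/D}+\|\mu\|_\infty L^*\overline M^{-1/D}\}^2(L^*)^D\overline M^{-1}$ times an $O(1)$ that does not depend on $\mu$ or $L$; and (iii) the squared bias, of order $(L^*)^{-2\alphamu}$. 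Setting $\ell:=C^*\overline M^{1/(2\alphamu+D)}$ one has $\ell-1<L^*\le\ell$, whence $(L^*)^D\le\ell^D$ and $(L^*)^{-2\alphamu}\le\ell^{-2\alphamu}\{1+o(1)\}$; so in every occurrence above one may replace $L^*$ by $\ell$ at the cost of a $1+o(1)$ factor.

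Since $\ell^D=(C^*)^D\overline M^{D/(2\alphamu+D)}$, $\ell^{-2\alphamu}=(C^*)^{-2\alphamu}\overline M^{-2\alphamu/(2\alphamu+D)}$ and $D/(2\alphamu+D)-1=-2\alphamu/(2\alphamu+D)$, the leading stochastic term in (i) and the squared bias in (iii) both become a constant multiple of $\overline M^{-2\alphamu/(2\alphamu+D)}$. Inserting the explicit value of $C^*$ from \eqref{L_general} into the sum of these two constants, the two opposite powers of $C^*$ recombine and the sum collapses to exactly $\mathfrak C_\mu$: this is the algebraic identity underlying the definition of $\mathfrak C_\mu$, and it expresses that $C^*$ equilibrates the variance and squared-bias contributions. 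For the remainder (ii), using $(a+b)^2\le2a^2+2b^2$ and $L^*\le\ell$, it splits into a multiple of $C_\mu^2\,\overline M^{-2\alphamu/D}\,\overline M^{-2\alphamu/(2\alphamu+D)}$ and a multiple of $\|\mu\|_\infty^2\,\overline M^{-4\alphamu/(D(2\alphamu+D))}\,\overline M^{-2\alphamu/(2\alphamu+D)}$ (these exponents arise from $D/(2\alphamu+D)-1-2\alphamu/D$ and $(D+2)/(2\alphamu+D)-1-2/D$ respectively), with constants $C_1,C_2$ depending only on $D$, $C^*$ and the $O(1)$-constant of \eqref{risk_bound_v}, hence not on $\mu$. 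Collecting (i)--(iii), factoring out the common $\overline M^{-2\alphamu/(2\alphamu+D)}$ and keeping the $K_2$-sum unchanged gives the claimed inequality.

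The computation presents no conceptual obstacle: everything analytic is already contained in Propositions \ref{reste-control} and \ref{mean:L2-norm} and in \eqref{risk_bound_v}. The two points needing care are the exact identity $\{$leading-variance constant$\}(C^*)^D+\{$bias constant$\}(C^*)^{-2\alphamu}=\mathfrak C_\mu$ after the $C^*$ substitution, and checking that the $1+o(1)$ produced by the floor operation together with the $o(1),O(1)$ inherited from \eqref{risk_bound_v} --- all independent of $\mu$ and $L$ --- can be absorbed into a single outer factor $\{1+o(1)\}$, which is legitimate precisely because the $C_1,C_2$ corrections are of strictly smaller order than $\mathfrak C_\mu$ as $\overline M\to\infty$.
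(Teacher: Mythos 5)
Your proposal is correct and follows the route the paper intends: substitute $L^*$ from \eqref{L_general} into \eqref{risk_bound_v}, bound the squared bias via Proposition \ref{reste-control}, absorb the floor operation into a $1+o(1)$ factor, and verify that the exponents $D/(2\alphamu+D)-1-2\alphamu/D$ and $(D+2)/(2\alphamu+D)-1-2/D$ reproduce the two correction terms, all of which checks out. The only caveat is your claim that the variance and bias constants collapse ``exactly'' to $\mathfrak C_\mu$: with $\|V_{L^*}(\mu,\cdot)-\mu\|_2^2\leq \mathcal C_\mu^2 (L^*)^{-2\alphamu}$ the bias contribution is $\mathcal C_\mu^2(C^*)^{-2\alphamu}$, whereas the displayed $\mathfrak C_\mu$ carries the factor $C_\mu$ in its second term, a constant-bookkeeping mismatch on the paper's side that does not affect the structure or validity of your argument.
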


\medskip
 
\begin{remark}
Two regimes for the rate of the risk of the optimal  estimator $ \widehat \mu^*$ can be distinguished~: the \emph{sparse regime} [resp. \emph{dense regime}] corresponding to 
\begin{equation}\label{sparse_dense_reg}
\left(\frac{1}{\overline{M}}\right)^{\! -\; \frac{2\alphamu} {2\alphamu+D}}  \gg  \sum_{i=1}^N \frac{ M_i(M_i-1)}{\overline M(\overline M - 1)} \qquad \Bigg[\text{resp.  }
\left(\frac{1}{\overline{M}}\right)^{\! -\; \frac{2\alphamu} {2\alphamu+D}} \ll  \sum_{i=1}^N \frac{ M_i(M_i-1)}{\overline M(\overline M - 1)}\Bigg].
\end{equation}
It is worth noting that the sparse and dense regimes are not only determined by $M_1,\ldots,M_N$ and $N$, but also by the regularity of the mean function. 
\end{remark}

\medskip

\begin{remark}
	The term in \eqref{bterms} converges to zero faster than the term in \eqref{Vterms}. This very convenient fact is due to our method of estimating the Fourier coefficients. The constant $K_1$ is equal to the so-called \emph{the coefficient of difficulty} in the literature, see  \citet[Ch. 4]{Ef1999} for the case $D=1$, which allows to assess the complexity of a nonparametric regression problem. A remarkable aspect is that the fast convergence in \eqref{bterms} does not make $K_1$ depend on the $L_2-$norm of $\mu$, as is the case when the Fourier coefficients are estimated by simple sample means. This simplifies the choice of the optimal $L$. The construction of simple Fourier coefficients estimates, leading to a constant $K_1$ equal to the coefficient of difficulty, also seems to be a novelty in the problem of multivariate nonparametric heteroscedastic regression with random design, \emph{i.e.,} when $M_i=1$, $\forall  i $. 
\end{remark}

%\medskip

\begin{remark}
The rate of $\widehat \mu^*$ is optimal   over a large class of functions. More precisely, on the one hand, the estimation of the mean function can be formulated as an isotropic nonparametric regression problem, see \eqref{model-eq2}. By Corollary \ref{choice_L_v}, our $\widehat \mu^*$ achieves the minimax rate in the sparse regime   over increasing sub-classes of functions from $\Sigma(\alphamu,C_\mu;D)\cap\mathcal F_D$ with the uniform norm negligible compared to $\overline M^{\;2\alphamu/\{D (2\alphamu+D)\}}$.  Next, if $NM_i/\overline{M} \asymp 1$, the risk bound in the dense case has the parametric rate $N^{-1}$, the rate of the sample mean when all the trajectories $X_i$ are noiseless and observed at any domain point. See also \cite{CY2011}. 
\end{remark}

% !TeX root = ../mean.tex

\section{Fourier coefficients inference}\label{sec:infer}

The following notation is used below~: for any $J\in\mathbb N$, let
\begin{equation}\label{J_bar}
	\overline J :=\operatorname{card}\{\mathbf k\in  \mathbb N^D: |\mathbf k|_1\leq  J\}\lesssim \max \{1,J^{D}\}.
\end{equation}
The constant in the last inequality depends only on $D$. Moreover, 
for any $J\in\NN$, $(v_{\mathbf k})^\top_{|\mathbf k|_1\leq J}\in \mathbb R^{\overline J}$ denotes the column matrix filled from top to bottom with the values $v_{\mathbf k}$, $\mathbf{k} = (k_1, k_2, \dots, k_D)$, considered in lexicographic order. 

We aim to construct inference for the pointwise error and the uniform error, that are
\begin{equation}\label{Deltas}
\Delta (\mathbf t;L) =  \widehat \mu(\mathbf t;L )- V_{L} (\mu; \mathbf t) \quad \text{ and } \quad
  \Delta_{\infty}(L)= 
\| \widehat \mu( \cdot;L)- V_{L} (\mu; \cdot) \|_\infty,  
\end{equation}
respectively, in particular with  $L=L^*$  derived in   \eqref{L_general}. Both   $\Delta (\cdot;L^*)$ and $\Delta_{\infty}(L^*)$   depend on the random vector $( \widehat a_{\mathbf k} - a_{\mathbf k} )_{|\mathbf k|_1\leq 4L^*-2 }^\top$, for which we will derive a non-asymptotic bound for the  Gaussian approximation. More generally, the control of the Gaussian approximation error for vectors of Fourier coefficient estimators also facilitates the approximation of the distribution of other quantities of interest, such as $\| \widehat \mu^{*} - V_{L^*}(\mu, \cdot)\|_2$. 

It is worth noting that 
$$
\mu^{*}(\mathbf t )- V_{L^*} (\mu;\mathbf t)= ( \widehat a_{\mathbf k} - a_{\mathbf k} )_{|\mathbf k|_1\leq 4L^*-2 }^\top \Phi_{L^*}(\mathbf t),
$$
where, for $\mathbf t\in \mathcal T$ and $J\in \mathbb N^*$,  
\begin{equation}\label{def:Phi}
	\Phi_J(\mathbf t) = \sum_{| \mathbf k|_1\leq2J}\phi_{\mathbf k}(\mathbf t)\mathbf u_{\mathbf k} + \sum_{\ell=1}^{J-1}\frac{J-\ell }{J} \left \{ \sum_{|\mathbf j|_1=2J+2\ell - 1}\phi_{\mathbf j}( \mathbf t)\mathbf u_{\mathbf j}+\sum_{|\mathbf j|_1=2J+2\ell}\phi_{\mathbf j}( \mathbf t)\mathbf u_{\mathbf j}\right \},
\end{equation}
and $ (\mathbf u_{\mathbf k})_{0\leq |\mathbf k|_1 \leq 4J-2} $ is the canonical basis of  $\mathbb R^{\overline{ 4J-2}}$.

\subsection{Berry-Esseen bounds by Stein's method}\label{BE_subsec}

Let $d\in \mathbb N$ and consider the centered random vector $(\widehat a_{\mathbf k }- a_{\mathbf k})^\top_{|\mathbf k|_1 \leq d}\in\mathbb R^{\overline d}$. Propositions \ref{variance-B} and \ref{variance-V} indicate that the leading term in the approximation of this vector is 
\begin{equation}\label{eq:def_W_Stein}
	W(d)=(V_{\mathbf k})_{|\mathbf k|_1\leq d}^\top\in\mathbb R^{\overline d},\qquad \text{with  } \quad V_{\mathbf k}= \sum_{(i,m)} \omega_{i,m} \frac{\eta_{i,m}}{f_{\mathbf T}(\mathbf T_{i,m})}\phi_{\mathbf k}(\mathbf T_{i,m}),
\end{equation}
 $\eta_{i,m}=\{\Xp{i} - \mu\} (\mathbf  T_{i,m}) + \sigma (\mathbf  T_{i,m}) e_{i,m} $, and $\overline d$ defined according to \eqref{J_bar}.
It worth recalling that $V_{\mathbf k}$ is the variance term in the decomposition of $(\widehat a_{\mathbf k }- a_{\mathbf k})$ in \eqref{estimator:decom}. Thanks to the control neighbors approach we propose, the bias term $(B_{\mathbf k }- a_{\mathbf k})$ is negligible and the distribution of $W(d)$  does not depend on the mean function which has to be estimated.

Let us point out that the components of $W(d)$ have an intricate dependence structure due to the weights $\omega_{i,m}$ and the variables $\eta_{i,m}$. In order to derive Gaussian approximation error bounds, we adopt Stein's method. More precisely, we follow the exchangeable pair approach in the multivariate setting, as developed by  \cite{MVNAReinert2009}; see also  \cite{Chatterjee2008}.

\medskip

\begin{theorem}\label{normal_approximation}
Assume that (H\ref{assuH2}) to  (H\ref{assuH3})  and (D\ref{assuD1}) to (D\ref{assuD5}) hold true, and $\EE(|e|^3)<\infty$. Moreover, for $ \mathbf t, \mathbf s \in \mathcal T$,  assume that 
$m_4(\mathbf t,\mathbf s)= \EE\left[\{X-\mu\}^2(\mathbf t)\{X-\mu\}^2(\mathbf s)  \right]$
exists and the map $(\mathbf s, \mathbf t) \mapsto m_4(\mathbf t,\mathbf s)$ is integrable.  Then, if $Z$ is a $\overline d$-dimensional standard Gaussian vector, 
for any three-times differentiable function $h : \mathbb R^{\overline d} \rightarrow \mathbb R$,
\begin{equation}
\left|\EE[ h( W(d))] - \EE[ h(\Sigma^{1/2} Z)]\right| \leq  \mathfrak F  \overline d \left[\frac{|h|_3 }{ \overline M^{\;2}}+|h|_2\left\{ \frac{1 }{  \overline M^{\;3}} + \frac{ 1 }{ \overline M^{\;4}} \sum_{i =1}^N  M_i^2\right\}^{1/2}\right],
\end{equation}
where $ \Sigma=\EE[ W(d)W(d)^\top] $, $\mathfrak F$ is a constant which does not depend  on $h$, and
$$
|h|_2:= \sup_{p,q}\left\| \frac{\partial^2 h}{\partial x_p \partial x_q} \right\|_\infty, \qquad |h|_3:= \sup_{p,q,r}\left\| \frac{\partial^3 h}{\partial x_p \partial x_q \partial x_r} \right\|_\infty.
$$
\end{theorem}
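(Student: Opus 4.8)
The plan is to apply the multivariate exchangeable-pair version of Stein's method from \cite{MVNAReinert2009} to the vector $W(d)$. First I would construct an exchangeable pair $(W, W')$ as follows: draw an index pair $(I,m)$ uniformly at random among the $\overline M$ data points, and replace the corresponding contribution $\eta_{I,m}\phi_{\mathbf k}(\mathbf T_{I,m})/f_{\mathbf T}(\mathbf T_{I,m})$ by an independent resampled copy (resampling the trajectory value, the design point and the noise), then recompute the weights $\omega_{i,m}$ with the new configuration of design points. Because of the exchangeability of the $\overline M$ summands, $(W,W')$ is exchangeable. The key linearity condition of \cite{MVNAReinert2009}, namely $\EE[W'-W\mid W]=-\lambda W$ for a suitable $\lambda$ (here $\lambda\asymp \overline M^{-1}$, since each resampling step perturbs an $\overline M^{-1}$-fraction of the sum), must then be checked using Proposition \ref{lem_Vor_Index}, which is exactly what guarantees that the weight terms $1+\widehat c_{i,m}-\widehat d_{i,m}$ behave neatly in expectation; the degree/cumulative-volume identities there are the reason the conditional mean of the increment is (approximately) proportional to $W$ rather than having an uncontrolled drift.

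Next I would invoke the general bound from \citet[Theorem 2.1]{MVNAReinert2009} (or Chatterjee–Meckes), which controls $|\EE h(W)-\EE h(\Sigma^{1/2}Z)|$ by $|h|_2$ times $\EE\|A\|$ where $A$ is the random matrix $\mathrm{Id} - (2\lambda)^{-1}\EE[(W'-W)(W'-W)^\top\mid W]$ (matching the covariance of the linearized increment against the target $\Sigma$), plus $|h|_3$ times $(2\lambda)^{-1}\EE\sum_{p,q,r}|W'_p-W_p||W'_q-W_q||W'_r-W_r|$, controlling the third-order remainder. The bulk of the work is then estimating these two quantities. For the $|h|_3$ term: a single resampling step changes only $O(1)$ weights by $O(\overline M^{-1})$ each (nearest-neighbor degrees in $\mathbb R^D$ are bounded, by the geometric fact underlying $\varrho(D)$), and the affected $\eta$'s and $\phi_{\mathbf k}$'s are bounded in $L^2$ under (D1)–(D5) and the moment hypotheses on $e$ and $m_4$; so each coordinate increment is $O_{L^3}(\overline M^{-1})$, the triple sum over $p,q,r\le\overline d$ contributes $\overline d^{\,3}$ naively but the sparsity of the increment (only $O(1)$ coordinates genuinely move per step, the rest move by the shared reweighting) collapses this, and after multiplying by $\lambda^{-1}\asymp\overline M$ one lands on $\mathfrak F\,\overline d\,\overline M^{-2}$. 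For the $|h|_2$ term: $\EE\|A\|_{\mathrm{HS}}$ splits into a "diagonal/within-trajectory" piece of size $\overline M^{-3/2}$ (from the variance of the Voronoi quantities around their means, the $1/\overline M^3$ inside the square root) and a "cross-trajectory" piece coming from the $\sum_i M_i(M_i-1)$-type covariance contributions in Proposition \ref{variance-V}, which after the resampling-variance accounting produces the $\overline M^{-4}\sum_i M_i^2$ term inside the square root; squaring and adding gives the stated bracket.

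The main obstacle I anticipate is verifying the Stein linearity condition $\EE[W'-W\mid W]=-\lambda W$ with an acceptable error, and more precisely controlling the correction matrix $A$. The difficulty is that re-drawing one design point changes the leave-one-out Voronoi cells, degrees $\widehat d_{i,m}$ and cumulative volumes $\widehat c_{i,m}$ of several \emph{other} points simultaneously, so $W'-W$ is not simply "one summand swapped" — there is a cascade of weight adjustments. Making this rigorous requires (i) a deterministic bound on how many weights can move and by how much when one point is resampled (this is where boundedness of nearest-neighbor in-degrees in $\mathbb{R}^D$ and assumption (H\ref{assuH12}) on $f_{\mathbf T}$ enter), and (ii) conditioning arguments combined with Proposition \ref{lem_Vor_Index} to show the expected drift of these cascaded adjustments is still $-\lambda W$ up to the claimed error, rather than producing an extra bias term. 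A secondary technical point is handling the dependence introduced jointly by the weights $\omega_{i,m}$ (functions of $\tobs$ only) and the errors $\eta_{i,m}$ (involving $X_i$ and $e_{i,m}$); conditioning on $\tobs$ first, using (D\ref{assuD3}), decouples these and lets the covariance bookkeeping reduce to the moment computations already performed for Propositions \ref{variance-B} and \ref{variance-V}, but one must be careful that the resampling in the exchangeable pair respects this conditional structure. Once these geometric/combinatorial estimates are in place, the rest is the routine substitution into the Reinert–Röllin bound and simplification of the resulting powers of $\overline M$ and $\overline d$.
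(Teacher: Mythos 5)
Your overall strategy (exchangeable pair plus \citet[Th. 2.1]{MVNAReinert2009}) is the right one, but your construction of the pair differs from the paper's in two ways that create genuine, unresolved gaps. First, you resample the design point and then ``recompute the weights,'' which is exactly what produces the cascade of changes to the $\widehat c_{j,m'}$ and $\widehat d_{j,m'}$ of neighboring points that you yourself flag as the main obstacle; you never resolve it, and the linearity condition $\EE[W'-W\mid W]=-\Lambda W$ would then hold only with a nonzero remainder $R$ whose control is far from routine. The paper sidesteps this entirely: it keeps all design points $\mathbf T_{i,m}$ fixed and resamples only the trajectory $X_I$ and the noise $\varepsilon'_{I,J}$, setting $\eta'_{I,J}=\varepsilon'_{I,J}+\{X_I'-\mu\}(\mathbf T_{I,J})$ at the \emph{same} point $\mathbf T_{I,J}$. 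The weights $\omega_{i,m}$ are therefore untouched, there is no cascade, and since $\eta'_{I,J}$ is conditionally centered the linearity condition holds exactly with $R=0$ and $\Lambda=\{\overline d\,\overline M\}^{-1}I_{\overline d}$.

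Second, you miss the coordinate-randomization device: the paper draws a uniform index $\mathbf S$ over $\{\mathbf k:|\mathbf k|_1\le d\}$ and perturbs \emph{only the $\mathbf S$-th coordinate} of $W(d)$, so that $(W'_{\mathbf p}-W_{\mathbf p})(W'_{\mathbf q}-W_{\mathbf q})=0$ whenever $\mathbf p\ne\mathbf q$. This kills all off-diagonal terms in the Reinert--R\"ollin quantities $A$ and $B$ and is precisely what yields the single factor $\overline d$ in the final bound. In your construction the swapped summand $\eta_{I,m}\phi_{\mathbf k}(\mathbf T_{I,m})/f_{\mathbf T}(\mathbf T_{I,m})$ appears in \emph{every} coordinate $\mathbf k$, so every coordinate of $W$ moves at each step; your claim that ``only $O(1)$ coordinates genuinely move'' and that sparsity collapses the triple sum is therefore false for your pair, and the naive count would give $\overline d^{\,2}$ or $\overline d^{\,3}$ rather than $\overline d$. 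Beyond these two structural issues, the remaining moment bookkeeping (conditioning on $\tobs$, using Proposition \ref{lem_Vor_Index}, bounded nearest-neighbor degrees, and the $m_4$ hypothesis to control $\Var(\EE^W[(W'_{\mathbf k}-W_{\mathbf k})^2])$ and $\EE|W'_{\mathbf k}-W_{\mathbf k}|^3$) matches the paper's computations, which is how the $\overline M^{-3}$ and $\overline M^{-4}\sum_i M_i^2$ terms arise.
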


\medskip

The constant $\mathfrak F$ in Theorem \ref{normal_approximation} is determined  by the terms $\mathfrak a_1$, $\mathfrak a_2$ and $\mathfrak c_1$ in the proof. The covariance matrix $ \Sigma= \mathbb E\left[ W(d)W(d)^\top \right]= (\Sigma_{\mathbf p,\mathbf q})_{|\mathbf p|_1,|\mathbf q|_1\leq d}$ has the 
components 
\begin{multline}\label{comp_Sig}
	\Sigma_{\mathbf p,\mathbf q}=\frac{\varrho(D)}{\overline M} \int_{\mathcal T}\frac{\sigma^2(\mathbf t) + \gamma (\mathbf t, \mathbf t) }{f_{\mathbf T}(\mathbf t)} \phi_{\mathbf p}(\mathbf t) \phi_{\mathbf q}(\mathbf t) {\rm d}\mathbf t \times \{1 + o(1)\}\\
	+ \frac{1}{\overline M(\overline M-1)} \sum_{i=1}^N M_i(M_i-1) \int_{\mathcal T^2} \gamma(\mathbf t, \mathbf s) \phi_{\mathbf p}(\mathbf t) \phi_{\mathbf q}(\mathbf s){\rm d}\mathbf t {\rm d}\mathbf s \times \{1+o(1)\}.
\end{multline}

\medskip

Theorem \ref{normal_approximation} and Proposition \ref{limit_law} allow  to derive the normal approximation for quantities that depend on Fourier coefficients, such as any linear combination. We  define the  normalizing rates~: 
 \begin{equation}\label{eq:rate_V}
	r_1(L)=\sqrt{\frac{\overline M}{L^D} }, \qquad \text{and} \qquad 
	r_2= \frac{\overline M}{ \sqrt{\sum_{1\leq i \leq N} M_i(M_i-1)}} .
\end{equation}

%\medskip

\begin{corollary}\label{coro_linear}
The conditions of Theorem \eqref{normal_approximation} are met, and $\min \{r_1(L^*), r_2\}\rightarrow \infty$ with $L^*$ as in \eqref{L_general}. Then, for any vector $w$  with the same dimension as  $W(4L^*-2)$, the variables
$$
\min \{r_1(L^*), r_2\}W(4L^*-2)^\top w \qquad \text{ and  } \qquad \min\{r_1(L^*)
, r_2\}( \Sigma^{1/2}Z)^\top w
$$ 
have the same  Gaussian asymptotic distribution, provided the components of $w$ are bounded.
\end{corollary}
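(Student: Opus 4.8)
The plan is to deduce this one-dimensional statement from the multivariate smooth-function bound of Theorem~\ref{normal_approximation}, evaluated at test functions adapted to the linear functional $x\mapsto r\,x^\top w$, where $r=\min\{r_1(L^*),r_2\}$. Concretely, I would fix $g\in C^3(\RR)$ with $g,g',g'',g'''$ bounded, put $d=4L^*-2$, and apply Theorem~\ref{normal_approximation} to $h(x)=g(r\,x^\top w)$. The chain rule gives $\partial_p h=r\,w_p\,g'(r\,x^\top w)$, $\partial_p\partial_q h=r^2 w_p w_q\,g''(r\,x^\top w)$ and $\partial_p\partial_q\partial_s h=r^3 w_p w_q w_s\,g'''(r\,x^\top w)$, so, since the components of $w$ are bounded, $|h|_2\le r^2\|w\|_\infty^2\|g''\|_\infty$ and $|h|_3\le r^3\|w\|_\infty^3\|g'''\|_\infty$. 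As $h(W(d))=g(r\,W(4L^*-2)^\top w)$ and $h(\Sigma^{1/2}Z)=g(r\,(\Sigma^{1/2}Z)^\top w)$, Theorem~\ref{normal_approximation} then yields
\begin{multline*}
\left|\EE\!\left[g\!\left(r\,W(4L^*-2)^\top w\right)\right]-\EE\!\left[g\!\left(r\,(\Sigma^{1/2}Z)^\top w\right)\right]\right|\\
\le \mathfrak F\,\overline{4L^*-2}\;\|w\|_\infty^2\!\left[\frac{r^3\|w\|_\infty\|g'''\|_\infty}{\overline M^{\;2}}+r^2\|g''\|_\infty\!\left\{\frac{1}{\overline M^{\;3}}+\frac{\sum_{i=1}^N M_i^2}{\overline M^{\;4}}\right\}^{\!1/2}\right].
\end{multline*}

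The next step is to show that this bound tends to $0$ for every such $g$. First I would unpack the hypothesis: by \eqref{eq:rate_V} and \eqref{L_general}, $r_1(L^*)^2=\overline M/(L^*)^D\asymp \overline M^{\,2\alphamu/(2\alphamu+D)}$, so $r_1(L^*)\to\infty$ forces $\overline M\to\infty$; and $r_2\to\infty$ is equivalent to $\sum_{i=1}^N M_i(M_i-1)/\overline M^{2}\to 0$, hence also $\sum_{i=1}^N M_i^2/\overline M^{2}\to 0$. Then, since $r\le r_1(L^*)$ and $\overline{4L^*-2}\lesssim (L^*)^D$ by \eqref{J_bar}, one has $\overline{4L^*-2}\,r^2\lesssim \overline M$ and $\overline{4L^*-2}\,r^3\lesssim \overline M^{3/2}(L^*)^{-D/2}$. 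Substituting into the display and using that $\|w\|_\infty$ is bounded, the $|h|_3$-term is $\lesssim \overline M^{-1/2}(L^*)^{-D/2}\to 0$, and the $|h|_2$-term is $\lesssim\big\{\overline M^{-1}+\overline M^{-2}\sum_{i=1}^N M_i^2\big\}^{1/2}\to 0$; hence the right-hand side vanishes.

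To conclude, a class of smooth bounded test functions with bounded derivatives (for instance $g\in C^3(\RR)$ with $g,g',g'',g'''$ bounded) is convergence-determining on $\RR$, so the vanishing above shows that the laws of $r\,W(4L^*-2)^\top w$ and of $r\,(\Sigma^{1/2}Z)^\top w\sim\mathcal N(0,r^2 w^\top\Sigma w)$ are asymptotically indistinguishable; in particular they converge to the same centred Gaussian whenever $r^2 w^\top\Sigma w$ has a limit (and along convergent subsequences otherwise), with $\Sigma$ as in \eqref{comp_Sig}. I expect the one delicate point to be the calibration in the previous paragraph: Theorem~\ref{normal_approximation} carries the diverging dimension factor $\overline{4L^*-2}\asymp (L^*)^D$, and rescaling the test function by $r$ introduces the factors $r^2$ and $r^3$, so one has to check that the optimal choice $L^*\asymp \overline M^{1/(2\alphamu+D)}$ together with the two consequences of $\min\{r_1(L^*),r_2\}\to\infty$ — namely $\overline M\to\infty$ and $\sum_{i=1}^N M_i^2/\overline M^{2}\to 0$ — are exactly what is needed to absorb $\overline{4L^*-2}\,r^3$ and $\overline{4L^*-2}\,r^2$ against the powers of $\overline M$ in the denominators. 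No probabilistic input beyond Theorem~\ref{normal_approximation} is required, since its exchangeable-pair Stein argument already encodes the full dependence structure of $W(d)$.
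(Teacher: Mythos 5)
Your proposal is correct and follows exactly the route the paper indicates (the paper only sketches it in one sentence after the corollary): apply Theorem \ref{normal_approximation} to $h(x)=g(r\,x^\top w)$ and check that the diverging factors $\overline{4L^*-2}\,r^2$ and $\overline{4L^*-2}\,r^3$ are absorbed by the powers of $\overline M$, using $r\le r_1(L^*)$, $\overline M\to\infty$ and $\sum_i M_i^2/\overline M^2\to 0$. Your rate computations and the final appeal to a convergence-determining class of $C^3$ test functions are sound, so nothing further is needed.
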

 
\medskip

Corollary \ref{coro_linear} is obtained by applying Theorem \ref{normal_approximation}  with the sub-class of  three-times differentiable functions $h$ defined as $h( W(4L^* -2)) = g(W(4L^* -2)^\top w)$ with $g: \mathbb R\rightarrow \mathbb R$. Choosing $w= \Phi_{L^*}(\mathbf t)$,    Corollary \ref{coro_linear} implies that  $\min\{r_1(L^*), r_2\}\Delta(\mathbf t;L^*)$ satisfies a Central Limit Theorem, for any  $\mathbf t\in \mathcal T$, provided the variance of $\min\{r_1(L^*)
, r_2\}( \Sigma^{1/2}Z)^\top \Phi_{L^*}(\mathbf t)$ has a finite limit. This latter fact is established in the following lemma.

\begin{lemma}\label{limit_law}
The conditions of Theorem \eqref{normal_approximation} are met, and $\min \{r_1(L^*), r_2\}\rightarrow \infty$. Then
\begin{multline}\label{var_asymp}
	(\min \{r_1(L^*), r_2\})^2 \Phi_{L^*}(\mathbf t)^\top \Sigma \Phi_{L^*}(\mathbf t) = \{1+o(1)\} \frac{(\min \{r_1(L^*), r_2\})^2}{r_2^2(L^*)}\times \gamma(\mathbf t,\mathbf t)\\
	+\{1+o(1)\} \frac{(\min \{r_1(L^*), r_2\})^2}{r_1^2(L^*)} \times \frac{\varrho(D)(2^{2D+1}-2^D)}{(D+1)!}\times  \frac{\sigma^2(\mathbf t) + \gamma (\mathbf t, \mathbf t) }{f_{\mathbf T}(\mathbf t)}.
\end{multline} 

\end{lemma}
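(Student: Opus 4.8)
\emph{Sketch of the proof.} I would start from the splitting of the covariance matrix dictated by \eqref{comp_Sig}: write $\Sigma=\Sigma^{\mathrm v}+\Sigma^{\mathrm c}$, where $\Sigma^{\mathrm v}$ collects the ``variance'' contribution (carrying the factor $\varrho(D)/\overline M$, with entries proportional to $\int_{\mathcal T}\tfrac{\sigma^2(\mathbf u)+\gamma(\mathbf u,\mathbf u)}{f_{\mathbf T}(\mathbf u)}\phi_{\mathbf p}(\mathbf u)\phi_{\mathbf q}(\mathbf u)\,\mathrm d\mathbf u$) and $\Sigma^{\mathrm c}$ the ``within-curve'' contribution (carrying the factor $\tfrac{1}{\overline M(\overline M-1)}\sum_i M_i(M_i-1)$, with entries proportional to $\iint_{\mathcal T^2}\gamma(\mathbf u,\mathbf v)\phi_{\mathbf p}(\mathbf u)\phi_{\mathbf q}(\mathbf v)\,\mathrm d\mathbf u\,\mathrm d\mathbf v$), so that $\Phi_{L^*}(\mathbf t)^\top\Sigma\,\Phi_{L^*}(\mathbf t)=\Phi_{L^*}(\mathbf t)^\top\Sigma^{\mathrm v}\Phi_{L^*}(\mathbf t)+\Phi_{L^*}(\mathbf t)^\top\Sigma^{\mathrm c}\Phi_{L^*}(\mathbf t)$. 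The algebraic engine is that, by the definition \eqref{def:Phi} of $\Phi_J$, with $\mathcal K_J(\mathbf u,\mathbf t):=\sum_{|\mathbf k|_1\le 4J-2}c_{\mathbf k}(J)\phi_{\mathbf k}(\mathbf u)\phi_{\mathbf k}(\mathbf t)$ the integral kernel of the de La Vallée Poussin operator $V_J$ (with $c_{\mathbf k}(J)$ the coefficients occurring in \eqref{def:Phi}), one has the exact identities $\Phi_J(\mathbf t)^\top\Sigma^{\mathrm v}\Phi_J(\mathbf t)=\tfrac{\varrho(D)}{\overline M}\int_{\mathcal T}h(\mathbf u)\,\mathcal K_J(\mathbf u,\mathbf t)^2\,\mathrm d\mathbf u\,\{1+o(1)\}$, where $h:=(\sigma^2(\cdot)+\gamma(\cdot,\cdot))/f_{\mathbf T}(\cdot)$, and $\Phi_J(\mathbf t)^\top\Sigma^{\mathrm c}\Phi_J(\mathbf t)=\tfrac{\sum_i M_i(M_i-1)}{\overline M(\overline M-1)}\,(V_J\otimes V_J)[\gamma](\mathbf t,\mathbf t)\,\{1+o(1)\}$.

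For the $\Sigma^{\mathrm c}$ term I would then only need $(V_{L^*}\otimes V_{L^*})[\gamma](\mathbf t,\mathbf t)\to\gamma(\mathbf t,\mathbf t)$: applying $V_{L^*}$ successively in each of the two arguments and using \eqref{dist_best} together with the approximation estimate \eqref{handcomb}, the (uniform in the frozen argument) $H_\gamma$-Hölder property from (D\ref{assuD5}) gives $\sup_{\mathbf s}\|V_{L^*}[\gamma(\cdot,\mathbf s)]-\gamma(\cdot,\mathbf s)\|_\infty\to0$, hence convergence on the diagonal; since $\tfrac{\sum_i M_i(M_i-1)}{\overline M(\overline M-1)}=r_2^{-2}\{1+o(1)\}$ this produces the first line of \eqref{var_asymp}. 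For the $\Sigma^{\mathrm v}$ term the key is a localization: $h$ is Hölder on $\mathcal T$ by (D\ref{assuD4}), (D\ref{assuD5}) and (H\ref{assuH12}), so writing $h(\mathbf u)=h(\mathbf t)+(h(\mathbf u)-h(\mathbf t))$, the first part contributes $h(\mathbf t)\int_{\mathcal T}\mathcal K_{L^*}(\mathbf u,\mathbf t)^2\,\mathrm d\mathbf u=h(\mathbf t)\sum_{|\mathbf k|_1\le 4L^*-2}c_{\mathbf k}(L^*)^2\phi_{\mathbf k}(\mathbf t)^2$ (Parseval), while the remainder is negligible compared with it after bounding $\bigl|\int(h(\mathbf u)-h(\mathbf t))\mathcal K_{L^*}^2\bigr|\lesssim\int\|\mathbf u-\mathbf t\|^{H_0}\mathcal K_{L^*}(\mathbf u,\mathbf t)^2\,\mathrm d\mathbf u$ and invoking the decay/concentration estimates of the de La Vallée Poussin kernel (concentration at scale $(L^*)^{-1}$ with $L^2$-mass of order $(L^*)^D$; see the references cited around \eqref{handcomb}). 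A direct computation of the remaining kernel quantity — grouping the Fourier basis into $\sin$--$\cos$ blocks, inserting the explicit weights $c_{\mathbf k}(L^*)$, and counting the index lattice points inside the $\ell_1$-ball — produces the leading constant $\tfrac{2^{2D+1}-2^D}{(D+1)!}$, so that $\Phi_{L^*}(\mathbf t)^\top\Sigma^{\mathrm v}\Phi_{L^*}(\mathbf t)=r_1^{-2}(L^*)\,\tfrac{\varrho(D)(2^{2D+1}-2^D)}{(D+1)!}\,h(\mathbf t)\{1+o(1)\}$. Multiplying both terms by $(\min\{r_1(L^*),r_2\})^2$ and adding yields \eqref{var_asymp}.

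I expect the localization step for the $\Sigma^{\mathrm v}$ term to be the main obstacle: one must show $\int_{\mathcal T}\|\mathbf u-\mathbf t\|^{H_0}\mathcal K_{L^*}(\mathbf u,\mathbf t)^2\,\mathrm d\mathbf u=o((L^*)^D)$, and since the (triangular, multivariate) de La Vallée Poussin kernel has only slowly decaying oscillatory tails, this requires a careful off-diagonal $L^2$-bound — splitting $\mathcal T$ into an $\varepsilon$-ball around $\mathbf t$ and its complement, controlling the mass of $\mathcal K_{L^*}^2$ on the complement for fixed $\varepsilon$ by the kernel estimates, and then letting $\varepsilon\to0$ slowly. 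A secondary, more bookkeeping, difficulty is that the $\{1+o(1)\}$ factors in \eqref{comp_Sig} must be uniform enough across the $\overline{4L^*-2}\asymp(L^*)^D$ pairs $(\mathbf p,\mathbf q)$ entering the quadratic form for them to aggregate into a single $\{1+o(1)\}$.
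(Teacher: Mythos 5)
Your overall route --- splitting $\Sigma=\Sigma^{\mathrm v}+\Sigma^{\mathrm c}$ according to the two terms of \eqref{comp_Sig}, rewriting the quadratic form through the de La Vall\'ee Poussin kernel, handling $\Sigma^{\mathrm c}$ by applying the operator $V_{L^*}$ in each argument of $\gamma$ and invoking (D\ref{assuD5}) with \eqref{dist_best}--\eqref{handcomb}, and identifying $\sum_i M_i(M_i-1)/\{\overline M(\overline M-1)\}=r_2^{-2}\{1+o(1)\}$ --- is the natural one and mirrors how the paper treats the analogous integrated quantities in the proof of Proposition \ref{mean:L2-norm}. There is, however, a concrete inconsistency inside your variance term that you must resolve. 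After Parseval you correctly get $h(\mathbf t)\int_{\mathcal T}\mathcal K_{L^*}(\mathbf u,\mathbf t)^2\,\mathrm d\mathbf u=h(\mathbf t)\sum_{\mathbf k}c_{\mathbf k}(L^*)^2\phi_{\mathbf k}^2(\mathbf t)$: the de La Vall\'ee Poussin weights enter \emph{squared}. But the constant $(2^{2D+1}-2^D)/(D+1)!$ is the leading coefficient of $\frac{1}{L}\sum_{j=0}^{L-1}\sum_{|\mathbf k|_1\le 2(L+j)}\phi_{\mathbf k}^2(\mathbf t)=\sum_{\mathbf k}c_{\mathbf k}(L)\phi_{\mathbf k}^2(\mathbf t)$, i.e.\ weights to the \emph{first} power (the identity used for \eqref{Vterms}). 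These are not equal: for $D=1$ one has $\sum_k c_k\phi_k^2(t)=3L+O(1)$ while $\sum_k c_k^2\phi_k^2(t)=\tfrac83 L+O(1)$. The off-diagonal entries of $\Sigma^{\mathrm v}$ cannot rescue the first-power constant, since for constant $h$ they vanish exactly and the quadratic form equals $h\,\varrho(D)\overline M^{-1}\sum_{\mathbf k}c_{\mathbf k}^2\phi_{\mathbf k}^2(\mathbf t)$. So either your ``direct computation'' must in fact produce the limit of $(L^*)^{-D}\sum_{\mathbf k}c_{\mathbf k}^2\phi_{\mathbf k}^2(\mathbf t)$ (and the constant in your final display changes accordingly), or you need to explain where the discrepancy with the stated target goes; as written the sketch asserts a constant that contradicts its own Parseval step.

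Beyond that, the two difficulties you flag are indeed where the real work lies, and neither is carried out. The localization $\int_{\mathcal T}\{h(\mathbf u)-h(\mathbf t)\}\mathcal K_{L^*}(\mathbf u,\mathbf t)^2\,\mathrm d\mathbf u=o((L^*)^D)$ is the analytic heart of the variance term: for the triangular ($\ell_1$-ball) multivariate de La Vall\'ee Poussin kernel the required off-diagonal $L^2$ concentration is not a quotable textbook fact, and an equivalent (sometimes cleaner) route is to expand $\int_{\mathcal T}h\,\phi_{\mathbf p}\phi_{\mathbf q}$ in terms of Fourier coefficients of $h$ at frequencies built from $\mathbf p\pm\mathbf q$ and use their decay --- but absolute summability of those coefficients for a merely H\"older $h$ with small exponent is itself delicate when $D>1$. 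Likewise, aggregating the entrywise $\{1+o(1)\}$ factors of \eqref{comp_Sig} across the $\asymp(L^*)^{2D}$ pairs $(\mathbf p,\mathbf q)$ requires the sum of absolute values of the entries to have the same order as the signed sum, which is not automatic off the diagonal. These are fixable, but as it stands the proposal is a correct plan with its two hardest steps left open and one leading constant that does not match its own computation.
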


\medskip

The right-hand side of \eqref{var_asymp} reveals the two types of asymptotic variances, corresponding to the cases $r_1(L^*)\ll r_2$ (sparse regime) and $r_1(L^*)\gg r_2$ (dense regime), respectively. See also \cite{ZW2016} for similar findings in the case $D=1$, for twice differentiable mean functions, and with kernel smoothing. As a consequence of Theorem \ref{normal_approximation} and Lemma \ref{limit_law}, for the supremum distance we prove the following result, which appear to be the first of this kind covering both sparse and dense regimes.

\medskip

\begin{corollary}\label{coro_CB}
The conditions of Corollary \ref{coro_linear} hold true. 
Then,  
$$
\sup_{t\in\RR} \left|\PP \left(\min\{r_1(L^*),r_2\} \Delta_\infty(L^*)\leq t \right)\!- \PP \left(\min \{r_1(L^*),r_2\} 
\left\| (\Sigma^{1/2}Z)^\top \Phi_{L^*}(\cdot) \right\|_\infty\!\leq t \right) \right|\rightarrow 0,
$$  
and the convergence holds uniformly over  $\{\mu\in\Sigma(\alphamu,C_\mu;D): \|\mu\|_\infty\leq \psi(\overline M) \}$ for any   $\psi(\overline M)\rightarrow \infty $ such that $ \psi(\overline M)/\;  \overline M^{\; 2\alphamu/\{D (2\alphamu+D)\}}\rightarrow 0$.
\end{corollary}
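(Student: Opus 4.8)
The plan is to transfer the Gaussian approximation for the finite-dimensional vector $W(4L^*-2)$ obtained in Theorem~\ref{normal_approximation} to a statement about the supremum $\Delta_\infty(L^*) = \|W(4L^*-2)^\top \Phi_{L^*}(\cdot)\|_\infty$, via a smoothing argument together with an anti-concentration bound for the Gaussian surrogate. First I would observe that, since $\Delta(\mathbf t;L^*) = W(4L^*-2)^\top \Phi_{L^*}(\mathbf t) + (B\text{-part})$, and the $B$-terms are negligible by \eqref{bterms} (they converge to zero faster than $r_1^{-1}$ and $r_2^{-1}$ after multiplication by $\min\{r_1(L^*),r_2\}$, under the constraint $\|\mu\|_\infty\leq\psi(\overline M)$), it suffices to compare $\min\{r_1,r_2\}\|W^\top\Phi_{L^*}(\cdot)\|_\infty$ with $\min\{r_1,r_2\}\|(\Sigma^{1/2}Z)^\top\Phi_{L^*}(\cdot)\|_\infty$. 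The uniform-in-$\mu$ claim follows because the constant $\mathfrak F$ in Theorem~\ref{normal_approximation} and the $o(1)$ terms do not depend on $\mu$, and the only $\mu$-dependence entering the bound does so through $\|\mu\|_\infty$ via the $B$-part, which is controlled uniformly over the stated class by the hypothesis on $\psi$.

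Next I would discretize $\mathcal T$ by a finite grid $\mathcal T_\delta$ of mesh $\delta$, replacing the sup over $\mathcal T$ by the max over $\mathcal T_\delta$ at the cost of a modulus-of-continuity error: the map $\mathbf t\mapsto \Phi_{L^*}(\mathbf t)$ has components that are trigonometric polynomials of degree $O(L^*)$, so by a Bernstein-type inequality $\|\Phi_{L^*}(\mathbf t)-\Phi_{L^*}(\mathbf s)\|$ is controlled by $L^*\|\mathbf t-\mathbf s\|$ times a factor $\overline{4L^*-2}^{1/2}$, and together with the (high-probability) bound on $\|W(4L^*-2)\|$ coming from $\Sigma$ and Markov/Gaussian tails, choosing $\delta$ polynomially small in $\overline M$ makes the discretization error negligible relative to $\min\{r_1,r_2\}^{-1}$. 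On the grid, $\max_{\mathbf t\in\mathcal T_\delta} W^\top\Phi_{L^*}(\mathbf t)$ is a fixed function of the vector $W(4L^*-2)$; applying Theorem~\ref{normal_approximation} to a smooth approximation $h$ of $x\mapsto \mathbbm{1}\{\max_{\mathbf t\in\mathcal T_\delta} x^\top\Phi_{L^*}(\mathbf t)\leq t\}$ — smoothed at scale $\varepsilon$ — gives a bound of order $\overline d\,(|h|_3\overline M^{-2}+|h|_2(\overline M^{-3}+\overline M^{-4}\sum M_i^2)^{1/2})$, where $|h|_2\lesssim \varepsilon^{-2}$, $|h|_3\lesssim\varepsilon^{-3}$, and $\overline d = \overline{4L^*-2}\asymp (L^*)^D$. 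The smoothing bias is absorbed using an anti-concentration inequality for $\max_{\mathbf t\in\mathcal T_\delta}(\Sigma^{1/2}Z)^\top\Phi_{L^*}(\mathbf t)$: by Lemma~\ref{limit_law} the variance of each coordinate $(\Sigma^{1/2}Z)^\top\Phi_{L^*}(\mathbf t)$, after scaling by $(\min\{r_1,r_2\})^2$, converges to a strictly positive limit uniformly in $\mathbf t$, so Nazarov's/Chernozhukov–Chetverikov–Kato anti-concentration applies and shows $\PP(|\max - t|\leq \varepsilon)\lesssim \varepsilon\sqrt{\log\overline d}$ uniformly in $t$.

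Balancing the three error sources — the Berry–Esseen term $\propto \overline d\,\varepsilon^{-3}\overline M^{-2} + \overline d\,\varepsilon^{-2}(\overline M^{-3}+\overline M^{-4}\sum M_i^2)^{1/2}$, the smoothing/anti-concentration term $\propto \varepsilon\sqrt{\log\overline d}$, and the discretization term — against a power of $\overline M$, one checks that since $\overline d\asymp(L^*)^D\asymp\overline M^{D/(2\alpha_\mu+D)}$ and $\min\{r_1,r_2\}$ is polynomial in $\overline M$, there is a choice of $\varepsilon$ (polynomially small in $\overline M$) making the total bound $o(1)$, uniformly in $t$ and in $\mu$ over the class. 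The main obstacle I expect is the interplay between the dimension $\overline d$ growing polynomially in $\overline M$ and the Berry–Esseen rate: the bound in Theorem~\ref{normal_approximation} carries a factor $\overline d$ and inverse powers of the smoothing scale $\varepsilon$, so one must verify that $\min\{r_1(L^*),r_2\}\to\infty$ fast enough — which is exactly the standing hypothesis — and that the sparse-case term $\overline M^{-4}\sum M_i^2$ (of order $r_2^{-2}\overline M^{-2}$) does not spoil the balance; this forces the smoothing scale and grid mesh to be tuned carefully, and is where most of the real work lies. Routine measure-theoretic details (smoothability of the indicator, passage from $h$-test-function bounds to Kolmogorov distance, handling the $B$-part's contribution to the sup) I would relegate to the Supplement.
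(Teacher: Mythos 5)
Your overall architecture — reduce to the stochastic part $W(4L^*-2)^\top\Phi_{L^*}(\cdot)$, discretize the supremum, apply Theorem \ref{normal_approximation} to a smoothed indicator of the (smoothed) maximum, and absorb the smoothing bias with a Nazarov-type anti-concentration bound whose validity rests on the variance limit in Lemma \ref{limit_law} — is the right one and is the natural use of the tools the paper sets up. Two points, however, deserve more than the passing treatment you give them.

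First, your bookkeeping of the scaling factor is internally inconsistent: the test function in Theorem \ref{normal_approximation} acts on the \emph{unscaled} vector $W(d)$, while the event concerns $\min\{r_1,r_2\}\,\Delta_\infty$. If $\varepsilon$ denotes the smoothing scale in the scaled units (so that the anti-concentration cost is indeed $\varepsilon\sqrt{\log \overline d}$), then $|h|_2\lesssim (\min\{r_1,r_2\})^2\varepsilon^{-2}$ and $|h|_3\lesssim (\min\{r_1,r_2\})^3\varepsilon^{-3}$ (up to the smooth-max temperature), not $\varepsilon^{-2}$ and $\varepsilon^{-3}$. You cannot have both of your stated bounds simultaneously. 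The argument survives — one checks that $\overline d\, r_1^2\asymp \overline M$, so $\overline d\,|h|_3\overline M^{-2}\lesssim \overline d^{-1/2}\overline M^{-1/2}\varepsilon^{-3}$ and the $|h|_2$ term is $\lesssim \overline M^{-\alphamu/(2\alphamu+D)}\varepsilon^{-2}$ up to logarithms, both $o(1)$ for $\varepsilon$ decaying as a small negative power of $\overline M$ — but this verification is the substance of the rate balancing and must be done with the $r$-factors in place.

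Second, and more seriously, the negligibility of the bias part is not routine, and it is precisely where the hypothesis $\psi(\overline M)/\overline M^{\,2\alphamu/\{D(2\alphamu+D)\}}\to 0$ is consumed. You need $\min\{r_1,r_2\}\sup_{\mathbf t}|(B_{\mathbf k}-a_{\mathbf k})^\top_{|\mathbf k|_1\le 4L^*-2}\Phi_{L^*}(\mathbf t)|=o_\PP(1)$ uniformly over the class. The crude route — $\sup_{\mathbf t}|(B-a)^\top\Phi_{L^*}(\mathbf t)|\le \|B-a\|\sup_{\mathbf t}\|\Phi_{L^*}(\mathbf t)\|\lesssim \overline d^{1/2}\|B-a\|$ combined with $\EE\|B-a\|^2\lesssim \overline d\max_{\mathbf k}\EE[(B_{\mathbf k}-a_{\mathbf k})^2]$ from Proposition \ref{variance-B} — yields $r_1\overline d\, C_\mu\overline M^{-1/2-\alphamu/D}=C_\mu\overline d^{1/2}\overline M^{-\alphamu/D}$ for the first contribution, which tends to zero only when $D^2<2\alphamu(2\alphamu+D)$, i.e.\ fails already for moderate $D$ relative to $\alphamu$. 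One needs a genuine maximal-inequality/chaining argument for the centered process $\mathbf t\mapsto (B-a)^\top\Phi_{L^*}(\mathbf t)$ (equivalently, for the control-neighbors de La Vall\'ee Poussin kernel estimator of $V_{L^*}(\mu,\cdot)$) that saves a factor $\overline d^{1/2}$ over Cauchy–Schwarz; only then does the $\|\mu\|_\infty$-dependent term reduce to $\psi(\overline M)L^*\overline M^{-1/D}=\psi(\overline M)\overline M^{-2\alphamu/\{D(2\alphamu+D)\}}$, matching the corollary's condition exactly. Relegating this to ``routine details in the Supplement'' hides the one step that is specific to this corollary. Finally, note that the anti-concentration step implicitly requires $\inf_{\mathbf t}\gamma(\mathbf t,\mathbf t)>0$ (dense regime) or $\inf_{\mathbf t}\{\sigma^2(\mathbf t)+\gamma(\mathbf t,\mathbf t)\}>0$ (sparse regime) so that the limiting variance in Lemma \ref{limit_law} is bounded away from zero uniformly in $\mathbf t$; you assert this without justification.
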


\medskip

Corollaries \ref{coro_linear} and \ref{coro_CB} allow to construct asymptotic confidence regions centered at  \emph{de La Vallée Poussin} sum $V_{L^*}(\mu, \cdot)$. We next provide the theoretical grounds for the construction of asymptotic confidence regions centered at the mean function, provided $V_{L^*}(\mu, \cdot)$ suitably approximates $\mu$, as is the case for $\mu \!\in \!\mathcal F_D\!=\overline{\operatorname{Span}}\left\{ \phi_{\mathbf{k}}: \mathbf{k}\in \mathbb N ^D\right\} $.

\medskip

\begin{corollary}\label{coro_CB2}
	Suppose the conditions of Corollary \ref{coro_linear} hold true and $\mu\in\Sigma(\alphamu,C_\mu;D)\cap \mathcal F_D$. For any $\vartheta(\overline M)\rightarrow \infty $ such that, say, $\log(\overline{M})/\vartheta(\overline M)\rightarrow\infty$, and 
	$$
	L^\diamondsuit = \vartheta(\overline M) \overline{M}^{\; \frac{1}{2\alpha_\mu + D}} \asymp \vartheta(\overline M)L^*, %or r_2 \gg r_1
	$$
 for any $\mathbf t \in\mathcal T$,  	the quantities 
	$$
	\min\{r_1(L^\diamondsuit), r_2\} \left( \widehat{\mu}(\mathbf{t},L^\diamondsuit ) -  \mu(\mathbf{t}) \right) \quad \text{and} \quad \min\{r_1(L^\diamondsuit), r_2\} (\Sigma^{1/2}Z)^\top \Phi_{L^\diamondsuit}(\mathbf t),
	$$
	have the same asymptotic distribution. Moreover,  
	$$
	\min\{r_1(L^
	\diamondsuit),r_2\}
   \| \widehat \mu( \cdot;L^\diamondsuit)-  \mu(\cdot) \|_\infty  \quad \text{  and } \quad 
		\min \{r_1(L^\diamondsuit),r_2\} 
 	\left\| (\Sigma^{1/2}Z)^\top \Phi_{L^\diamondsuit}(\cdot) \right\|_\infty , 
	$$
		have the same asymptotic distribution.
\end{corollary}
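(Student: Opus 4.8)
Fix $\mathbf t\in\mathcal T$ and decompose, with $\Delta(\cdot;L)$ and $\Delta_{\infty}(L)$ as in \eqref{Deltas},
\begin{equation}\label{eq:coro_CB2_split}
	\widehat\mu(\mathbf t;L^\diamondsuit)-\mu(\mathbf t)=\Delta(\mathbf t;L^\diamondsuit)+\big\{V_{L^\diamondsuit}(\mu;\mathbf t)-\mu(\mathbf t)\big\},
\end{equation}
and note that $\big|\,\|\widehat\mu(\cdot;L^\diamondsuit)-\mu\|_\infty-\Delta_{\infty}(L^\diamondsuit)\,\big|\le\|V_{L^\diamondsuit}(\mu;\cdot)-\mu\|_\infty$. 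The plan is (i) to establish the conclusions of Corollaries \ref{coro_linear} and \ref{coro_CB} with $L^\diamondsuit$ in the role of $L^*$, for $\Delta(\mathbf t;L^\diamondsuit)$ and $\Delta_{\infty}(L^\diamondsuit)$, and (ii) to discard the \emph{deterministic} approximation bias $V_{L^\diamondsuit}(\mu;\cdot)-\mu$, which is shown negligible at the scale $\min\{r_1(L^\diamondsuit),r_2\}$.

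For (i), observe that, beyond the standing assumptions inherited from Theorem \ref{normal_approximation}, the proofs of Corollaries \ref{coro_linear} and \ref{coro_CB} use only that $\min\{r_1(\cdot),r_2\}\to\infty$, that the weight vector $\Phi_{L^\diamondsuit}(\mathbf t)$ has uniformly bounded entries, and that the Berry--Esseen bound of Theorem \ref{normal_approximation} vanishes for the relevant dimension; none of this is special to $L^*$. By \eqref{def:Phi} each entry of $\Phi_J(\mathbf t)$ equals $\phi_{\mathbf k}(\mathbf t)$ times a weight lying in $[0,1]$, hence is bounded by $2^{D/2}$. Since $L^\diamondsuit\asymp\vartheta(\overline M)\,\overline M^{1/(2\alpha_\mu+D)}$, one has $r_1(L^\diamondsuit)\asymp\overline M^{\alpha_\mu/(2\alpha_\mu+D)}\,\vartheta(\overline M)^{-D/2}\to\infty$, because $\vartheta(\overline M)=o(\log\overline M)$ grows slower than every power of $\overline M$, while $r_2\to\infty$ is already contained in the standing hypothesis $\min\{r_1(L^*),r_2\}\to\infty$; thus $\min\{r_1(L^\diamondsuit),r_2\}\to\infty$. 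The same slow growth keeps the dimension $\overline{4L^\diamondsuit-2}\asymp(L^\diamondsuit)^D$ compatible with the non-asymptotic bounds, so that the right-hand side of Theorem \ref{normal_approximation} still tends to zero and, in the splitting $\Delta(\mathbf t;L^\diamondsuit)=W(4L^\diamondsuit-2)^\top\Phi_{L^\diamondsuit}(\mathbf t)+\sum_{|\mathbf k|_1\le4L^\diamondsuit-2}(B_{\mathbf k}-a_{\mathbf k})\,\Phi_{L^\diamondsuit}(\mathbf t)_{\mathbf k}$ induced by \eqref{estimator:decom}, the $(B_{\mathbf k}-a_{\mathbf k})$-remainder vanishes in probability once multiplied by $\min\{r_1(L^\diamondsuit),r_2\}$ — the estimate being the one used for $L^*$, the \emph{fixed} constant $\|\mu\|_\infty$ only requiring $L^\diamondsuit\ll\overline M^{1/D}$, which holds. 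Consequently $\min\{r_1(L^\diamondsuit),r_2\}\,\Delta(\mathbf t;L^\diamondsuit)$ has the same asymptotic law as $\min\{r_1(L^\diamondsuit),r_2\}\,(\Sigma^{1/2}Z)^\top\Phi_{L^\diamondsuit}(\mathbf t)$, and $\min\{r_1(L^\diamondsuit),r_2\}\,\Delta_{\infty}(L^\diamondsuit)$ is, in the Kolmogorov distance, close to $\min\{r_1(L^\diamondsuit),r_2\}\,\|(\Sigma^{1/2}Z)^\top\Phi_{L^\diamondsuit}(\cdot)\|_\infty$.

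For (ii), since $\mu\in\Sigma(\alpha_\mu,C_\mu;D)\cap\mathcal F_D$, Proposition \ref{reste-control} gives $\|V_{L^\diamondsuit}(\mu;\cdot)-\mu\|_\infty\le\mathcal C_\mu(L^\diamondsuit)^{-\alpha_\mu}$, whence
\begin{equation}\label{eq:coro_CB2_bias}
	\min\{r_1(L^\diamondsuit),r_2\}\,\|V_{L^\diamondsuit}(\mu;\cdot)-\mu\|_\infty\ \le\ r_1(L^\diamondsuit)\,\mathcal C_\mu\,(L^\diamondsuit)^{-\alpha_\mu}\ \asymp\ \mathcal C_\mu\,\vartheta(\overline M)^{-(D/2+\alpha_\mu)}\ \longrightarrow\ 0 .
\end{equation}
By \eqref{eq:coro_CB2_split} the pointwise statistic differs from $\Delta(\mathbf t;L^\diamondsuit)$ by a deterministic term that, rescaled by $\min\{r_1(L^\diamondsuit),r_2\}$, vanishes, so Slutsky's lemma transfers the asymptotic law obtained in (i) to $\min\{r_1(L^\diamondsuit),r_2\}\{\widehat\mu(\mathbf t;L^\diamondsuit)-\mu(\mathbf t)\}$, proving the first claim. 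For the uniform claim, the inequality noted after \eqref{eq:coro_CB2_split}, combined with \eqref{eq:coro_CB2_bias}, shows that $\min\{r_1(L^\diamondsuit),r_2\}\,\|\widehat\mu(\cdot;L^\diamondsuit)-\mu\|_\infty$ and $\min\{r_1(L^\diamondsuit),r_2\}\,\Delta_{\infty}(L^\diamondsuit)$ differ by a quantity tending to zero in probability; Gaussian anti-concentration for the supremum of $(\Sigma^{1/2}Z)^\top\Phi_{L^\diamondsuit}(\cdot)$ — as already invoked in the proof of Corollary \ref{coro_CB} — then lets a converging-together argument transfer the Kolmogorov closeness to the $\mu$-centered statistic, and transitivity of ``same asymptotic distribution'' concludes. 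The delicate point, and the bulk of the work, is the double role of $\vartheta(\overline M)$: it must diverge so that the \emph{enlarged} bias $\mathcal C_\mu(L^\diamondsuit)^{-\alpha_\mu}$ of the coarser projection $V_{L^\diamondsuit}$ is killed by the rate, as in \eqref{eq:coro_CB2_bias}, yet grow slowly — the hypothesis $\log(\overline M)/\vartheta(\overline M)\to\infty$ — so that enlarging the ambient dimension to $\overline{4L^\diamondsuit-2}\asymp(L^\diamondsuit)^D$ neither destroys $\min\{r_1(L^\diamondsuit),r_2\}\to\infty$ nor spoils the vanishing of the Berry--Esseen bound of Theorem \ref{normal_approximation} and of the $B_{\mathbf k}$-remainder, which is confirmed by re-running those proofs with the enlarged index $4L^\diamondsuit-2$ and covariance $\Sigma$.
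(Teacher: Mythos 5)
Your proof is correct and follows the route the paper intends: split $\widehat\mu(\cdot;L^\diamondsuit)-\mu$ into the stochastic part $\Delta(\cdot;L^\diamondsuit)$, to which Corollaries \ref{coro_linear} and \ref{coro_CB} apply verbatim with $L^\diamondsuit$ in place of $L^*$, plus the deterministic approximation error $V_{L^\diamondsuit}(\mu;\cdot)-\mu$, which Proposition \ref{reste-control} and the computation $r_1(L^\diamondsuit)(L^\diamondsuit)^{-\alpha_\mu}\asymp\vartheta(\overline M)^{-(D/2+\alpha_\mu)}\to0$ show is negligible at the inferential scale (undersmoothing), with the slow growth $\vartheta(\overline M)=o(\log\overline M)$ preserving the divergence of the rates and the vanishing of the Berry--Esseen and $B_{\mathbf k}$ remainders. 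No gaps.
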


%\medskip

\begin{remark}
Considering the expression \eqref{comp_Sig} of the components of $\Sigma$, it is worth noting that the Gaussian approximation given by the Corollary \ref{coro_CB2} does not depend on the mean function, which is the target of the inference. This pivotality property is due to our approach to the Fourier coefficients estimation using fast integration. 
\end{remark}

%\medskip

\begin{remark} Corollary \ref{coro_CB2} holds for a sequence $\vartheta(\overline M) $ which increases to infinity at an arbitrarily slow rate. The conclusion of Corollary \ref{coro_CB2} is also valid with $\vartheta(\overline M)\asymp 1$  as soon as $r_2/r_1(L^*) \rightarrow 0$, that is in the dense regime. 
\end{remark}

In this section we have assumed that the matrix $\Sigma$ with the entries in \eqref{comp_Sig} is given. In practice, the entries can be estimated by replacing $\sigma^2(\mathbf t)$ and $\gamma(\mathbf t,\mathbf t)$ with uniformly consistent estimators.
In the Supplement we  propose an alternative method for constructing the confidence regions based on subsampling and without using  the matrix $\Sigma$.

% !TeX root = ../mean.tex

\section{Regularity estimation}\label{sec:reg}

Estimation and inference have been constructed above assuming that the Hölder exponent $\alphamu$ is given in Assumption (H\ref{assuH3}). We now want to provide an estimate of this exponent and make our approach adaptive. However, it is well known that a Hölder class is too large to allow identification of the exponent $\alphamu$. It includes functions with any number of partial derivatives, making it impossible to identify the exponent $\alphamu$. See, for example, \cite{HN2011} and \cite{PR2019} for discussions on closely related problems. It is for this reason that the notion of \emph{pointwise Hölder exponent}, as borrowed from mathematical analysis, is considered. This notion defines the pointwise regularity around a point in the domain.  We then introduce a rich class of functions with a constant pointwise Hölder exponent for which this exponent can be identified and consistently estimated. Below, $B(\mathbf{t}, \eta)\subset \mathbb R^D$ denotes the ball centered at  $\mathbf t$ with radius $\eta$.

\medskip

\begin{definition}\label{pointwiseholder}
Let $s>0$   and $\mathbf{t}_0 \in \mathcal T$. A function $f: \mathcal{T} \rightarrow \mathbb{R}$ belongs to the class $C^s_{\mathbf{t}_0}$ if and only if   a radius $\eta > 0$, a polynomial $P$, and a constant $C$ exist such that 
\begin{equation}\label{cond_a}
    \forall \mathbf{t} \in B(\mathbf{t}_0, \eta), \qquad |f(\mathbf{t}) - P(\mathbf{t} - \mathbf{t}_0)| \leq C \|\mathbf{t} - \mathbf{t}_0\|^s.
\end{equation}
The pointwise Hölder exponent is defined by $\nu_f(\mathbf{t}_0) = \sup \{ s : f \in C^s_{\mathbf{t}_0} \}$.
\end{definition}

\medskip

The constant $C$ in \eqref{cond_a} can depend on $f$ and $s$.  Note that when the function $f$ is not differentiable, condition \eqref{cond_a} becomes~: 
    $\forall \mathbf{t} \in B(\mathbf{t}_0, \eta)$, 
    $|f(\mathbf{t}) - f(\mathbf{t}_0)| \leq C \|\mathbf{t} - \mathbf{t}_0\|^s$.
The reader may refer to \cite{guiheneuf19982}, \cite{jaffard1991pointwise}, \cite{S2016}   for more details on the pointwise Hölder exponent.

\medskip

\begin{remark}\label{exp_Sig9} Let us provide some examples and insight on the pointwise Hölder exponent notion. 
For example,  the function $\mathbf{t} \mapsto \|\mathbf{t}\|^{\nu}$, where $\nu$ is a positive number $(\nu \notin 2\mathbb{N})$, has a pointwise Hölder exponent at the origin equal to $\nu$. 
Furthermore,  for $\nu$ and $\beta$ positive numbers, the function $\mathbf{t} \mapsto \|\mathbf{t}\|^{\nu} \sin\left( \|\mathbf{t}\|^{-\beta} \right)$, extended by continuity at the origin,   also has a Hölder pointwise exponent at the origin equal to $\nu$.  Finally, note that the sample paths of the Brownian motion, considered on a compact interval $\mathcal T$, are almost surely not Hölder continuous of order 1/2, and Hölder continuous of any order less than 1/2; see \citep[Theorem 2.2, Corollary 2.6]{Yor1999}. This means that with probability 1 the  pointwise Hölder exponent of the Brownian motion is \emph{constant} and equal to 1/2. The example of the Brownian motion sample paths reveals that the exponent $\alphamu$ in Assumption (H\ref{assuH3}) can be arbitrarily close to the pointwise Hölder exponent if the latter is constant.
\end{remark}

\begin{remark}\label{Rem10}
It is worth discussing on  pointwise Hölder exponents greater than 1. For simplicity, consider $\mathbf t\in\mathbb R$. On the one hand, it can be shown  that the pointwise Hölder exponent at 0 of any primitive of the function $\mathbf{t} \mapsto |\mathbf{t}|^{\nu}$ is equal to $\nu + 1$. On the other hand, any primitive of the function $\mathbf{t} \mapsto |\mathbf{t}|^{\nu} \sin\left( |\mathbf{t}|^{-\beta} \right)$ has the pointwise Hölder exponent at 0 equal to $\nu + 1+\beta $. The difference between the two examples is due to the presence of oscillations around 0, which after integration increase the pointwise Hölder exponent.
 \end{remark}
 
 \medskip

In this section, we focus on the functions with a constant pointwise Hölder exponent. 
It worth noting that the class of functions  with  constant pointwise Hölder exponent
is rich. Indeed, for any $s>0$, there exists a dense open set in the space of Hölder functions of exponent $s>0$ defined on $\mathcal T$, denoted by $C^s (\mathcal T)$, such that any function in this set has a constant pointwise Hölder exponent equal to $s$. See \citet[Theorem 2.9]{S2016}. Here, we focus on the case where the constant pointwise Hölder exponent belongs to  $(0,1)$. Remark \ref{Rem10} reveals that the case of  pointwise Hölder exponent larger than 1 is even more challenging. This case is left for future study. 

\medskip

\begin{definition}\label{pointwiseSigtilde}
For $\alpha \in (0,1)$ and $r\geq 0$, let $\widetilde{C}^\alpha (\mathcal T;r)$ be the class functions $f:\mathcal T \rightarrow \RR$ with constant pointwise Hölder exponent   $\nu_f(\mathbf{t})=\alpha$,  $\forall \mathbf t \in\mathcal T$, for which a constant $C_f$ exists such that    $\forall \epsilon >0$ it holds
$$
\sup_{\mathbf t_1\neq  \mathbf t_2}\frac{|f(\mathbf t_1)- f(\mathbf t_2)|}{\|\mathbf t_1- \mathbf t_2\|^{\alpha-\epsilon}}\leq \epsilon^{-r} C_f.
$$
\end{definition}

\medskip

Note that $\widetilde{C}^\alpha (\mathcal T;r^\prime ) \subset \widetilde{C}^\alpha (\mathcal T;r)$ for any $0\leq r^\prime \leq  r$. Moreover, if $f\in\widetilde{C}^\alpha(\mathcal T;r)$, then  $\forall s < \alpha$,   $\exists C_{f,s} > 0$ such that 
\begin{equation}\label{cond}
	|f(\mathbf{t}_1) - f(\mathbf{t}_2)| \leq C_{f,s} \|\mathbf{t}_1 - \mathbf{t}_2\|^s, \quad \forall \mathbf{t}_1, \mathbf{t}_2 \in \mathcal{T}, \quad \text{ with } \; C_{f,s}=C_f |\alpha-s|^{-r}.
\end{equation}
By the Levy's modulus continuity for the fractional Brownian motion, see \citep[page 220]{L1995}, we have that with probability 1, the sample paths of a fractional Brownian motion with Hurst exponent $H\in(0,1)$ belong to  $\widetilde{C}^H([0,1];r)$, for any $r\geq 1/2$.   For refined results on the existence and the construction of functions with prescribed Hölder exponent, including the Weierstrass function, see \cite{DVM1998}. 
By the definitions and \citet[Theorem 2.9]{S2016}, for any $r\geq 0$, the set $\widetilde{C}^\alpha(\mathcal T;r)\cap C^\alpha (\mathcal T)$ is dense in $C^\alpha (\mathcal T)$. This shows that our classes $\widetilde{C}^\alpha(\mathcal T;r)$ are rich.

There has been some recent interest for the regularity estimation with functional data, particularly when regularity is characterized through an equality or near-equality, as is the case for the Hurst index of a fractional Brownian motion (e.g., \cite{golo1, Golovkine2021, wang2023adaptive,  kassi2023, kassi2024structural}). See also \cite{hsing2020}. However, when the regularity is defined through an inequality as in \eqref{cond}, the regularity estimation  is more challenging. To address the problem, let us consider that $\mu$ belongs to a class $\widetilde{C}^\alpha(\mathcal T;r)$ for some $\alpha \in (0,1)$ and $r\geq 0$. By definition, if Assumption (H\ref{assuH3}) holds then necessarily $\alphamu \leq \alpha$.  If $\mu$ has a constant pointwise Hölder exponenent equal to $\alphamu$ from Assumption (H\ref{assuH3}), then $\mu\in \widetilde{C}^{\alphamu} (\mathcal T;0)$.

Let $\mathcal{K}$ be a non-negative integer that will increase with the sample size,   and define
\begin{equation}
I_k = \left[\frac{k}{\mathcal{K}}, \frac{k+1}{\mathcal{K}}\right) \quad \text{and} \quad t_k = \frac{k+1/2}{\mathcal{K}} \qquad \text{for } k \in \{0, \dots, \mathcal{K}-1\}.
\end{equation}
For any vector $\mathbf{k} = (k_1, k_2, \dots, k_D)$ with integer components  in $\{0, \dots, \mathcal{K}-1\}$, define
\begin{equation}\label{def:pnt_intv}
I_{\mathbf{k}} = I_{k_1} \times I_{k_2} \times \dots \times I_{k_D}  \qquad \text{ and } \qquad \mathbf{t}_{\mathbf{k}} = (t_{k_1}, t_{k_2}, \dots, t_{k_D})\in I_{\mathbf{k}}.
\end{equation}
Finally, let
 $$
 b_{\mathbf k}=\{\sum_{\mathbf k^+} 1\}^{-1}\sum_{\mathbf k^+}\{\mu(\mathbf t_{\mathbf k}) - \mu(\mathbf t_{\mathbf k^+})\}^2,\quad \text{ for each } \quad  \mathbf k\in \mathcal X(\mathcal K),
 $$ 
with  
$$
  \mathcal X(\mathcal K):= \{\mathbf j \in \mathbb N^D:   | \mathbf j|_1\leq D( \mathcal{K} - 1)-1, |\mathbf j|_\infty \leq \mathcal K -1\} ,
$$
where $\mathbf k^+$ is any $D$-dimensional   vector with $|\mathbf k^+|_1= |\mathbf k|_1+1$ and integer components   in $\{0, \dots, \mathcal{K}-1\}$. The quantities $b_{\mathbf k}$ will be used to estimate  $\alpha_\mu$.  Note that the cardinality of the set $ \mathcal X(\mathcal K)$ is equal to $\mathcal K^D -1$.

If $\mu \in \widetilde C^{\alpha} (\mathcal T;r)$, the quantity $ b_{\mathbf k}$  has the rate  $O(\mathcal{K}^{-2s})$ for any $s < \alpha$, and, for any $\epsilon >0$, $ b_{\mathbf k}\mathcal{K}^{2(\alpha+\epsilon)} $ increases to infinity with $\mathcal{K}$. The idea is then to derive an estimate of $\alpha$ from the logarithm of an estimate of $ b_{\mathbf k}$. 
Let $\mathcal J$ be a large integer and define 
  \begin{equation}\label{def_HJ}
  	\mathcal H= \mathcal H_{\mathcal J}=  \left\{ H_j= j/\mathcal J: 1\leq j\leq \mathcal J \right\}.
  \end{equation}
Then, for any $	\mathcal H\ni H_j < \alpha$, it holds that $b_{ \mathbf k} + \mathcal{K}^{-2H_j} \sim\mathcal{K}^{-2H_j}$. This allows to derive an estimating equation for $H_j$ from $b_{\mathbf k} + \mathcal{K}^{-2H_j}$, as long as $H_j < \alpha$. In other words, for any small $\epsilon >0$, we have
\begin{equation}
 -\frac{1}{2\log(\mathcal{K})} \log\left(b_{\mathbf k} + \mathcal{K}^{-2H_j}\right) =H_j + O\left(\epsilon^{-r}\mathcal{K}^{-2(\alpha - H_j)+\epsilon}\right)=H_j \{1+o(1)\}, \quad \text{ for large } \mathcal K.
\end{equation}
Meanwhile, this estimating equation breaks down if $H_j > \alpha$, in the sense that the left side is less than the right side. Our estimation idea is then the following~:   estimate $b_{\mathbf k}+ \mathcal{K}^{-2H_j}$ from the data,  incrementally increase $H_j$ in the range $\mathcal H$, and retain the first $H_j$ for which $-\{2\log(\mathcal{K})\}^{-1} \log\left(b_{\mathbf k} + \mathcal{K}^{-2H_j}\right)$ is sufficiently far from $H_j$. 
Such a breaking point indicates that $H_{j-1}$ and $H_j$ are close to  $\alpha$. Given the choice of $\mathcal H_{\mathcal J}$, our procedure boils down to construct an estimator for the integer $j_0= \lfloor \mathcal J\times\alpha \rfloor$ and divide it by $\mathcal J$ to get the estimator of $\alpha$. Before providing the details of our procedure,  note that the definition of $\widetilde C^{\alpha} (\mathcal T;r)$ allows to consider an average of the $b_{\mathbf k}$ with respect to $\mathbf k$ in the estimating equation to improve the performance of the procedure. Finally, our estimation strategy requires a concentration bound for the estimator of $H_j$ obtained through the estimating equation when $H_j< \alpha$. We elaborate on these ideas in the following.

Let  $\mathbf k \in \mathbb N^{D}$  and, with the rule $0/0=0$,  define 
$$
F_{\mathbf k}(\mathbf T_{i,m}) = \frac{\mathbf{1}_{\{\mathbf T_{i,m} \in I_{\mathbf k}\}} }{\sum _{(j, m^\prime)} \mathbf{1}_{\{\mathbf T_{j,m^\prime} \in I_{\mathbf k}\}}},
$$
where $I_{\mathbf k}$ is the hypercube introduced in \eqref{def:pnt_intv}. Here, $F_{\mathbf k}(\mathbf T_{i,m})$ is weighting factor, accounting for the data points in  $I_{\mathbf k}$. We estimate $b_{\mathbf k}$ by
\begin{equation}\label{def_b_kgras}
\widehat b_{\mathbf k} = \frac{1}{\sum_{\mathbf k^+} 1}\sum_{\mathbf k^+}\Big\{ \sum _{(i, m)}  Y_{i,m} \{F_{\mathbf k}(\mathbf T_{i,m}) -  F_{\mathbf k^+}(\mathbf T_{i,m}) \}\Big\}^2.
\end{equation}
This estimator  leverages the differences between adjacent weighted hypercubes and captures the squared increments in the mean function $\mu$. 
 
\medskip

\begin{proposition}\label{pas_encore}
The Assumptions   (H\ref{assuH2}), (H\ref{assuH12}) and (D\ref{assuD1}) to (D\ref{assuD5}) hold true. Moreover, $\mu \in \widetilde C^{\alpha} (\mathcal T;r)$, for some $\alpha \in (0,1)$. 
	Then, for a fixed index $\mathbf k\in  \mathcal X(\mathcal K)$, and $\mathbf T_1, \mathbf T_2 $ independent copies of $\mathbf T$,  we have
	\begin{equation}
	\EE\left[\widehat b_{\mathbf k}\right] = \frac{1}{\sum_{\mathbf k^+} 1}\sum_{\mathbf k^+}\EE\left[\left\{\mu(\mathbf T_1) - \mu(\mathbf T_2)\right\}\frac{\mathbbm{1}\{ (\mathbf T_1, \mathbf T_2) \in I_{\mathbf k} \times I_{\mathbf k^+}\} }{p_{\mathbf k} p_{\mathbf k^+}} \right]^2 + \mathcal{G},
	\end{equation}
	where $p_{\mathbf k} = \mathbb{P}(\mathbf T \in I_{\mathbf k})$, $p_{\mathbf k^+} = \mathbb{P}(\mathbf T \in I_{\mathbf k^+})$. The remainder term $\mathcal{G}$ satisfies
	\begin{multline}
\left|	\mathcal{G}\right|\leq   \frac{\mathcal{K}^D}{C_0 (\overline{M} + 1)} G_{\mathbf k} \\ + \frac{\sum_{i=1}^N M_i(M_i-1)}{\overline{M} (\overline{M} - 1)} \times \frac{1}{\sum_{\mathbf k^+} 1} \sum_{\mathbf k^+}  \Big\{\gamma(\mathbf t_{\mathbf k},\mathbf  t_{\mathbf k}) +  \gamma(\mathbf t_{\mathbf k^+},\mathbf t_{\mathbf k^+}) -2 \gamma(\mathbf t_{\mathbf k},\mathbf  t_{\mathbf k^+}) \Big\},
	\end{multline}
	with
	$$
	G_{\mathbf k} = \mu^2(\mathbf t_{\mathbf k}) + \sigma^2(\mathbf t_{\mathbf k}) +   \gamma(\mathbf t_{\mathbf k}, \mathbf t_{\mathbf k}) +\frac{1}{\sum_{\mathbf k^+} 1}\sum_{\mathbf k^+}\{ \mu^2(\mathbf t_{\mathbf k^+}) + \sigma^2(\mathbf t_{\mathbf k^+})+\gamma(\mathbf  t_{\mathbf k^+}, \mathbf  t_{\mathbf k^+})\}.
	$$
\end{proposition}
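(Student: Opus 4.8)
\medskip
\noindent\textbf{Proof strategy.}
The plan is to condition on the design $\tobs$ and exploit the conditional covariance structure \eqref{structure_dep}. Writing $N_{\mathbf k}=\operatorname{card}\{(i,m):\mathbf T_{i,m}\in I_{\mathbf k}\}$, the weighting is arranged so that, with the convention $0/0=0$, $\sum_{(i,m)}Y_{i,m}F_{\mathbf k}(\mathbf T_{i,m})$ equals the cell average $\overline Y_{\mathbf k}:=N_{\mathbf k}^{-1}\sum_{(i,m):\mathbf T_{i,m}\in I_{\mathbf k}}Y_{i,m}$ (equal to $0$ when $N_{\mathbf k}=0$), so that $\widehat b_{\mathbf k}=\{\sum_{\mathbf k^+}1\}^{-1}\sum_{\mathbf k^+}(\overline Y_{\mathbf k}-\overline Y_{\mathbf k^+})^2$. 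Decomposing $Y_{i,m}=\mu(\mathbf T_{i,m})+\eta_{i,m}$ with $\eta_{i,m}=\{X_i-\mu\}(\mathbf T_{i,m})+\sigma(\mathbf T_{i,m})e_{i,m}$, splitting $\overline Y_{\mathbf k}=\overline\mu_{\mathbf k}+\overline\eta_{\mathbf k}$ into the corresponding cell averages, and using $\EE[\eta_{i,m}\mid\tobs]=0$ together with (D\ref{assuD1})--(D\ref{assuD3}), the cross term vanishes and
\[
\EE[\widehat b_{\mathbf k}\mid\tobs]=\frac{1}{\sum_{\mathbf k^+}1}\sum_{\mathbf k^+}\Big\{(\overline\mu_{\mathbf k}-\overline\mu_{\mathbf k^+})^2+\EE\big[(\overline\eta_{\mathbf k}-\overline\eta_{\mathbf k^+})^2\mid\tobs\big]\Big\}.
\]
I would then take expectations and treat the two summands separately, using two elementary facts throughout: the volume of $I_{\mathbf k}$ is $\mathcal K^{-D}$, hence $p_{\mathbf k}\ge C_0\mathcal K^{-D}$ by (H\ref{assuH12}); and, for $N\sim\mathrm{Bin}(\overline M,p)$, $\EE[\mathbf 1\{N\ge1\}/N]\le 2\EE[1/(1+N)]\le 2/\{(\overline M+1)p\}$ and $\PP(N=0)=(1-p)^{\overline M}\le e^{-\overline M p}\le(\overline M p)^{-1}$, so both empty-cell and inverse-count contributions are $O(\mathcal K^D/\overline M)$.

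For the bias summand, condition further on $(N_{\mathbf k},N_{\mathbf k^+})$: when both counts are positive, the points in $I_{\mathbf k}$ and those in $I_{\mathbf k^+}$ are mutually independent and i.i.d.\ from the restricted densities $f_{\mathbf T}\mathbf 1_{I_{\mathbf k}}/p_{\mathbf k}$, $f_{\mathbf T}\mathbf 1_{I_{\mathbf k^+}}/p_{\mathbf k^+}$, so $\EE[\overline\mu_{\mathbf k}\mid N_{\mathbf k}]=\mu_{\mathbf k}\mathbf 1\{N_{\mathbf k}\ge1\}$ with $\mu_{\mathbf k}=\EE[\mu(\mathbf T)\mid\mathbf T\in I_{\mathbf k}]$ and $\EE[\overline\mu_{\mathbf k}^2\mid N_{\mathbf k}]=\mathbf 1\{N_{\mathbf k}\ge1\}\{\mu_{\mathbf k}^2+N_{\mathbf k}^{-1}\Var(\mu(\mathbf T)\mid\mathbf T\in I_{\mathbf k})\}$. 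Expanding $\EE[(\overline\mu_{\mathbf k}-\overline\mu_{\mathbf k^+})^2]$ and collecting terms, the leading contribution is $\{\sum_{\mathbf k^+}1\}^{-1}\sum_{\mathbf k^+}(\mu_{\mathbf k}-\mu_{\mathbf k^+})^2$, which is exactly the announced main term because, $\mathbf T_1,\mathbf T_2$ being independent, $\EE[\{\mu(\mathbf T_1)-\mu(\mathbf T_2)\}\mathbf 1\{(\mathbf T_1,\mathbf T_2)\in I_{\mathbf k}\times I_{\mathbf k^+}\}/(p_{\mathbf k}p_{\mathbf k^+})]=\mu_{\mathbf k}-\mu_{\mathbf k^+}$. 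The remainder consists of the empty-cell factors $\PP(N_{\mathbf k}=0)$, $\PP(N_{\mathbf k^+}=0)$ multiplying $\mu_{\mathbf k}^2,\mu_{\mathbf k^+}^2,\mu_{\mathbf k}\mu_{\mathbf k^+}$, and of the $N_{\mathbf k}^{-1}\Var(\mu\mid I_{\mathbf k})$ terms; all are $\lesssim \mathcal K^D/\{C_0(\overline M+1)\}$ times $\mu^2$ and within-cell oscillations of $\mu$, and after replacing $\mu_{\mathbf k}$ by $\mu(\mathbf t_{\mathbf k})$ by continuity they are absorbed into the $\mu^2(\mathbf t_{\mathbf k})+\{\sum_{\mathbf k^+}1\}^{-1}\sum_{\mathbf k^+}\mu^2(\mathbf t_{\mathbf k^+})$ part of $G_{\mathbf k}$.

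For the variance summand, expand $\EE[(\overline\eta_{\mathbf k}-\overline\eta_{\mathbf k^+})^2\mid\tobs]$ via \eqref{structure_dep} and the disjointness of $I_{\mathbf k},I_{\mathbf k^+}$: only same-trajectory pairs survive, producing a \emph{diagonal} piece $N_{\mathbf k}^{-1}\overline{(\gamma+\sigma^2)}_{\mathbf k}+N_{\mathbf k^+}^{-1}\overline{(\gamma+\sigma^2)}_{\mathbf k^+}$ (cell averages of $\mathbf t\mapsto\gamma(\mathbf t,\mathbf t)+\sigma^2(\mathbf t)$) and an \emph{off-diagonal} piece $\sum_i\{n_{i,\mathbf k}(n_{i,\mathbf k}-1)N_{\mathbf k}^{-2}\overline\gamma_{\mathbf k\mathbf k}+n_{i,\mathbf k^+}(n_{i,\mathbf k^+}-1)N_{\mathbf k^+}^{-2}\overline\gamma_{\mathbf k^+\mathbf k^+}-2n_{i,\mathbf k}n_{i,\mathbf k^+}(N_{\mathbf k}N_{\mathbf k^+})^{-1}\overline\gamma_{\mathbf k\mathbf k^+}\}$, where $n_{i,\mathbf k}=\operatorname{card}\{m:\mathbf T_{i,m}\in I_{\mathbf k}\}$ and the $\overline\gamma$'s are averages of $\gamma$ over the corresponding product sub-cells. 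Taking expectations, the diagonal piece is $O(\mathcal K^D/\overline M)$ times values of $\gamma(\cdot,\cdot)+\sigma^2(\cdot)$ near $\mathbf t_{\mathbf k},\mathbf t_{\mathbf k^+}$ (by the inverse-count bound and the Hölder continuity (D\ref{assuD4})--(D\ref{assuD5})), supplying the remaining $\sigma^2(\mathbf t_{\mathbf k})+\gamma(\mathbf t_{\mathbf k},\mathbf t_{\mathbf k})$ terms of $G_{\mathbf k}$. For the off-diagonal piece, condition on the cell counts, use $\EE[\sum_i n_{i,\mathbf k}(n_{i,\mathbf k}-1)]=\sum_i M_i(M_i-1)p_{\mathbf k}^2$ and $\EE[\sum_i n_{i,\mathbf k}n_{i,\mathbf k^+}]=\sum_i M_i(M_i-1)p_{\mathbf k}p_{\mathbf k^+}$, and combine with the concentration $N_{\mathbf k}=\overline M p_{\mathbf k}\{1+o(1)\}$ on a high-probability event (on its complement the ratios are bounded by $1$ while the probability is $O(\mathcal K^D/\overline M)$); this yields the leading term $\{\sum_{\mathbf k^+}1\}^{-1}\sum_{\mathbf k^+}\frac{\sum_i M_i(M_i-1)}{\overline M(\overline M-1)}\{\gamma(\mathbf t_{\mathbf k},\mathbf t_{\mathbf k})+\gamma(\mathbf t_{\mathbf k^+},\mathbf t_{\mathbf k^+})-2\gamma(\mathbf t_{\mathbf k},\mathbf t_{\mathbf k^+})\}$, all errors being again $O(\mathcal K^D/\overline M)$ and folded into the first summand of the $\mathcal G$ bound.

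I expect the main obstacle to be this off-diagonal variance piece: handling the random denominators $N_{\mathbf k},N_{\mathbf k^+}$ and the dependence of $n_{i,\mathbf k}$ on $N_{\mathbf k}=\sum_j n_{j,\mathbf k}$ uniformly across the two regimes --- $\overline M p_{\mathbf k}\gg1$, where a Chernoff/Bernstein bound makes $N_{\mathbf k}$ sharply concentrated, and $\overline M p_{\mathbf k}\lesssim1$ (equivalently $\mathcal K^D\gtrsim\overline M$), where cells are frequently empty but then $\mathcal K^D/\overline M\gtrsim1$ so the crude bound $0\le n_{i,\mathbf k}(n_{i,\mathbf k}-1)/N_{\mathbf k}^2\le1$ already suffices --- and then carrying out the bookkeeping so that every error term (within-cell Hölder corrections of $\gamma,\sigma^2,\mu$; deviations of $N_{\mathbf k}$ from $\overline M p_{\mathbf k}$; empty-cell contributions) is dominated by $\mathcal K^D G_{\mathbf k}/\{C_0(\overline M+1)\}$.
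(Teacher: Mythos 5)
Your proposal is correct and follows essentially the same route as the paper: expand the squared difference of the weighted cell averages, compute expectations using the multinomial cell-count structure and the conditional covariance structure \eqref{structure_dep}, identify the main term as the squared difference of conditional cell means of $\mu$, and absorb the empty-cell and inverse-count contributions (of order $\mathcal K^D/\overline M$) into $\mathcal G$. The only technical divergence is that the paper handles the random denominators $N_{\mathbf k}, N_{\mathbf k^+}$ in the cross term via exact multinomial identities (yielding the factor $1-\rho_3$ in its Lemma~\ref{lem2_pas_encore}) rather than via the concentration-plus-crude-bound argument you sketch, but both yield a remainder of the stated order.
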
 

\medskip

If $\mu \in \widetilde C^{\alpha} (\mathcal T;r)$, the quantity $\EE[\widehat b_{\mathbf k}] - \mathcal G$ 
appearing in Proposition \ref{pas_encore}, is expected 
to have the  rate  $O(\mathcal{K}^{-2s})$,  $\forall s < \alpha$, and  not decrease faster than $\mathcal{K}^{-2(\alpha+\epsilon)}$, for any $\epsilon >0$.
Let 
\begin{equation}\label{g_cal}
	g(\mathcal{K}) = (\mathcal{K}^D-1)^{-1} \sum_{\mathbf k \in \mathcal X(\mathcal K)} \EE\big[\; \widehat{b}_{\mathbf k}\big].
\end{equation}
For $1\leq j\leq \mathcal J $ such that $H_j < \alpha$, let
\begin{equation}\label{proxy_Hj}
\widetilde H_j =-\frac{1}{2} \log(\mathcal K )^{-1}\log\left(	g(\mathcal{K}) + \mathcal  K^{-2H_j}\right),
\end{equation}
and the  estimator
\begin{equation}\label{estimator_Hj}
\widehat H_j =-\frac{1}{2} \log(\mathcal K )^{-1}\log\Bigg(\frac{1}{\mathcal K^D-1} \sum_{\mathbf k \in \mathcal X(\mathcal K)}\widehat b_{\mathbf k}+ \mathcal  K^{-2H_j}\Bigg).
\end{equation}
To understand the role of $\widetilde H_j $, using the inequality $\log(1+y)\leq y$, $\forall y\geq 0$, we get
    \begin{equation}\label{bias_Hj}
	|H_j - \widetilde{H}_j| \leq\frac{1}{2\log(\mathcal K)} \mathcal K^{2H_j}g(\mathcal{K}),
\end{equation}
with $g(\mathcal{K})$ in \eqref{g_cal}.
 Since $\mu \in\widetilde C^{\alpha}(\mathcal T;r)$,   for any $0<s<\alpha$, by Proposition \ref{pas_encore} we have
\begin{equation}\label{qaz35}
|H_j - \widetilde{H}_j| \lesssim\frac{1}{\log(\mathcal K)}\left\{ \mathcal K^{2H_j} \mathcal N^{-1} + \mathcal K^{D+2H_j}(\mathcal N \mathfrak m)^{\; -1} + (\alpha - s)^{-r} \mathcal K^{2(H_j- s)}\right\}, 
\end{equation}
where $\mathcal N^{-1}= \overline M ^{\;-2}\sum_{i=1}^N  M_i^2$ and  $\mathfrak m= \overline M ^{\;-1} \sum_{i=1}^N  M_i^2 $. Thus, $ \widetilde{H}_j$ is a non-random proxy for $H_j$ which approaches the target if $\mathcal K$ increases at a suitable rate depending on sample size, provided that $H_j< \alpha$. Meanwhile, $ \widetilde{H}_j$ stays away from $H_j$ if $H_j > \alpha$. In conclusion,  $ \widetilde{H}_j$ can be used to identify a small neighborhood of $\alpha$. The remaining part is the control of the difference $\widehat H_j - \widetilde{H}_j$, which will be done using the following subgaussianity assumption. 

\medskip

\begin{assumptionD}
	
	\item\label{assuD6} Positive constants  $\mathfrak A$ and $\mathfrak B$ exist such that $\forall p>1$
	$$
	\sup_{\mathbf t\in \mathcal T}\EE\left[\{X(\mathbf t)-\mu(\mathbf t)\}^{p} \right]\leq \mathfrak A^{p} p^{\frac{p}{2}},\quad \EE|\varepsilon| ^p\leq \mathfrak B^{p} p^{\frac{p}{2}}.
	$$ 
	
		\item\label{zerport_main} Constants $\underline C_M,\overline C_M>0$ exist such that $\underline C_M \mathfrak m\leq M_i \leq \overline C_M \mathfrak m$, $ \forall 1\leq i \leq N$.	
		
\end{assumptionD}

\medskip

\begin{proposition}\label{pour_demain}
    Assume that conditions (H\ref{assuH2}), (H\ref{assuH12}) and (D\ref{assuD1}) to (D\ref{zerport_main}) are satisfied.   
    Then, for any $\eta>0$, 
\begin{multline}\label{zertnada}
\mathbb P \left(|\widehat H_j - \widetilde H_j|\geq \eta\right)\leq 2\exp \left ( -  \frac{\mathfrak k_1^2 (\mathcal K^{D}-1) \mathcal N}{ \log^2(N)+\mathfrak k_1^2\mathcal N g(\mathcal K)}\times \frac{2\eta^2 g(\mathcal K)\log^2(\mathcal K) \mathcal K^{-4\eta}}{ 1 + \eta \log(\mathcal K)\mathcal K^{-2\eta}  } \right)\\
+\frac{2}{N}+ 2 N\exp\left(- 2C_0^2\sqrt{\underline C_M}\mathcal K^{-2D}\mathfrak m^{1/2}\right).
\end{multline}
where $\mathfrak k$ is a positive constant and  $g(\mathcal{K})$ defined as in \eqref{g_cal}. 
\end{proposition}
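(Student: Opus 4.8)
Set $\widehat g := (\mathcal K^{D}-1)^{-1}\sum_{\mathbf k\in\mathcal X(\mathcal K)}\widehat b_{\mathbf k}$, so that $\widehat H_j = -\{2\log(\mathcal K)\}^{-1}\log(\widehat g + \mathcal K^{-2H_j})$ while, by \eqref{g_cal} and \eqref{proxy_Hj}, $\widetilde H_j = -\{2\log(\mathcal K)\}^{-1}\log(g(\mathcal K) + \mathcal K^{-2H_j})$ with $g(\mathcal K)=\EE[\widehat g]$. The first step is to reduce the statement to a concentration inequality for $\widehat g$ about $g(\mathcal K)$: the event $\{|\widehat H_j-\widetilde H_j|\ge\eta\}$ forces the ratio $(\widehat g+\mathcal K^{-2H_j})/(g(\mathcal K)+\mathcal K^{-2H_j})$ to leave $[\mathcal K^{-2\eta},\mathcal K^{2\eta}]$, and using $1-\mathcal K^{-2\eta}\ge 2\eta\log(\mathcal K)\mathcal K^{-2\eta}$ together with $\mathcal K^{2\eta}-1\ge 1-\mathcal K^{-2\eta}$ this entails $|\widehat g-g(\mathcal K)|\ge t_\eta$ with $t_\eta := 2\eta\log(\mathcal K)\mathcal K^{-2\eta}\,g(\mathcal K)$. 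Since $t_\eta^2/g(\mathcal K)$ and $t_\eta/g(\mathcal K)$ reproduce, up to absolute constants, the factor $\eta^2 g(\mathcal K)\log^2(\mathcal K)\mathcal K^{-4\eta}$ and the term $\eta\log(\mathcal K)\mathcal K^{-2\eta}$ appearing in \eqref{zertnada}, it suffices to prove a Bernstein-type bound of the form $\PP(|\widehat g-g(\mathcal K)|\ge t)\le 2\exp\!\big(-c\,(\mathcal K^{D}-1)\mathcal N\,t^2\{(\log^2(N)+c\,\mathcal N g(\mathcal K))(g(\mathcal K)+t)\}^{-1}\big)+2/N+2N\exp(-2C_0^2\sqrt{\underline C_M}\,\mathcal K^{-2D}\mathfrak m^{1/2})$ and then substitute $t=t_\eta$.

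Next I would introduce two high-probability events. Let $\mathcal E_1$ be the event that for every hypercube $I_{\mathbf k}$, $\mathbf k\in\mathcal X(\mathcal K)$, and its neighbours $I_{\mathbf k^+}$, and for every trajectory $i$, the occupation count $\#\{m\le M_i:\mathbf T_{i,m}\in I_{\mathbf k}\}$ stays within a prescribed deviation of its mean $M_i p_{\mathbf k}$. Using $p_{\mathbf k}\ge C_0\mathcal K^{-D}$ from (H\ref{assuH12}), $M_i\ge \underline C_M\mathfrak m$ from (D\ref{zerport_main}), Hoeffding's inequality for these binomial counts with a deviation of order $\mathcal K^{-D}(\underline C_M\mathfrak m)^{-1/4}$, and a union bound over the $N$ trajectories (the union over the $\mathcal K^{D}$ cubes being absorbed by the choice of deviation level), one gets $\PP(\mathcal E_1^c)\le 2N\exp(-2C_0^2\sqrt{\underline C_M}\,\mathcal K^{-2D}\mathfrak m^{1/2})$, the third term of \eqref{zertnada}; on $\mathcal E_1$ the random denominators defining $F_{\mathbf k}$ in \eqref{def_b_kgras} are bounded below by a constant multiple of $\overline M\,\mathcal K^{-D}$. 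Let $\mathcal E_2$ be the event $\max_{(i,m)}\{|X_i(\mathbf T_{i,m})-\mu(\mathbf T_{i,m})|+|\varepsilon_{i,m}|\}\le\kappa\sqrt{\log N}$ for a suitable $\kappa$; the subgaussian moment bounds of (D\ref{assuD6}) and a union bound over the data points give $\PP(\mathcal E_2^c)\le 2/N$, and the truncation bias $g(\mathcal K)-\EE[\widehat g\,\mathbf 1_{\mathcal E_2}]$ is negligible. On $\mathcal E_2$ every $Y_{i,m}$ is bounded by $\|\mu\|_\infty+\kappa\sqrt{\log N}$, and since $\sum_{(i,m)}F_{\mathbf k}(\mathbf T_{i,m})=1$, on $\mathcal E_1\cap\mathcal E_2$ each summand of $\widehat b_{\mathbf k}$ is a squared difference of two bounded block averages, so $\widehat b_{\mathbf k}\lesssim\log N$; this is the origin of the $\log^2(N)$ in the Bernstein denominator.

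The core step is then a variance-adaptive concentration inequality for $\widehat g$ on $\mathcal E_1\cap\mathcal E_2$, exploiting that by (D\ref{assuD1})--(D\ref{assuD3}) the trajectory blocks $D_i=(X_i,(\mathbf T_{i,m})_m,(\varepsilon_{i,m})_m)$, $i=1,\dots,N$, are independent. Conditionally on the design $\tobs$ the weights $F_{\mathbf k}$ are fixed, and one decomposes $\widehat b_{\mathbf k}=(\#\{\mathbf k^+\})^{-1}\sum_{\mathbf k^+}(c_{\mathbf k}+S_{\mathbf k})^2$ into a deterministic part $c_{\mathbf k}$ built from the increments of $\mu$ (whose average over $\mathbf k$ gives the leading term of $g(\mathcal K)$ by Proposition \ref{pas_encore}), a part $2c_{\mathbf k}S_{\mathbf k}$ linear in the residuals, and a quadratic part $S_{\mathbf k}^2-\EE[S_{\mathbf k}^2\mid\tobs]$, where $S_{\mathbf k}=\sum_i L_{i,\mathbf k}$ with $L_{i,\mathbf k}$ the conditionally independent, mean-zero contribution of trajectory $i$. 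Using the within-trajectory covariance structure \eqref{structure_dep}, the bounded-below denominators from $\mathcal E_1$, and the bounded overlap of the cubes (so that the sum over $\mathbf k\in\mathcal X(\mathcal K)$ contributes the $(\mathcal K^{D}-1)$ factor and a conditional variance of order $(\mathcal K^{D}-1)^{-1}\{g(\mathcal K)^2+g(\mathcal K)\log^2(N)/\mathcal N\}$), a Bernstein/bounded-differences inequality with Bernstein-type improvement, applied to $\widehat g$ as a function of $D_1,\dots,D_N$, yields the displayed bound; since $\EE[\widehat g]=g(\mathcal K)$ no further centering is needed, and adding $\PP(\mathcal E_1^c)+\PP(\mathcal E_2^c)$ completes it.

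I expect the main obstacle to be exactly this last step: obtaining the variance proxy and effective boundedness of $\widehat g$ with precisely the dependence $\{\log^2(N)+\mathfrak k_1^2\mathcal N g(\mathcal K)\}^{-1}$ and the gain $(\mathcal K^{D}-1)\mathcal N$. Because each $\widehat b_{\mathbf k}$ is a quadratic functional of the data with random, design-dependent coefficients (the $F_{\mathbf k}$), and the cubes $I_{\mathbf k}$ interact through shared neighbours, the averaging over $\mathbf k$ that produces the $(\mathcal K^{D}-1)$ factor cannot be decoupled from the trajectory-level independence, so the decomposition into deterministic, linear and quadratic parts and the accompanying second-moment bookkeeping — including controlling the fluctuation of $\EE[\widehat g\mid\tobs]$ over the design on $\mathcal E_1$ — must be carried out jointly and with care.
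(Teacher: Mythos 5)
Your first step coincides exactly with the paper's: the event $\{|\widehat H_j-\widetilde H_j|\ge\eta\}$ is rewritten as $\widehat g-g(\mathcal K)\ge(\mathcal K^{2\eta}-1)(g(\mathcal K)+\mathcal K^{-2H_j})$ or $\widehat g-g(\mathcal K)\le(\mathcal K^{-2\eta}-1)(g(\mathcal K)+\mathcal K^{-2H_j})$, and the same elementary inequalities ($\log(1+x)\le x$, $1-\mathcal K^{-2\eta}=\mathcal K^{-2\eta}(\mathcal K^{2\eta}-1)$, monotonicity of $x\mapsto x^2/(a+x)$) convert the target into a deviation bound for $\widehat g$ at level $t_\eta\asymp\eta\log(\mathcal K)\mathcal K^{-2\eta}g(\mathcal K)$. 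Where you diverge is in how the deviation bound itself is obtained. The paper does not prove a single Bernstein inequality with the denominator $\log^2(N)+\mathfrak k_1^2\mathcal N g(\mathcal K)$ built in; it invokes Lemma \ref{concentration22}, which carries a \emph{free truncation level} $\varrho$: a Bernstein term $2\exp(-(\mathcal K^D-1)\eta^2/\{\varrho(2g(\mathcal K,\varrho)+\eta)\})$ for the truncated variables $\widehat b_{\mathbf k}\mathbbm 1\{\widehat b_{\mathbf k}\le\varrho\}$, a truncation-failure term $2\exp(-\mathfrak k_1\sqrt{\mathcal N(\varrho-g(\mathcal K,\varrho))})$ coming from the subgaussianity in (D\ref{assuD6}), and the design term $2N\exp(-2C_0^2\sqrt{\underline C_M}\mathcal K^{-2D}\mathfrak m^{1/2})$. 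The final form of \eqref{zertnada} is then produced by \emph{optimizing} $\varrho=\log^2(N)\{\mathfrak k_1^2\mathcal N\}^{-1}+g(\mathcal K)$, which simultaneously makes the truncation-failure term equal to $2/N$ and injects $\log^2(N)+\mathfrak k_1^2\mathcal N g(\mathcal K)$ into the Bernstein denominator.

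This is where your sketch has a genuine gap. You attribute the $\log^2(N)$ to the bound $\widehat b_{\mathbf k}\lesssim\log N$ valid on your event $\mathcal E_2$ (raw observations bounded by $\kappa\sqrt{\log N}$). That sup bound is too weak by a factor of order $\mathcal N/\log N$: a range parameter $\varrho\asymp\log N$ in a Bernstein denominator yields $\exp(-c(\mathcal K^D-1)t^2/\{\log(N)(g(\mathcal K)+t)\})$ and loses the crucial multiplicative gain $\mathcal N$ in the exponent. What is actually needed is that each $\widehat b_{\mathbf k}$ be bounded, outside an event of probability $O(1/N)$, by $g(\mathcal K)+\log^2(N)/\mathcal N$ --- i.e., one must first show that the stochastic fluctuation of each block average around its mean is of order $\mathcal N^{-1/2}$ up to logarithms (this is exactly the content of the condition $\varrho>g(\mathcal K,\varrho)$ and of the term $\exp(-\mathfrak k_1\sqrt{\mathcal N(\varrho-g(\mathcal K,\varrho))})$ in Lemma \ref{concentration22}), and only then apply Bernstein across the $\mathcal K^D-1$ cubes. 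You do name the correct variance proxy $(\mathcal K^D-1)^{-1}\{g^2+g\log^2(N)/\mathcal N\}$ in your ``core step,'' and you candidly flag it as the unresolved obstacle, but the route you propose to it (truncation of the raw data plus boundedness of the block averages) does not deliver it; the per-cube concentration and the cross-cube Bernstein step (which also has to handle the dependence of the $\widehat b_{\mathbf k}$ across $\mathbf k$ through the shared trajectories) is precisely the content of the Supplement's proof of Lemma \ref{concentration22} and cannot be bypassed.
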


\medskip

The last term in the bound \eqref{zertnada} converge to zero if $\mathcal K^{2D} \mathfrak m^{-1/2}= o(\log(N))$.
The optimal choice of $\mathcal K$ depends on the unknown regularity of the mean function. To avoid a two-step procedure, it suffices to choose  $ \mathcal K_\tau= \exp( \log(N) ^\tau)$ with $\tau \in (0,1)$. It is worth noting that   the condition $ \mathcal K_\tau^{2D} \mathfrak m^{-1/2}= o(\log(N))$ is implied by $ \log(N)^\tau \lesssim \log(\mathfrak m)$.
In terms of asymptotic behavior,  $\mathcal K_\tau$ lies between any positive power of $ \log(N)$ and any positive power of $N$.
Moreover, by assumption (D\ref{zerport_main}), we have $ \underline C_M / \overline{C}_M\leq \mathcal N/ N \leq \overline C_M/\underline {C}_M$. Thus, for any $0<s<\alpha$, by \eqref{qaz35} and \eqref{zertnada}, we get
\begin{align}
 \quad  H_j - \widetilde H_j = & \; O\Big( \log(N)^{-\tau} \left\{\mathcal K_\tau^{2(H_j-s)} (\alpha-s)^{-r}
 + \mathcal N^{-1} \mathcal K_\tau^{2H_j}
 \right\} \Big),\\
 \widehat H_j - \widetilde H_j= & \; O_{\mathbb P}\left ( \{ \log(N)^{2-2\tau}/( g(\mathcal K_\tau) N\mathcal K_\tau^{D}) + 1/( \mathcal K_\tau^{D} \log(N)^\tau)\}^{1/2}
 \right).\label{bias_variance_Hj}
\end{align} 
Therefore, for $H_j< \alpha$, we get
$$
 \widehat H_j - \widetilde H_j= o_{\mathbb P}(1),\qquad   H_j - \widetilde H_j =o(1),\qquad \text{ and thus  } \qquad
\widehat H_j -  H_j= o_{\mathbb P}(1) ,
$$ 
provided $ H_j < \alpha$. Note that, for $\tau^\prime  \in (0,\tau)$, by the rates in \eqref{bias_variance_Hj}, we have
\begin{equation}\label{guarantee_J}
\widehat H_j - H_j = o_{\PP}(\log(N)^{-\tau^\prime}),
\end{equation}
provided $ H_j < \alpha$. Therefore, our estimator of $j_0= \lfloor \mathcal J\times\alpha \rfloor$ is 
\begin{equation}\label{j0_hat45}
 \widehat j_0 = \max \left\{ j: |\widehat H_j - H_j| \leq  \log(N)^{-\tau^\prime}\right\},\qquad \text{for some } \tau^\prime \in (0,1). 
\end{equation}
 By definition, $ 0\leq \alpha - j_0/\mathcal J < \mathcal J^{-1}$. Next, we provide a concentration bound for  $\widehat j_0/ \mathcal J$.

\medskip

\begin{proposition}\label{pour_auj}
	Suppose the conditions of Proposition \ref{pour_demain} hold true and $\mu\in\widetilde C^{\alpha} (\mathcal T;r)$, for some $r\geq 0$. Consider the integer $\mathcal K_\tau= \lfloor \exp( \log^\tau(N) )\rfloor$ with $\tau \in (0,1)$. Moreover, let $\mathcal J=\log(N)^{r^\prime}$ with $0< r^\prime <(\tau-\tau^\prime )/r$, where $\mathcal J$ is the integer   defining $\mathcal H_J$ in \eqref{def_HJ}. Let 
	$\widehat j_0$ be the integer defined in \eqref{j0_hat45} ($0<\tau^\prime<\tau$). Then, for $N$ sufficiently large we have 
\begin{multline}
\mathbb P\left( \left| \frac{j_0}{\mathcal J} - \frac{\widehat j_0}{\mathcal J}\right| \geq \mathcal J^{-1}\right) \leq\frac{2(\mathcal J-1)}{N}+ 2 (\mathcal J-1)N\exp\left(- 2C_0^2\sqrt{\underline C_M}\mathcal K_\tau^{-2D}\mathfrak m^{1/2}\right)\\
+ 2(\mathcal J-1)\exp \left ( -  \mathfrak c \log(N)^{2\tau-2\tau^\prime} \exp(D\log(N)^\tau - 2\log(N)^{\tau - \tau^\prime}) \right),
\end{multline}
where $\mathfrak c>0$ is some constant  and  $g(\mathcal{K}_\tau)$ is defined in \eqref{g_cal}. 
\end{proposition}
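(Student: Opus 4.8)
The plan is to prove that $\widehat j_0=j_0$ with the stated probability. Since $j_0/\mathcal J$ and $\widehat j_0/\mathcal J$ both lie on the grid $\{0,1/\mathcal J,\dots,1\}$, the event $\{|j_0/\mathcal J-\widehat j_0/\mathcal J|\geq \mathcal J^{-1}\}$ equals $\{\widehat j_0\neq j_0\}$. From the definition \eqref{j0_hat45} of $\widehat j_0$ as a maximum, $\{\widehat j_0<j_0\}\subseteq\{|\widehat H_{j_0}-H_{j_0}|> \log(N)^{-\tau'}\}$ and $\{\widehat j_0>j_0\}\subseteq\bigcup_{j:\,j_0<j\leq\mathcal J}\{|\widehat H_j-H_j|\leq \log(N)^{-\tau'}\}$, so that $\PP(\widehat j_0\neq j_0)$ is bounded by a sum over at most $\mathcal J-1$ events indexed by $j\neq j_0$ (for $N$ large $j_0\geq 1$, and the index $j=\mathcal J$, with $H_{\mathcal J}=1$, is treated like the others). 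I would bound each such event by the right-hand side of \eqref{zertnada} and conclude by a union bound, which is exactly what produces the factor $2(\mathcal J-1)$ in front of each of the three terms.

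For the underestimation event I would write $|\widehat H_{j_0}-H_{j_0}|\leq|\widehat H_{j_0}-\widetilde H_{j_0}|+|\widetilde H_{j_0}-H_{j_0}|$ with $\widetilde H_{j_0}$ as in \eqref{proxy_Hj}. Since $H_{j_0}=j_0/\mathcal J\leq\alpha$, the deterministic proxy error is controlled by \eqref{bias_Hj}--\eqref{qaz35} and Proposition \ref{pas_encore}: with $\mathcal K=\mathcal K_\tau=\lfloor\exp(\log^\tau(N))\rfloor$ one has $g(\mathcal K_\tau)\asymp\mathcal K_\tau^{-2\alpha}$ up to sub-polynomial (powers of $\log\mathcal K_\tau$) factors, while $\mathcal N^{-1}\asymp N^{-1}$ and $\mathfrak m\asymp M_i$ by (D\ref{zerport_main}); hence $|\widetilde H_{j_0}-H_{j_0}|=O(\log\log(N)/\log^\tau(N))=o(\log(N)^{-\tau'})$ because $\tau>\tau'$, and this is $<\tfrac12\log(N)^{-\tau'}$ for $N$ large. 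Thus $\{\widehat j_0<j_0\}\subseteq\{|\widehat H_{j_0}-\widetilde H_{j_0}|>\tfrac12\log(N)^{-\tau'}\}$, and Proposition \ref{pour_demain} with $j=j_0$, $\eta\asymp\log(N)^{-\tau'}$ and $\mathcal K=\mathcal K_\tau$ applies; simplifying the exponent of the leading term of \eqref{zertnada} via $g(\mathcal K_\tau)\asymp\mathcal K_\tau^{-2\alpha}$, $\underline C_M/\overline C_M\leq\mathcal N/N\leq\overline C_M/\underline C_M$ and $\log\mathcal K_\tau\asymp\log^\tau(N)$ gives one copy of the three claimed terms. Incidentally this shows $j_0$ belongs to the defining set of $\widehat j_0$ with high probability, so $\widehat j_0$ is well defined.

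For the overestimation event I would use the breakdown of the estimating equation for $H_j>\alpha$. For $j>j_0$ we have $H_j=j/\mathcal J\geq(j_0+1)/\mathcal J>\alpha$, and from \eqref{proxy_Hj} and $g(\mathcal K_\tau)+\mathcal K_\tau^{-2H_j}\geq g(\mathcal K_\tau)$ we get $\widetilde H_j\leq-\{2\log\mathcal K_\tau\}^{-1}\log g(\mathcal K_\tau)$. A lower bound $g(\mathcal K_\tau)\gtrsim\mathcal K_\tau^{-2(\alpha+\epsilon)}$, valid for every fixed $\epsilon>0$ and $N$ large because $\mu\in\widetilde C^{\alpha}(\mathcal T;r)$ has constant pointwise Hölder exponent $\alpha$ (and the remainder $\mathcal G$ in Proposition \ref{pas_encore} is negligible next to $\mathcal K_\tau^{-2\alpha}$ under the chosen rates), then forces $\widetilde H_j\leq\alpha+o(1)$, hence a strictly positive gap $H_j-\widetilde H_j\geq(H_j-\alpha)-o(1)$. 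The choice $\mathcal J=\log(N)^{r'}$ with $r'<(\tau-\tau')/r$ is exactly the inequality that makes this gap exceed $2\log(N)^{-\tau'}$ for $N$ large, once the grid spacing $\mathcal J^{-1}$ of $\mathcal H_{\mathcal J}$ and the sub-polynomial slack of $g(\mathcal K_\tau)$ are matched against the tolerance $\log(N)^{-\tau'}$. Consequently $|\widehat H_j-H_j|\leq\log(N)^{-\tau'}$ implies $|\widehat H_j-\widetilde H_j|\geq(H_j-\widetilde H_j)-\log(N)^{-\tau'}>\log(N)^{-\tau'}$, Proposition \ref{pour_demain} applies to each such $j$ (again with $\eta\asymp\log(N)^{-\tau'}$, $\mathcal K=\mathcal K_\tau$), and a union bound over the $\leq\mathcal J-1$ admissible indices yields the second copy of the three terms; in particular the leading exponential of \eqref{zertnada}, after inserting $g(\mathcal K_\tau)\asymp\mathcal K_\tau^{-2\alpha}$, $\mathcal N\asymp N$ and $\eta\asymp\log(N)^{-\tau'}$, becomes the third term $2(\mathcal J-1)\exp(-\mathfrak c\,\log(N)^{2\tau-2\tau'}\exp(D\log(N)^\tau-2\log(N)^{\tau-\tau'}))$.

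The hard part will be the overestimation step: upgrading the qualitative statement ``the estimating equation breaks down for $H_j>\alpha$'' into a quantitative lower bound on $H_j-\widetilde H_j$ that holds uniformly over $j\in\{j_0+1,\dots,\mathcal J\}$ and still beats $\log(N)^{-\tau'}$. This needs a two-sided control of $g(\mathcal K_\tau)$ --- which equals $\mathcal K_\tau^{-2\alpha}$ only up to powers of $\log$, because of the constant $(\alpha-s)^{-r}$ appearing in \eqref{cond} --- together with a precise reconciliation of the three scales $\mathcal K_\tau=\exp(\log^\tau(N))$, $\mathcal J=\log(N)^{r'}$ and $\eta=\log(N)^{-\tau'}$; the admissibility constraint $r'<(\tau-\tau')/r$ is the inequality that makes them compatible. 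The remaining ingredients, namely the deterministic proxy error and the plug-in concentration bound, are routine once \eqref{qaz35} and Proposition \ref{pour_demain} are invoked.
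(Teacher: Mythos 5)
Your proposal is correct and follows essentially the same route as the paper's proof: reduce $\{|j_0/\mathcal J-\widehat j_0/\mathcal J|\geq\mathcal J^{-1}\}$ to $\{\widehat j_0\neq j_0\}$, split into under- and over-estimation events, control the deterministic proxy error $|H_j-\widetilde H_j|$ via \eqref{bias_Hj}--\eqref{qaz35} (for $j\leq j_0$) and the quantitative breakdown $H_j-\widetilde H_j>\tfrac12\log(N)^{-\tau^\prime}$ (for $j>j_0$, using the lower bound on $g(\mathcal K_\tau)$), then apply Proposition \ref{pour_demain} with $\eta=\tfrac12\log(N)^{-\tau^\prime}$ to each index and take a union bound over at most $\mathcal J-1$ indices. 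Your version of the underestimation event (using only the index $j_0$ rather than the paper's union over $j\leq j_0-1$) is a minor, harmless simplification, and you correctly single out the uniform quantitative gap for $j>j_0$ as the delicate step that the choice $r^\prime<(\tau-\tau^\prime)/r$ is designed to resolve.
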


\medskip

The estimator of the exponent $\alpha$ we propose is
\begin{equation}\label{def_hat_alpha}
 \hatalphamu = \left\{\widehat j_0+1/2\right\}/ \mathcal J. 
\end{equation}

\begin{corollary}\label{pour_lundi} 
	Under the conditions of Proposition \ref{pour_auj},   we have  $  \hatalphamu -\alpha = O_\PP (\log(N)^{-r^\prime})$. 
\end{corollary}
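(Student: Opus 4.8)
The plan is to deduce Corollary \ref{pour_lundi} directly from the concentration bound of Proposition \ref{pour_auj}, together with the elementary observation that once the integer estimator $\widehat j_0$ coincides with $j_0=\lfloor\mathcal J\alpha\rfloor$, the quantity $\hatalphamu=(\widehat j_0+1/2)/\mathcal J$ is automatically within $1/(2\mathcal J)$ of $\alpha$ by construction; the additive $+1/2$ in \eqref{def_hat_alpha} is precisely the centering that symmetrizes the worst‑case rounding error. Since $\mathcal J=\log(N)^{r^\prime}$, an error of size $O(\mathcal J^{-1})$ is exactly the claimed rate.

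First I would introduce the favourable event $\mathcal A_N=\{\,|\widehat j_0/\mathcal J - j_0/\mathcal J|<\mathcal J^{-1}\,\}$, whose complement is exactly the event bounded in Proposition \ref{pour_auj}. On $\mathcal A_N$ one has $|\widehat j_0-j_0|<1$, and since $\widehat j_0$ and $j_0$ are integers this forces $\widehat j_0=j_0$. Using the definition of $j_0$, namely $0\le \alpha - j_0/\mathcal J<\mathcal J^{-1}$, we then get on $\mathcal A_N$
\[
|\hatalphamu-\alpha|=\Big|\tfrac{j_0+1/2}{\mathcal J}-\alpha\Big|=\Big|\tfrac12\mathcal J^{-1}-\big(\alpha-\tfrac{j_0}{\mathcal J}\big)\Big|\le \tfrac12\mathcal J^{-1}=\tfrac12\log(N)^{-r^\prime}.
\]
Hence $\PP\big(\log(N)^{r^\prime}|\hatalphamu-\alpha|\ge \tfrac12\big)\le \PP(\mathcal A_N^c)$, and it remains only to show that the right‑hand side of the bound in Proposition \ref{pour_auj} vanishes as $N\to\infty$.

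That last verification uses the standing assumptions on $\mathcal K_\tau=\lfloor\exp(\log^\tau(N))\rfloor$, on $\mathfrak m$, and on $\mathcal J=\log(N)^{r^\prime}$, and is a routine check of the three summands: $2(\mathcal J-1)/N\lesssim \log(N)^{r^\prime}/N\to0$; the term $2(\mathcal J-1)N\exp(-2C_0^2\sqrt{\underline C_M}\,\mathcal K_\tau^{-2D}\mathfrak m^{1/2})$ tends to zero because under the imposed growth condition the exponent grows faster than $\log(N\mathcal J)$; and the last term tends to zero because $\tau^\prime<\tau$ gives $\tau-\tau^\prime<\tau$, so $D\log(N)^\tau-2\log(N)^{\tau-\tau^\prime}\to+\infty$ and the double exponential dominates the polylogarithmic prefactors $\mathcal J$ and $\log(N)^{2\tau-2\tau^\prime}$. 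Combining, $\PP(\mathcal A_N^c)\to0$, so $\log(N)^{r^\prime}(\hatalphamu-\alpha)$ is bounded by $1/2$ with probability tending to one, which is exactly $\hatalphamu-\alpha=O_\PP(\log(N)^{-r^\prime})$.

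The only point requiring any care — and it is bookkeeping rather than a genuine difficulty — is absorbing the exponential term that carries the extra factor $N$ (inherited from the $2N\exp(\cdots)$ in Proposition \ref{pour_demain} via a union bound over $j\in\{1,\dots,\mathcal J\}$); this is handled by the same growth condition ($\log(N)^\tau\lesssim\log(\mathfrak m)$, slightly strengthened) that already renders the last term of \eqref{zertnada} negligible. No estimate beyond Proposition \ref{pour_auj} is needed.
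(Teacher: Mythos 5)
Your proposal is correct and follows the route the paper intends: Corollary \ref{pour_lundi} is a direct consequence of Proposition \ref{pour_auj}, since on the complement of the event bounded there one has $\widehat j_0=j_0$ and hence $|\hatalphamu-\alpha|\le \tfrac12\mathcal J^{-1}=\tfrac12\log(N)^{-r^\prime}$ by the rounding property $0\le\alpha-j_0/\mathcal J<\mathcal J^{-1}$. Your remark that the vanishing of the term $2(\mathcal J-1)N\exp(-2C_0^2\sqrt{\underline C_M}\,\mathcal K_\tau^{-2D}\mathfrak m^{1/2})$ requires the growth condition on $\mathfrak m$ (as in Corollary \ref{pour_mardi}) is a fair and accurate observation about an implicit hypothesis, not a flaw in your argument.
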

	
\medskip

Given the regularity estimator $\hatalphamu $, the optimal $L$ can also be estimated. Let $L^*(\alpha)$ and $C^*(\alpha) $ be defined as in \eqref{L_general} with $\alpha$ instead of $\alphamu$. Let $\widehat L^*=\lfloor C^*( \hatalphamu) \overline M^{1/(2\hatalphamu + D)} \rfloor $
and $C^*(\widehat \alpha) $ be their plugin estimates. 

\medskip

\begin{corollary}\label{pour_mardi} 
The conditions of Proposition \ref{pour_auj} hold, with $r< \tau -\tau^\prime<1$. Let $\mathcal J=\log(N)^{r^\prime}$ with $0< r^\prime <(\tau-\tau^\prime )/r$. If constants  $ \underline c$ and  $ \overline c$ exist such that  $ \underline c \leq \log(N)/ \log(\mathfrak m) \leq \overline c$, then 
	\begin{equation}\label{optimal_Lhat}
		\PP \left( \left|\frac{\widehat L^*}{ L^*(\alpha)}- 1\right| \geq \frac{12 (1+\underline c)/ \{D(2\alpha+D) \underline c\}}{\log (N)^{r^\prime-1}} \right) \leq 6  \mathbb P\left( \left| \frac{j_0}{\mathcal J} - \frac{\widehat j_0}{\mathcal J}\right| \geq \mathcal J^{-1}\right) .
	\end{equation}
	In particular, it holds that $\PP\{\widehat L^*= L^*(\alpha)\}\rightarrow 1$ as $N\rightarrow \infty$.
	
\end{corollary}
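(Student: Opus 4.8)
The plan is to separate a \emph{deterministic sensitivity estimate} from a one-line \emph{probabilistic reduction} to Proposition~\ref{pour_auj}. First I work on the event $E=\{|\widehat j_0/\mathcal J-j_0/\mathcal J|<\mathcal J^{-1}\}$. Since $\widehat j_0$ and $j_0$ are integers, $E$ forces $\widehat j_0=j_0$; hence on $E$ the estimator $\widehat\alpha=(\widehat j_0+\tfrac12)/\mathcal J$ equals the \emph{deterministic} value $(j_0+\tfrac12)/\mathcal J$, and combined with $0\le\alpha-j_0/\mathcal J<\mathcal J^{-1}$ this gives $|\widehat\alpha-\alpha|\le\tfrac1{2\mathcal J}=\tfrac12\log(N)^{-r'}$. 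So on $E$ it suffices to bound $|\widehat L^*/L^*(\alpha)-1|$ under a deterministic perturbation of the exponent of size at most $\tfrac1{2\mathcal J}$.

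For that estimate, set $f(\beta)=C^*(\beta)\,\overline M^{\,1/(2\beta+D)}$, so that $\widehat L^*=\lfloor f(\widehat\alpha)\rfloor$ and $L^*(\alpha)=\lfloor f(\alpha)\rfloor$, and write $\log f(\beta)=\frac{\log\kappa(\beta)+\log\overline M}{2\beta+D}$ with $\kappa(\beta)=\frac{2\beta\,\mathcal C_\mu(\beta)(D+1)!}{DK_1\varrho(D)(2^{2D+1}-2^D)}$ collecting the constants of \eqref{L_general} (here $\mathcal C_\mu(\beta)$ is the constant of Proposition~\ref{reste-control} viewed as a function of the exponent, which I take locally Lipschitz near $\alpha$). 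The $\log\kappa$ part is benign, contributing only $O(|\widehat\alpha-\alpha|)=O(\mathcal J^{-1})$, which is of lower order. For the dominant part, $\tfrac{d}{d\beta}\tfrac{\log\overline M}{2\beta+D}=-\tfrac{2\log\overline M}{(2\beta+D)^2}$, and since the mean-value point $\beta$ lies within $o(1)$ of $\alpha$ and $(2\beta+D)^2\ge D(2\alpha+D)$ for $N$ large,
\[
|\log f(\widehat\alpha)-\log f(\alpha)|\le\frac{\log\overline M}{D(2\alpha+D)\,\mathcal J}\,(1+o(1)).
\]
By Assumption (D\ref{zerport_main}), $\overline M\asymp N\mathfrak m$, so $\log\overline M=\log N+\log\mathfrak m+O(1)$, and $\underline c\le\log N/\log\mathfrak m$ gives $\log\overline M\le\tfrac{1+\underline c}{\underline c}\log N\,(1+o(1))$; with $\mathcal J=\log(N)^{r'}$ this yields $|\log f(\widehat\alpha)-\log f(\alpha)|\le\frac{(1+\underline c)/\{D(2\alpha+D)\underline c\}}{\log(N)^{r'-1}}(1+o(1))$, which $\to0$ once $r'>1$ (permitted since $(\tau-\tau')/r>1$). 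Exponentiating with $|e^x-1|\le2|x|$ for small $x$, and absorbing the integer-part corrections — of size $O(1/L^*(\alpha))$, hence $o(\log(N)^{1-r'})$ because $L^*(\alpha)\asymp\overline M^{1/(2\alpha+D)}$ grows polynomially in $N$ — I obtain, for $N$ large and on $E$, the deterministic inequality $|\widehat L^*/L^*(\alpha)-1|\le\frac{12(1+\underline c)/\{D(2\alpha+D)\underline c\}}{\log(N)^{r'-1}}$, the generous constant $12$ (in place of $2$) swallowing every $(1+o(1))$ factor.

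Consequently, for $N$ large, $\{|\widehat L^*/L^*(\alpha)-1|\ge\tfrac{12(1+\underline c)/\{D(2\alpha+D)\underline c\}}{\log(N)^{r'-1}}\}\subseteq E^c=\{|j_0/\mathcal J-\widehat j_0/\mathcal J|\ge\mathcal J^{-1}\}$, which yields \eqref{optimal_Lhat}, the displayed constant $6$ leaving ample slack for small-$N$ effects and for the crude treatment of the $\kappa$- and floor-terms. Since the right-hand side of \eqref{optimal_Lhat} tends to zero by Proposition~\ref{pour_auj}, the last assertion $\PP\{\widehat L^*=L^*(\alpha)\}\to1$ follows. The main obstacle is the deterministic step: one has to confirm that the $C^*(\cdot)$ contribution to the logarithmic derivative of $f$ is genuinely of lower order, that $\log\overline M$ is the dominant factor and is $\sim\tfrac{1+\underline c}{\underline c}\log N$ under Assumption (D\ref{zerport_main}) and the bound on $\log N/\log\mathfrak m$, and that the two integer parts differ from $f(\widehat\alpha),f(\alpha)$ by negligible amounts — all while tracking constants sharply enough to land precisely on the stated bound.
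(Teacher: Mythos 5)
Your route to the displayed inequality \eqref{optimal_Lhat} is the natural one and is essentially sound: on the event $E=\{\widehat j_0=j_0\}$ (which, as you note, is exactly the complement of the event on the right of \eqref{optimal_Lhat}) the estimator $\widehat\alpha$ is deterministic with $|\widehat\alpha-\alpha|\le 1/(2\mathcal J)$, and a mean-value bound on $\beta\mapsto \log\bigl(C^*(\beta)\overline M^{\,1/(2\beta+D)}\bigr)$ together with $\log\overline M\le \frac{1+\underline c}{\underline c}\log N\,(1+o(1))$ lands on the stated constant with room to spare. Two caveats there: (i) the step $|e^x-1|\le 2|x|$ requires $x_N=O(\log(N)^{1-r'})\to 0$, i.e.\ $r'>1$, whereas the hypotheses only give $0<r'<(\tau-\tau')/r$; for $r'\le 1$ your event inclusion $\{|\widehat L^*/L^*(\alpha)-1|\ge \mathrm{thr}\}\subseteq E^c$ fails because $e^{x_N}-1$ can vastly exceed $12x_N$. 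You should state $r'>1$ as a standing restriction (it is what makes the statement meaningful anyway). (ii) The Lipschitz treatment of $\beta\mapsto\mathcal C_\mu(\beta)$ is an unverified regularity assumption on a constant the paper never makes explicit; acceptable, but worth flagging as such.

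The genuine gap is the last sentence. The assertion $\PP\{\widehat L^*=L^*(\alpha)\}\to 1$ does \emph{not} follow from \eqref{optimal_Lhat}: that display only says the \emph{relative} error is below a threshold $\asymp\log(N)^{1-r'}$ with high probability. Since $L^*(\alpha)=\lfloor C^*(\alpha)\overline M^{\,1/(2\alpha+D)}\rfloor$ diverges polynomially in $N$, a relative error of order $\log(N)^{1-r'}$ corresponds to an \emph{absolute} discrepancy
\begin{equation}
\bigl|\widehat L^*-L^*(\alpha)\bigr|\;\asymp\; L^*(\alpha)\,\log(N)^{1-r'}\;\longrightarrow\;\infty ,
\end{equation}
so equality of the two integers is not implied. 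Indeed, on $E$ one has $f(\widehat\alpha)/f(\alpha)=\exp\{O(\log\overline M/\mathcal J)\}=1+O(\log(N)^{1-r'})$ with $f(\beta)=C^*(\beta)\overline M^{\,1/(2\beta+D)}$, and exact coincidence of the floors would require $|f(\widehat\alpha)-f(\alpha)|=o(1)$, i.e.\ $|\widehat\alpha-\alpha|=O\bigl(1/\{L^*(\alpha)\log\overline M\}\bigr)$ — a polynomial accuracy that the regularity estimator, accurate only to $O(1/\mathcal J)=O(\log(N)^{-r'})$, cannot deliver. So either the final claim needs a separate argument exploiting structure you (and, as far as I can see, the corollary's hypotheses) do not provide, or it must be weakened to $\widehat L^*/L^*(\alpha)\overset{\PP}{\to}1$, which is what your argument actually proves. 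As written, "the last assertion follows" is a non sequitur and must be repaired or the conclusion restated.
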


\medskip

Note that the condition $\log(N)/ \log(\mathfrak m) \leq \overline c$ guarantees  $\mathcal K_\tau^{2D} \mathfrak m^{-1/2}= o(\log(N))$.

\medskip

\begin{remark}
Given the estimate  $ \hatalphamu$ of the regularity $\alpha$, one would also like to estimate the associated Hölder constant $C_{\mu, \hatalphamu}$, defined as in \eqref{cond}.   For this purpose,   with $\mathfrak c>0$ some constant to be set in practice, we propose the estimate 
\begin{equation}
\widehat C_{\mu,\hatalphamu} = \min  \left( \max_{\mathbf k}\big\{\widehat b_{\mathbf k}^{1/2} v^{-1/2}_{\mathbf k}\big\},  \mathfrak c \right)
\end{equation}
where
$
v_{\mathbf k}=\{\sum_{\mathbf k^+} 1\}^{-1}\sum_{\mathbf k^+} \sum _{(i,m) ,(q,m^\prime)} \left| \mathbf T_{i,m}- \mathbf T_{q,m^\prime}\right|^{\widehat \alpha-3/(2\mathcal J)} F_{\mathbf k}(\mathbf T_{i,m})F_{\mathbf k^+}(\mathbf T_{q,m^\prime}).
$
The estimator $\widehat C_{\mu,\hatalphamu}$ is inspired by the definition of $\widehat b_{\mathbf k}$ in \eqref{def_b_kgras} and the fact that, by Proposition  \ref{pour_auj},
the event $\{ \hatalphamu  -3/(2\mathcal J)\leq \alpha\}$ has the probability tending to 1.

\end{remark}

%\input{sections/discussion}
% !TeX root = ../mean.tex

\appendix

\section{Proofs}\label{proofs_main}

\begin{proof}[Proof of Proposition \ref{variance-V}] By definition, and since $\EE[\eta_{i,m}\eta_{i^\prime ,m^\prime }]=0$ when $i\neq i^\prime$, we have 
	\begin{multline}\label{dec_V_k}
	\EE[V_{\mathbf k}^2]= 	\EE\left[\left\{\sum_{(i,m)} \omega_{i,m}\frac{\eta_{i,m}\phi_{\mathbf k}(\Tmi)}{f_{ \mathbf T}(\Tmi)} \right\}^2 \right]\\
		 \hspace{-2cm}  =\sum_{(i,m)} \EE\left[\omega_{i,m}^2\frac{ \{\sigma^2(\Tmi)+\gamma(\Tmi,\Tmi)\}\phi_{\mathbf k}^2(\Tmi)}{f_{\mathbf T}^2(\Tmi)}  \right]
		\\\hspace{1.8cm} +\sum_{i=1}^N \sum_{1\leq m\ne m^\prime \leq M_i}\!\!\EE\!\left[ \omega_{i,m}\omega_{i,m^\prime}\frac{\gamma(\Tmi, \mathbf T_{i,m^\prime})\phi_{\mathbf k}(\Tmi)\phi_{\mathbf k}(\mathbf T_{i,m^\prime})}{f_{\mathbf T}(\Tmi)f_{ \mathbf T}(\mathbf T_{i,m^\prime})}\right]\\
 =: \sum_{(i,m)} \EE\left[\omega_{i,m}^2\frac{ \{\sigma^2(\Tmi)+\gamma(\Tmi,\Tmi)\}\phi_{\mathbf k}^2(\Tmi)}{f_{ \mathbf T}^2(\Tmi)}  \right]
+\mathcal R_{\mathbf k}.
	\end{multline}
Next, for any $\mathbf s,\mathbf t\in \mathcal T$,  by assumptions (H\ref{assuH12}), (D\ref{assuD4}) and (D\ref{assuD5}), we have
\begin{multline}
\left| \frac{ \{\sigma^2(\mathbf t)+\gamma(\mathbf t,\mathbf t)\}\phi_{\mathbf k}^2(\mathbf t)}{f_{\mathbf T}^2(\mathbf t)}- \frac{ \{\sigma^2(\mathbf s)+\gamma(\mathbf s,\mathbf s)\}\phi_{\mathbf k}^2(\mathbf s)}{f_{\mathbf T}^2(\mathbf s)}\right|\\ 
 \leq \frac{4C_1^2}{C_0^4} \|\sigma^2 + \gamma\|_\infty \|\mathbf t - \mathbf s \|^{\alpha_{f}}+ 
 \frac{2}{C_0^2}\{\|\mathbf  t-\mathbf s\|^{\alpha_{\sigma}}+ \|\mathbf t-\mathbf s\|^{H_\gamma}\} 
 + \frac{ 2^{D+1}\pi   \| \sigma^2 + \gamma\|_\infty}{C_0^2} |\mathbf k|_1 \|\mathbf t-\mathbf s \|.
\end{multline}
Here,  $(\sigma^2 + \gamma)(\mathbf t)=\sigma^2(\mathbf t)+\gamma (\mathbf t,\mathbf t)$.
Using Proposition \ref{lem_Vor_Index}, a constant $\varrho(D)$ exists such that   
\begin{multline}
\sum_{(i,m)} \EE\left[\omega_{i,m}^2\frac{\{ \sigma^2(\Tmi)+\gamma(\Tmi,\Tmi)\}\phi_{\mathbf k}^2(\Tmi)}{f_T^2(\Tmi)}  \right] \\ = \frac{1}{\overline M^2} \sum_{(i,m)} \EE\left[\{1 + (\widehat c_{i,m} - \widehat d_{i,m})^2 \}\frac{ \sigma^2(\Tmi)\phi_{\mathbf k}^2(\Tmi)}{f_T^2(\Tmi)}  \right] \\
= \frac{\varrho(D)}{\overline M} \EE\left[\frac{ \{\sigma^2(\mathbf T)+\gamma(\mathbf T,\mathbf T)\}\phi_{\mathbf k}^2(\mathbf T)}{f_{\mathbf T}^2(\mathbf T)}  \right]\times\left \{ 1 + O\left(\overline M^{\; -\underline \alpha} \right) + |\mathbf k|_1 O\left( \overline M^{\;-1}\right)  \right \},
\end{multline}  
with $\underline \alpha=\min\{\alpha_f,\alpha_\sigma, H_\gamma\}\in(0,1]$. We show in  the Supplement that $\varrho (D)=5/2$ if $D=1$.  
For $D>1$, \cite{kabatiansky1978bounds} have shown that $\widehat d_{i,m} \lesssim  \mathfrak K^{D}$, see also \citet[Lemma 1.3]{Henze87}, for some constant $\mathfrak R$.  On the other hand,  \citet[Theorem 2.1]{devroye2017} have shown that $ \overline M ^2\mathbb E[ V_{i,m}^2\mid \mathbf T_{i,m}] \rightarrow \alpha(D)\in[1,2]$, where $ V_{i,m}$ is the volume of the Voronoi cell of $\mathbf T_{i,m}$ computed with respect to the distribution of $\mathbf T$. These facts imply $\varrho (D)\lesssim \mathfrak K^{D}$.

The following lemma, proved in  the Supplement, is used for bounding  $\mathcal R_{\mathbf k}$ in \eqref{dec_V_k}.

\medskip

\begin{lemma}\label{qar1}
Let $\psi  : \mathcal T^2 \rightarrow \mathbb R$ be a Hölder continuous function. Then, under the Assumptions (H\ref{assuH2}) and (H\ref{assuH12}), we have for $m\ne m^\prime$,
\begin{equation}
\EE\left[ \omega_{i,m}\omega_{i,m^\prime} \psi(\Tmi, \mathbf T_{i,m^\prime})\right] = \frac{1}{\overline M(\overline M - 1)}\int_{\mathcal T^2} \psi(\mathbf s, \mathbf t) f_{\mathbf T}(\mathbf s)f_{\mathbf T}(\mathbf t) {\rm d}\mathbf s {\rm d}\mathbf t \times\{ 1 + o(1)\}.
\end{equation}
\end{lemma}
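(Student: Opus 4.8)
The plan is to expand the product of weights, isolate the obvious leading term, and show every correction is negligible. Set $\Delta_{i,m}:=\widehat c_{i,m}-\widehat d_{i,m}$, so that $\omega_{i,m}=(1+\Delta_{i,m})/\overline M$, and write
\[
\EE\big[\omega_{i,m}\omega_{i,m^\prime}\psi(\Tmi,\mathbf T_{i,m^\prime})\big]=\frac{1}{\overline M^{2}}\Big(\EE[\psi(\Tmi,\mathbf T_{i,m^\prime})]+\EE\big[(\Delta_{i,m}+\Delta_{i,m^\prime})\psi\big]+\EE\big[\Delta_{i,m}\Delta_{i,m^\prime}\psi\big]\Big).
\]
Under (H\ref{assuH2}), for $m\ne m^\prime$ the design points $\Tmi$ and $\mathbf T_{i,m^\prime}$ are independent copies of $\mathbf T$, so the first term equals $\int_{\mathcal T^{2}}\psi(\mathbf s,\mathbf t)f_{\mathbf T}(\mathbf s)f_{\mathbf T}(\mathbf t)\,{\rm d}\mathbf s\,{\rm d}\mathbf t$; since $\overline M^{-2}=\{\overline M(\overline M-1)\}^{-1}\{1+o(1)\}$, this already produces the announced leading term, and it suffices to prove that the linear remainder $\EE[(\Delta_{i,m}+\Delta_{i,m^\prime})\psi]$ and the quadratic remainder $\EE[\Delta_{i,m}\Delta_{i,m^\prime}\psi]$ are $o(1)$. (Alternatively one may first note that exchangeability of the $\overline M$ i.i.d.\ pairs $(\Tmi,\omega_{i,m})$ makes $\EE[\omega_{a}\omega_{b}\psi(\mathbf T_{a},\mathbf T_{b})]$ the same for every ordered pair of distinct indices, and then recover the normalization $\{\overline M(\overline M-1)\}^{-1}$ from $\sum_{a\ne b}\omega_{a}\omega_{b}\psi=\sum_{a,b}\omega_{a}\omega_{b}\psi-\sum_{a}\omega_{a}^{2}\psi(\mathbf T_{a},\mathbf T_{a})$, the diagonal sum being $O(\overline M^{-1})$ by the moment identities already used for Proposition~\ref{variance-V}; I keep the direct expansion as the main route.)

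For the linear remainder I would use $\Delta_{i,m}=\sum_{(j,m^{\prime\prime})\ne(i,m)}\big\{V_{(i,m)}(\tobs\setminus\{\mathbf T_{j,m^{\prime\prime}}\})-\mathbf 1\{\mathbf T_{j,m^{\prime\prime}}\in\mathcal S_{(i,m)}(\tobs\setminus\{\mathbf T_{j,m^{\prime\prime}}\})\}\big\}$ together with the fact underlying Proposition~\ref{lem_Vor_Index}: each summand has conditional mean zero given $\tobs\setminus\{\mathbf T_{j,m^{\prime\prime}}\}$. Since $\psi(\Tmi,\mathbf T_{i,m^\prime})$ is measurable with respect to $\sigma(\Tmi,\mathbf T_{i,m^\prime})$, which is coarser than $\sigma(\tobs\setminus\{\mathbf T_{j,m^{\prime\prime}}\})$ for every $(j,m^{\prime\prime})\notin\{(i,m),(i,m^\prime)\}$, conditioning on $(\Tmi,\mathbf T_{i,m^\prime})$ annihilates all summands but the single ``interaction'' one indexed by $(i,m^\prime)$; an exchangeability count shows that both its volume and its indicator part have unconditional mean $(\overline M-1)^{-1}$, whence $|\EE[\Delta_{i,m}\psi(\Tmi,\mathbf T_{i,m^\prime})]|\le 2\|\psi\|_\infty(\overline M-1)^{-1}$, and symmetrically for $\Delta_{i,m^\prime}$; so the linear remainder is $O(\overline M^{-1})$.

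The quadratic remainder $\EE[\Delta_{i,m}\Delta_{i,m^\prime}\psi]$ is the crux, and the only genuinely delicate point: the naive bound $\|\psi\|_\infty\EE[\Delta_{i,m}^{2}]^{1/2}\EE[\Delta_{i,m^\prime}^{2}]^{1/2}$ is useless because $\EE[\Delta_{i,m}^{2}]=\Theta(1)$ (as the proof of Proposition~\ref{variance-V} shows). The idea is to exploit the locality (stabilization) of the Voronoi fluctuations: a summand of $\Delta_{i,m}$ indexed by $(j,m^{\prime\prime})$ vanishes unless $\mathbf T_{j,m^{\prime\prime}}$ is a Voronoi neighbour of $\Tmi$ in $\tobs$, or in $\tobs$ with $\mathbf T_{j,m^{\prime\prime}}$ deleted, so $\Delta_{i,m}$ is determined by $\tobs$ inside a ball centred at $\Tmi$ of radius $O(\overline M^{-1/D}(\log\overline M)^{1/D})$, uniformly with probability $1-O(\overline M^{-2})$ under (H\ref{assuH12}). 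Off an event of probability $O(\overline M^{-1}\log\overline M)$ — where the stabilization balls around $\Tmi$ and $\mathbf T_{i,m^\prime}$ overlap — the fluctuations $\Delta_{i,m}$ and $\Delta_{i,m^\prime}$ are governed by disjoint parts of the sample, which one uses to show that, conditionally on $(\Tmi,\mathbf T_{i,m^\prime})$, they are (essentially) independent with conditional means $O(\overline M^{-1})$ each, so the good-event contribution is $O(\overline M^{-2})$; on the complementary event one bounds $\EE[|\Delta_{i,m}\Delta_{i,m^\prime}|\mathbf 1_{\mathrm{bad}}]\le\EE[\Delta_{i,m}^{4}]^{1/4}\EE[\Delta_{i,m^\prime}^{4}]^{1/4}\PP(\mathrm{bad})^{1/2}=o(1)$, using that $\widehat d_{i,m}$ is uniformly bounded (\citet{kabatiansky1978bounds}; \citet[Lemma~1.3]{Henze87}) and $\widehat c_{i,m}$ has uniformly bounded moments (via \citet[Theorem~2.1]{devroye2017} and stabilization of the nearest-neighbour graph). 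The main obstacle is to make the ``disjoint parts $\Rightarrow$ conditional independence'' step precise in the presence of the conditioning on the overlap event; this can be handled by a de-Poissonisation/coupling argument, or bypassed by invoking the cited second-moment asymptotics for Voronoi volumes and vertex degrees. All the remaining estimates parallel those already carried out for Proposition~\ref{variance-V}.
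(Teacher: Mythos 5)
Your decomposition $\omega_{i,m}\omega_{i,m^\prime}=\overline M^{-2}(1+\Delta_{i,m})(1+\Delta_{i,m^\prime})$ and the treatment of the first two pieces are sound. The leading term follows from the independence of $\Tmi$ and $\mathbf T_{i,m^\prime}$ for $m\ne m^\prime$, and your linear-remainder argument is correct: writing $\Delta_{i,m}$ as a sum of increments $\xi_{(i,m),(j,m^{\prime\prime})}$ satisfying $\EE[\xi_{(i,m),(j,m^{\prime\prime})}\mid \tobs\setminus\{\mathbf T_{j,m^{\prime\prime}}\}]=0$ (the identity behind Proposition \ref{lem_Vor_Index}), noting that $\psi(\Tmi,\mathbf T_{i,m^\prime})$ is measurable with respect to that $\sigma$-field whenever $(j,m^{\prime\prime})\notin\{(i,m),(i,m^\prime)\}$, and bounding the single surviving interaction term by $2\|\psi\|_\infty(\overline M-1)^{-1}$ does give $O(\overline M^{-1})$ for $\EE[(\Delta_{i,m}+\Delta_{i,m^\prime})\psi]$, hence an $O(\overline M^{-3})$ contribution overall.

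The quadratic remainder, which you yourself identify as the crux, is where the proof is incomplete. Two steps are asserted rather than established. First, for a fixed-size (binomial) sample the point configurations in two disjoint stabilization balls are \emph{not} independent, so ``disjoint supports $\Rightarrow$ conditional independence'' is precisely the de-Poissonization/coupling content you defer; it is not a cosmetic detail but the substance of the estimate. Second, and more seriously, restricting to the good event $G$ destroys the exact cancellation $\EE[\xi_{a,b}\mid\tobs\setminus\{\mathbf T_b\}]=0$ that powered your linear-term bound, because $\mathbf 1_G$ is not measurable with respect to $\tobs\setminus\{\mathbf T_b\}$; consequently the claim that the two factors have ``conditional means $O(\overline M^{-1})$ each'' on $G$ does not follow from anything you have proved. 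The standard repair is to introduce genuinely localized surrogates $\Delta^{r}_{i,m}$ (built only from points within radius $r$ of $\Tmi$) that coincide with $\Delta_{i,m}$ on $G$, compute their means directly, and control the discrepancy on $G^{c}$ via the moment bounds you cite for $\widehat d_{i,m}$ and the Voronoi volumes --- none of which is carried out. Note also that the naive term-by-term route does not rescue you: for $b\ne b^\prime$ the increment $\xi_{(i,m^\prime),b^\prime}$ still depends on $\mathbf T_b$ through the Voronoi geometry, so the off-diagonal terms in the double sum cannot be annihilated by conditioning without first invoking locality. As written, you have reduced the lemma to an unproved decoupling statement for nearest-neighbour/Voronoi functionals, so the proof has a genuine gap at its central step.
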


Applying  Lemma \ref{qar1} to the function 
$$
(\mathbf s, \mathbf t) \mapsto \gamma(\mathbf s, \mathbf t)\phi_{\mathbf k}(\mathbf s)\phi_{\mathbf k}(\mathbf t)f^{-1}_{\mathbf T}(\mathbf s)f^{-1}_{ \mathbf T}(\mathbf t),
$$
which is $\underline \alpha$- Hölder continuous by (D\ref{assuD4}) and (D\ref{assuD5}),
 $\underline \alpha=\min\{\alpha_f,\alpha_\sigma, H_\gamma\}$, we get  for any $\mathbf k$,
$$
\mathcal R_{\mathbf k} = \frac{ \sum_{i=1}^N M_i(M_i-1)}{\overline M(\overline M - 1)}\int_{\mathcal T^2} \gamma(\mathbf s, \mathbf t)\phi_{\mathbf k}(\mathbf s)\phi_{\mathbf k}(\mathbf t) {\rm d}\mathbf s{\rm d}\mathbf t \times\{ 1 + o(1)\},
$$
with the $(1)$ not depending on $\mathbf k$.
Now the proof of Proposition \ref{variance-V} is complete. \end{proof}

\medskip

\begin{proof}[Proof of Proposition \ref{mean:L2-norm}]
For \eqref{bterms}, by Proposition \ref{variance-B} and summing over $\mathbf k$, we get 
\begin{multline}
	\frac{1}{L} \sum_{j=0}^{L-1} \sum_{| \mathbf k |_1\leq 2(L+j)}\EE(B_{ \mathbf k} - a_{\mathbf k})^2 \lesssim \frac{1}{L} \sum_{j=0}^{L-1} \sum_{|\mathbf k|_1 \leq 2(L+j)}
	\left\{ \overline M ^{\; -(1+2\alphamu/D)} +|\mathbf k|^2  \overline M ^{\;- (1 + 2/D)}\right\}\\
\lesssim L^{D\; } \overline M ^{\; -(1+2\alphamu/D)} +  L^{ D+2\; } \overline M^{\; -(1 + 2/D)} .
\end{multline}
The constants in the $\lesssim$ are determined by the constants in Proposition \ref{variance-B}.
For justifying \eqref{Vterms}, first,We show in the Supplement that, for any $\mathbf t \in \mathcal T$
$$
\frac{1}{L} \sum_{j=0}^{L-1} \sum_{|\mathbf k|_1\leq 2(L+j)} \phi_{\mathbf k}^2(\mathbf t)=\frac{2^{2D+1} -2^D}{ (D+1)!} L^D + O(L^{D-1}).
$$
 Therefore, we get
\begin{multline}\label{pige_enfin}
	\!\!\! \frac{1}{L} \sum_{j=0}^{L-1} \sum_{|\mathbf k|_1\leq 2(L+j)}\!\!\!	\EE\!\left[\!\frac{ \{\sigma^2(\mathbf T)\!+\!\gamma(\mathbf T,\mathbf T)\}\phi_{\mathbf k}^2(\mathbf T)}{f_{\mathbf T}^2(\mathbf T)}  \!\right]\! \!= \frac{2^{2D+1}\!\! -\!2^D}{ (D+1)!} L^D\!\! \int_{\mathcal T}\!\frac{\gamma(\mathbf s, \mathbf s)\!+\! \sigma^{2} (\mathbf s)}{f_{\mathbf T}(\mathbf s)}{\rm d}\mathbf s + O(L^{D-1}).
\end{multline}
Finally, using Assumption (D\ref{assuD5}) and Proposition \ref{reste-control} on the approximation error of the de La Vallée Poussin sum, applied for each $\mathbf t\in\mathcal T$ to the map $\mathbf s\mapsto \gamma(\mathbf s,\mathbf t)$, we get 
\begin{multline}
\!\!\!\! \frac{1}{L} \sum_{j=0}^{L-1} \sum_{|\mathbf k|_1\leq 2(L+j)} \iint_{\mathcal T^2} \!\gamma(\mathbf s,\mathbf t)\phi_{\mathbf k}(\mathbf s) \phi_{\mathbf k}(\mathbf t){\rm d}\mathbf s{\rm d}\mathbf t = \int_{\mathcal T}\frac{1}{L} \sum_{j=0}^{L-1} \sum_{|\mathbf k|_1\leq 2(L+j)} \int_{\mathcal T} \!\gamma(\mathbf s,\mathbf t)\phi_{\mathbf k}(\mathbf s){\rm d}\mathbf s \phi_{\mathbf k}(\mathbf t) {\rm d} \mathbf t \\
  = \int_{\mathcal T} \gamma(\mathbf t,\mathbf t){\rm d}\mathbf t\times \{1 +   O\left(L^{-H_\gamma}\right)   \}.
\end{multline}
\end{proof}

\begin{proof}[Proof of Theorem \ref{normal_approximation}]
	Aiming to apply  \citet[Th. 2.1]{MVNAReinert2009}, the idea is to construct a random vector $ W ^\prime(d)$ such that the pair $ ( W(d),  W^\prime(d))$ is exchangeable and 
\begin{equation}\label{exch_cond}
\EE\left[  W^\prime(d) - W (d)\mid  W(d)\right]= -\Lambda  W (d) + R,
\end{equation}
for an invertible $(\sum_{|\mathbf k|_1\leq d} 1)\times(\sum_{|\mathbf k|_1\leq d}1) $ matrix $\Lambda$ and a remainder vector term $R$.   Recall that $W (d)\in\mathbb R^{\overline d}$ was defined in \eqref{eq:def_W_Stein} with the components $V_{\mathbf k}$, $|\mathbf k|_1\leq d$.   Here, the construction of $W^\prime(d)$ is done as follows~: consider \vspace{-.1cm} 
\begin{itemize}
\item $\{X_1^\prime, \dots, X_N^\prime \}$ an independent copy of  the realizations $ \{X_1, \dots, X_N\}$; \vspace{-.1cm} 
\item \hspace{-.2cm} $\{\varepsilon^\prime_{i,m}\!:  1\leq i\leq N, 1\leq m\leq M_i\}$   independent copy of    $\{\varepsilon_{i,m}\!:  1\leq i\leq N, 1\leq m\leq M_i\}$;\vspace{-.2cm} 
\item \hspace{-.2cm}  $\mathbf S\in \mathbb R^{\overline d}$ a uniform random vector with the support $\{\mathbf k\in  \mathbb N^D: |\mathbf k|_1\leq  d\}$;  \vspace{-.1cm} 
\item \hspace{-.2cm} $(I, J)$ a uniform random pair on $\{(i,m) :  1\leq i\leq N, 1\leq m\leq M_i\}$; \vspace{-.1cm} 

\item \hspace{-.2cm}  $\{ \mathbf u_{\mathbf k}\}_{|\mathbf k|_1 \leq d} $ the canonical basis of $\mathbb R^{\overline d}$.
\end{itemize}
Next, for $\eta_{i,m}= \varepsilon_{i,m} + \{X_i - \mu\}(\mathbf T_{i,m})$, we set  $\eta_{i,m}^\prime= \varepsilon_{i,m}^\prime + \{X_i^\prime - \mu\}(\mathbf T_{i,m})$ and
\begin{equation}\label{def_W_Stein_bis}
 W^\prime(d)= W (d)+
 \omega_{I,J} \frac{\phi_{\mathbf S}(\mathbf T_{I,J})}{f_{\mathbf T}(\mathbf T_{I,J})} \{\eta^\prime_{I,J} - \eta_{I,J} \}\mathbf u_{\mathbf S}. 
\end{equation}
Then $(W(d), W^\prime(d))$ is exchangeable and the condition \eqref{exch_cond} holds with $R=0$ and $\Lambda$ a diagonal matrix, \emph{i.e.}, $\Lambda = \{\overline d \; \overline M\}^{-1} I_{\overline d}$.   For readability, hereafter the components of $W(d)$ are renamed $W_{\mathbf k}$ and those of $W^\prime (d)$ are denoted $W^\prime_{\mathbf k}$.   By Proposition \ref{variance-V}, for $ \mathbf k \in \mathbb N^D$, 
\begin{multline}
	\operatorname{Var}(W_{\mathbf k}) = \mathbb E[ W_{\mathbf k}^2]= \frac{\varrho(D)}{\overline M} \int_{\mathcal T} \frac{ \{\sigma^2(\mathbf t)+\gamma(\mathbf t,\mathbf t)\}\phi_{\mathbf k}^2(\mathbf t) }{f_{\mathbf T}(\mathbf t)}{d}\mathbf t\times\left \{ 1  + o(1) \right \}
\\
+\frac{ \sum_{i=1}^N M_i(M_i-1)}{\overline M(\overline M - 1)}\iint_{\mathcal T^2} \gamma(\mathbf t, \mathbf s)\phi_{\mathbf k}(\mathbf t)\phi_{\mathbf k}(\mathbf s) {\rm d}\mathbf t{\rm d}\mathbf s \times\{ 1 + o(1)\}),
\end{multline}
with the $o(\cdot)$  not depending on $\mathbf k$. 	
Let $\EE^W[\cdots]=\EE [\cdots \mid W]$. By the definition \eqref{def_W_Stein_bis}, we have 
\begin{multline}
\EE^W\left[ (W_{\mathbf k}^\prime - W_{\mathbf k})^2\right]= \EE^W\left[\mathbbm1_{\{\mathbf  S=\mathbf k\}} (W_{\mathbf k}^\prime - W_{\mathbf k})^2\right]\\
=\frac{1}{\overline d} \EE^W\left[  \omega_{I,J}^2 \frac{\phi_{\mathbf k}^2(\mathbf T_{I,J})}{f_{\mathbf T}^2(\mathbf T_{I,J})} \{\eta^\prime_{I,J} - \eta_{I,J} \}^2\right]\\
=\frac{1}{\overline d\;\overline M} \sum_{(i,m)} \omega_{i,m}^2 \frac{\phi_{ \mathbf k}^2(\mathbf T_{i,m})}{f_{\mathbf T}^2(\mathbf T_{i,m})}\left\{\sigma^2(\mathbf T_{i,m}) + \gamma(\mathbf T_{i,m}, \mathbf T_{i,m})  + \eta_{i,m}^2\right\}.
\end{multline}
Therefore, we deduce
\begin{multline}
\EE\left[ \EE^W\left[ (W_{\mathbf k}^\prime - W_{\mathbf k})^2\right]\right] =\frac{2}{\overline d\;\overline M} \sum_{(i,m)} \EE\left[\omega_{i,m}^2 \frac{\phi_{ \mathbf k}^2(\mathbf T_{i,m})}{f_{\mathbf T}^2(\mathbf T_{i,m})}\left\{\sigma^2(\mathbf T_{i,m}) + \gamma(\mathbf T_{i,m},\mathbf T_{i,m}) \right\}\right]\\
= \frac{2\varrho(D)}{\overline d\;\overline M^{\;2}} 
%\int_0^1
\int_{\mathcal T}
\frac{ \{\sigma^{2}(\mathbf s)+\gamma(\mathbf s,\mathbf s)\}\phi_{ \mathbf k}^2(\mathbf s)}{f_{ \mathbf T}(\mathbf s)}{\rm d}\mathbf s\times 
\left\{ 1 + o(1)\right\}.
\end{multline}
Next,
\begin{multline}
\EE\left[ \EE^W\left[ (W_{\mathbf k}^\prime - W_{\mathbf k})^2\right]^2\right]\\
= \frac{1}{\overline d ^2\overline M^{\; 2}} \EE\bigg[ \bigg\{\sum_{(i,m)} \omega_{i,m}^2 \frac{\phi_{ \mathbf k}^2(\mathbf T_{i,m})}{f_{\mathbf T}^2( \mathbf T_{i,m})}\bigg\{\sigma^2(\mathbf T_{i,m}) + \gamma(\mathbf T_{i,m}, \mathbf T_{i,m})  + \eta_{i,m}^2\bigg\}\bigg\}^2 \bigg] \\
=\frac{3}{\overline d^2\overline M^{\; 2}} \sum_{(i,m)}\sum_{(j,m^\prime)}\EE\bigg[ \omega_{i,m}^2\omega_{j,m^\prime}^2 \frac{\phi_{ \mathbf k}^2(\mathbf T_{i,m})}{f_{\mathbf T}^2( \mathbf T_{i,m})} \frac{\phi_{ \mathbf k}^2(\mathbf T_{j,m^\prime})}{f_{\mathbf T}^2( \mathbf T_{j,m^\prime})}\left\{\sigma^2(\mathbf T_{i,m}) + \gamma(\mathbf T_{i,m},\mathbf T_{i,m}) \right\} \\
 \times\bigg\{\sigma^2(\mathbf T_{j,m^\prime}) + \gamma(\mathbf T_{j,m^\prime}, \mathbf T_{j,m^\prime}) \bigg\} \bigg] + \mathcal R_1,
\end{multline}
where
\begin{equation}
 \mathcal R_1 = \frac{1}{\overline d^2\overline M^{\; 2}}\sum_{(i,m)}\sum_{(j,m^\prime)}\EE\left[ \omega_{i,m}^2\omega_{j,m^\prime}^2 \frac{\phi_{ \mathbf k}^2(\mathbf T_{i,m})}{f_{\mathbf T}^2( \mathbf T_{i,m})} \frac{\phi_{ \mathbf k}^2(\mathbf T_{j,m^\prime})}{f_{\mathbf T}^2( \mathbf T_{j,m^\prime})}\eta_{i,m}^2\eta_{j,m^\prime}^2\right].
\end{equation}
For any   index $(j,m^{\prime})$, the number of observation points needed to construct $ \widehat d_{j,m^\prime}$ and $\widehat c_{j, m^\prime}$ is bounded by a constant, say, $C_D$ depending only on   $D$.   We thus obtain by independence  
\begin{equation}
\EE\left[ \EE^W\left[ (W_{\mathbf k}^\prime - W_{\mathbf k})^2\right]^2\right]  =
  \frac{ 3\varrho^2(D) }{ \overline d^2 \overline M ^{\; 4}} 
\times \left(\!
%\int_{t \in [0,1]}
\int_{\mathcal T}
 \frac{\phi_{\mathbf k} ^2(\mathbf t)}{f_{\mathbf T}(\mathbf t)}\{\sigma^2(\mathbf t) + \gamma(\mathbf t,\mathbf t)\}{\rm d}\mathbf t \!\right)^2 \times \{1+o(1)\} + \mathcal R_1.
\end{equation}
 For the remainder term $\mathcal R_1$, we have 
\begin{multline}
\overline d^2\overline M^{\; 2} \mathcal R_1 = \sum_{(i,m)}\sum_{(j,m^\prime)}\EE\left[ \omega_{i,m}^2\omega_{j,m^\prime}^2 \frac{\phi_{ \mathbf k}^2(\mathbf T_{i,m})}{f_{\mathbf T}^2( \mathbf T_{i,m})} \frac{\phi_{ \mathbf k}^2(\mathbf T_{j,m^\prime})}{f_{\mathbf T}^2( \mathbf T_{j,m^\prime})}\eta_{i,m}^2\eta_{j,m^\prime}^2\right]\\
=\sum_{(i,m)}\sum_{(j,m^\prime): j\ne i}\EE\left[ \omega_{i,m}^2\omega_{j,m^\prime}^2 \frac{\phi_{ \mathbf k}^2(\mathbf T_{i,m})}{f_{\mathbf T}^2( \mathbf T_{i,m})} \frac{\phi_{ \mathbf k}^2(\mathbf T_{j,m^\prime})}{f_{\mathbf T}^2( \mathbf T_{j,m^\prime})}\left\{\sigma^2(\mathbf T_{i,m}) + \gamma(\mathbf T_{i,m}, \mathbf T_{i,m}) \right\}\right.\\
\left. \times\left\{\sigma^2(\mathbf T_{j,m^\prime}) + \gamma(\mathbf T_{j,m^\prime},\mathbf T_{j,m^\prime}) \right\} \right]\\
+ \sum_{i=1}^N \sum_{ m, m^\prime=1}^{M_i} \EE\left[ \omega_{i,m}^2\omega_{i,m^\prime}^2 \frac{\phi_{ \mathbf k}^2(\mathbf T_{i,m^\prime})}{f_{\mathbf T}^2( \mathbf T_{i,m^\prime})} \frac{\phi_{ \mathbf k}^2(\mathbf T_{i,m^\prime})}{f_{\mathbf T}^2( \mathbf T_{i,m^\prime})}\left\{\sigma^2(\mathbf T_{i,m})\sigma^2(\mathbf T_{i,m^\prime} )+ \sigma^2(\mathbf T_{i,m})\gamma(\mathbf T_{i,m^\prime}, \mathbf T_{i,m^\prime}) \right. \right.\\
\left.\left.+ \sigma^2(\mathbf T_{i,m^\prime})\gamma(\mathbf T_{i,m},\mathbf  T_{i,m}) + \{ X_i-\mu\}^2(\mathbf T_{i,m})\{X_i-\mu\}^2(\mathbf T_{i,m^\prime}) \right\} \right].
\end{multline}
Since the constant $C_D$ is independent of the sample size, we have~:
\begin{multline}
\overline d^2\overline M^{\; 2} \mathcal R_1=\varrho^2(D) \times \frac{\sum_{i=1}^N M_i(\overline M \!-M_i \!- 2C_D)}{ \overline M^{\; 4}} \left(
\int_{ \mathcal T}
\frac{\phi_{\mathbf k} ^2(\mathbf t)}{f_{ \mathbf T}(\mathbf t)}\{\sigma^2(\mathbf t) \!+\! \gamma(\mathbf t, \mathbf t)\}{\rm d}\mathbf t \right)^2 \! \times \{1+o(1)\} \\
+\varrho^2(D)  \times \frac{\sum_{i=1}^N M_i(M_i-2C_D)}{\overline M^{\; 4}}\left\{
2
\int_{\mathcal T}
\frac{\phi_{\mathbf k} ^2(t)}{f_{ \mathbf T}(\mathbf t)}\sigma^2(\mathbf t){\rm d} \mathbf t 
\times \int_{\mathbf t\in\mathcal T}
\frac{\phi_{ \mathbf k} ^2(\mathbf t)}{f_{ \mathbf T}(\mathbf t)}\gamma(\mathbf t,\mathbf t){\rm d}\mathbf t \right.\\ +  \left( 
\int_{\mathcal T}
\frac{\phi_{\mathbf k} ^2(\mathbf t)}{f_{\mathbf T}(\mathbf t)}\sigma^2(\mathbf t){\rm d}\mathbf t \right)^2
+ \left. 
\iint_{\mathcal T^2}
\frac{\phi_{ \mathbf k} ^2(\mathbf t)}{f_{ \mathbf T}(\mathbf t)} \frac{\phi_{ \mathbf k} ^2(\mathbf s)}{f_{\mathbf T}(\mathbf s)}m_4(\mathbf t,\mathbf s){\rm d}\mathbf t {\rm d}\mathbf s \right\} \times \{ 1+ o(1)\},
\end{multline}
where 
$
m_4(\mathbf t,\mathbf s)= \EE\left[\{X-\mu\}^2(\mathbf t)\{X-\mu\}^2(\mathbf s)  \right].
$
Thus, concerning the quantity $A$ in \citep[Theorem 1]{MVNAReinert2009}, we have
\begin{multline}
\operatorname{Var}\left(  \EE^W\left[ (W_{\mathbf k}^\prime - W_{\mathbf k})^2\right]\right) = \EE\left[ \EE^W\left[ (W_{\mathbf k}^\prime - W_{\mathbf k})^2\right]^2 \right]- \EE\left[ (W_{\mathbf k}^\prime - W_{\mathbf k})^2\right]^2\\
\leq \varrho^2(D) \bigg\{ \frac{2C_D  }{ \overline d^2 \overline M^{\;5}}\mathfrak a_1 +
 \frac{1}{\overline d^2 \overline M^{\; 6}} \sum_{i =1}^N  M_i(M_i+2C_D)( \mathfrak a_1+\mathfrak a_2)\bigg\} \times \{1+o(1)\},
\end{multline}
where 
$$
\mathfrak a_1=\left(
%\int_{t \in [0,1]} 
\int_{\mathcal T}
\frac{2^{D}}{f_{\mathbf T}(\mathbf t)}\{\sigma^2(\mathbf t) + \gamma(\mathbf t,\mathbf t)\}{\rm d}\mathbf t \right)^2,
$$
and
\begin{multline}
\mathfrak a_2= \left( 
%\int_{t\in[0,1]} 
\int_{\mathcal T}
\frac{2^{D}}{f_{ \mathbf T}(\mathbf t)}\sigma^2(\mathbf t){\rm d}\mathbf t \right)^2
\\ +2
%\int_{t\in[0,1]} 
\int_{\mathcal T}
\frac{2^{D}}{f_{\mathbf T}(\mathbf t)}\sigma^2(\mathbf t){\rm d}\mathbf t 
%\int_{t\in[0,1]} 
\int_{\mathcal T}
\frac{2^{D}}{f_{\mathbf T}(\mathbf t)}\gamma(\mathbf t,\mathbf t){\rm d}\mathbf t +  
%\int_{t,s\in[0,1]^2} 
\iint_{\mathcal T^2}
\frac{2^{2D}}{f_{\mathbf T}(\mathbf t)f_{ \mathbf T}(\mathbf s)}m_4(\mathbf t,\mathbf s){\rm d}\mathbf t {\rm d}\mathbf s. 
\end{multline}
Concerning the quantity $B$ in \citep[Th. 2.1]{MVNAReinert2009}, we have
\begin{multline}
\EE\left[|W^\prime_{\mathbf k}-W_{\mathbf k} |^3 \right] = \frac{ 1}{\overline  d} \EE\left[\left| \omega_{I,J}^3 \frac{\phi_{ \mathbf k}^3(\mathbf T_{I,J})}{f_{ \mathbf T} ^3(\mathbf T_{I,J})} \{\eta^\prime_{I,J} - \eta_{I,J} \}^3 \right| \right]\\ 
= \frac{ 1}{ \overline d \; \overline M} \sum_{(i,m)}\EE\left[\left| \omega_{i,m}^3 \frac{\phi_{\mathbf k}^3(\mathbf T_{i,m})}{f_{\mathbf T}^3(\mathbf T_{i,m})} \{\eta^\prime_{i,m} - \eta_{i,m} \}^3 \right| \right]
\leq\frac{ \mathfrak c_1}{ \overline d\;\overline M^{\;3}},
\end{multline}
for some constant  $\mathfrak c_1$.

By the definition \eqref{def_W_Stein_bis},  $(W_{\mathbf k}^\prime - W_{\mathbf k}) (W_{\mathbf j }^\prime - W_{\mathbf j})=0$ if $\mathbf k \neq \mathbf j$. This means that all the terms in the sums defining the quantities $A$ and $B$ in \citep[Th. 2.1]{MVNAReinert2009} are equal to zero, except those with indices $i=j$ and $i=j=k$, respectively. These diagonal terms have been bounded above. Moreover, the quantity $C$  in that Theorem 2.1 is equal to zero.
Gathering facts, for any three times differentiable function $h$, we deduce
\begin{multline}
\left|\EE[ h( W) - \EE h(\Sigma ^{1/2} Z)]\right| \leq   \frac{|h|_3\overline d\mathfrak c_1}{12 \overline M^{\;2}}\\
+\frac{\overline d \varrho(D)|h|_2}{4}\bigg\{ \frac{2C_D  }{ \overline M^{\;3}}\mathfrak a_1 +
 \frac{1}{\overline M^{\; 4}} \sum_{i =1}^N  M_i(M_i+2C_D)( \mathfrak a_1+\mathfrak a_2)\bigg\} ^{1/2},
\end{multline}
and this proves our Theorem \ref{normal_approximation}.
\end{proof}

\medskip

Before proving the Proposition \ref{pas_encore}, let us first decompose $\EE\left[ \widehat b_{\mathbf k} \right]$, for $|\mathbf k|_1\leq D(\mathcal K-1)-1$ and bound the terms of the decomposition. More precisely, let $\mathbf k^+$ such that $|\mathbf k^+|_1 = | \mathbf k|_1+1$, we write
\begin{multline}\label{def_les_frakC}
\EE \Bigg [\Bigg\{ \sum _{(i, m)}  Y_{ i,m} \{F_{\mathbf k}(\mathbf T_{i,m}) -  F_{\mathbf k^+}(\mathbf T_{i,m})\} \Bigg\}^{\!2\;} \Bigg]  
=\EE\Bigg[  \Bigg( \sum _{(i,m)} Y_{i,m}F_{\mathbf k}(\mathbf T_{i,m})\Bigg)^2\Bigg] \\ + \EE\Bigg[ \Bigg(\sum _{(i,m)} ( Y_{i,m}) F_{\mathbf k^+}(\mathbf T_{i,m})\Bigg)^2\Bigg]
 - 2 \EE\Bigg[\sum _{(i,m) \ne (q,m^\prime)}   Y_{i,m}  Y_{q,m^\prime}F_{\mathbf k}(\mathbf T_{i,m}) F_{\mathbf k^+}(\mathbf T_{q,m^\prime})  \Bigg] 
\\ =:\mathfrak C_{1,\mathbf k}+\mathfrak C_{2,\mathbf k}-2\mathfrak C_{3,\mathbf k}.
\end{multline}
Bounds for the terms $\mathfrak C_{1,\mathbf k}$, $\mathfrak C_{2,\mathbf k}$, $\mathfrak C_{3,\mathbf k}$ are given below and proved in the Supplement. 

\medskip

\begin{lemma}[Bounds for $\mathfrak C_{1,\mathbf k}$, $\mathfrak C_{2,\mathbf k}$ in \eqref{def_les_frakC}]\label{lem1_pas_encore}
	Under (H\ref{assuH2}),  (H\ref{assuH12}) and  (D\ref{assuD1}) to (D\ref{assuD5}), we have
\begin{equation}
\mathfrak C_{1,\mathbf k}=\EE\Bigg[  \Bigg( \sum _{(i,m)}  Y_{i,m}F_{\mathbf k}(\mathbf T_{i,m})\Bigg)^{\!2\;}\Bigg] 
=  \frac{1}{p_{\mathbf k}^2 } \EE\left[\mu(\mathbf T_1)\mu(\mathbf T_2)\mathbbm1_{\{ \mathbf T_1,\mathbf T_2\in I_{\mathbf k}\}} \right]+ \mathcal G_{\mathbf k},
\end{equation}
and
\begin{equation}
%0\leq 
 \left|\mathcal G_{\mathbf k}\right|  \lesssim  \frac{\mathcal K^D}{C_0 \overline M }  \times\left\{ \mu^2(\mathbf  t_{\mathbf k}) +\sigma^2(\mathbf  t_{\mathbf k}) + \gamma(\mathbf  t_{\mathbf k}, \mathbf  t_{\mathbf k})\right\} + \frac{\sum_{i=1}^N M_i(M_i-1)}{\overline M (\overline M -1)} \gamma(\mathbf   t_{\mathbf k},\mathbf  t_{\mathbf k}).
\end{equation}
A similar bound is valid for $\mathfrak C_{2,\mathbf k}$ by  replacing $p_{\mathbf k}$, $I_{\mathbf k}$, $ \mathbf t_{\mathbf k}$ by $p_{\mathbf k^+}$, $I_{\mathbf k^+}$, $ \mathbf t_{\mathbf k^+}$, respectively. 
\end{lemma}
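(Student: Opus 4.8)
The plan is to reduce $\mathfrak C_{1,\mathbf k}$ to a handful of elementary expectations involving the (random) count of observation points in the cube $I_{\mathbf k}$. I would write $N_{\mathbf k}=\sum_{(i,m)}\1\{\Tmi\in I_{\mathbf k}\}$ and, for each trajectory, $N_{i,\mathbf k}=\sum_{m=1}^{M_i}\1\{\Tmi\in I_{\mathbf k}\}$, so that $N_{\mathbf k}=\sum_{i=1}^N N_{i,\mathbf k}$. With the convention $0/0=0$, on $\{N_{\mathbf k}\ge1\}$ one has $F_{\mathbf k}(\Tmi)=N_{\mathbf k}^{-1}\1\{\Tmi\in I_{\mathbf k}\}$ and $\sum_{(i,m)}Y_{i,m}F_{\mathbf k}(\Tmi)=N_{\mathbf k}^{-1}\sum_{(i,m):\,\Tmi\in I_{\mathbf k}}Y_{i,m}$, while this sum is $0$ on $\{N_{\mathbf k}=0\}$. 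I would then split $Y_{i,m}=\mu(\Tmi)+\eta_{i,m}$, set $M_{\mathbf k}:=\sum_{(i,m)}\mu(\Tmi)F_{\mathbf k}(\Tmi)$ and $E_{\mathbf k}:=\sum_{(i,m)}\eta_{i,m}F_{\mathbf k}(\Tmi)$, and condition on the design $\tobs$: since $\EE[\eta_{i,m}\mid\tobs]=0$ the cross term drops and $\mathfrak C_{1,\mathbf k}=\EE[M_{\mathbf k}^2]+\EE[E_{\mathbf k}^2]$. Throughout, by (H\ref{assuH12}) the cube $I_{\mathbf k}$ (side $\mathcal K^{-1}$, diameter $\lesssim\mathcal K^{-1}$) satisfies $p_{\mathbf k}=\int_{I_{\mathbf k}}f_{\mathbf T}\in[C_0\mathcal K^{-D},C_1\mathcal K^{-D}]$, and since the $\Tmi$ are i.i.d.\ copies of $\mathbf T$ (by (H\ref{assuH2})), $N_{\mathbf k}\sim\mathrm{Bin}(\overline M,p_{\mathbf k})$.

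For $\EE[E_{\mathbf k}^2]$ I would use the conditional covariance structure \eqref{structure_dep}, which gives on $\{N_{\mathbf k}\ge1\}$ that $\EE[E_{\mathbf k}^2\mid\tobs]=N_{\mathbf k}^{-2}\sum_{(i,m):\,\Tmi\in I_{\mathbf k}}\{\sigma^2(\Tmi)+\gamma(\Tmi,\Tmi)\}+N_{\mathbf k}^{-2}\sum_{i=1}^N N_{i,\mathbf k}(N_{i,\mathbf k}-1)\,\overline\gamma_{i,\mathbf k}$, where $\overline\gamma_{i,\mathbf k}$ averages $\gamma(\Tmi,\mathbf T_{i,m^\prime})$ over the distinct in-cube pairs of trajectory $i$. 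Because $\sigma^2$, $\gamma$ are Hölder (by (D\ref{assuD4}), (D\ref{assuD5}) and the symmetry of $\gamma$), replacing arguments lying in $I_{\mathbf k}$ by $\mathbf t_{\mathbf k}$ costs an additive $o(1)$ uniform in $\mathbf k$; the first sum is then $\le N_{\mathbf k}^{-1}\{\sigma^2(\mathbf t_{\mathbf k})+\gamma(\mathbf t_{\mathbf k},\mathbf t_{\mathbf k})+o(1)\}$ and the second $\le\{\gamma(\mathbf t_{\mathbf k},\mathbf t_{\mathbf k})+o(1)\}\,N_{\mathbf k}^{-2}\sum_i N_{i,\mathbf k}(N_{i,\mathbf k}-1)$. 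Taking expectations needs two elementary facts about $N_{\mathbf k}$: from $\EE[(N_{\mathbf k}+1)^{-1}]=\{(\overline M+1)p_{\mathbf k}\}^{-1}\{1-(1-p_{\mathbf k})^{\overline M+1}\}$ and $\1\{N\ge1\}/N\le 2/(N+1)$ one gets $\EE[\1\{N_{\mathbf k}\ge1\}/N_{\mathbf k}]\le 2\mathcal K^D/(C_0\overline M)$, and $\PP(N_{\mathbf k}=0)=(1-p_{\mathbf k})^{\overline M}\le e^{-\overline M p_{\mathbf k}}\le\mathcal K^D/(C_0\overline M)$. The first already bounds the diagonal sum by $\lesssim(\mathcal K^D/C_0\overline M)\{\sigma^2(\mathbf t_{\mathbf k})+\gamma(\mathbf t_{\mathbf k},\mathbf t_{\mathbf k})\}$. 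For the within-trajectory term, on $\{N_{\mathbf k}\ge2\}$ one has $N_{\mathbf k}^{-2}\sum_i N_{i,\mathbf k}(N_{i,\mathbf k}-1)\le\{N_{\mathbf k}(N_{\mathbf k}-1)\}^{-1}\sum_i N_{i,\mathbf k}(N_{i,\mathbf k}-1)$, which is the fraction of ordered pairs of distinct in-cube observations coming from a common trajectory; conditionally on $\{N_{\mathbf k}=n\ge2\}$, the exchangeability of the $\overline M$ i.i.d.\ design points forces the set of in-cube slots to be a uniformly random $n$-subset of all $\overline M$ slots, so this conditional expectation equals $\sum_i M_i(M_i-1)/\{\overline M(\overline M-1)\}$ exactly; multiplying by $\PP(N_{\mathbf k}\ge2)\le1$ gives the clean bound $\sum_i M_i(M_i-1)/\{\overline M(\overline M-1)\}$. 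This combinatorial identification is the one genuinely delicate step; everything else is bookkeeping.

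Finally I would handle the main term $\EE[M_{\mathbf k}^2]$: conditionally on $\{N_{\mathbf k}=n\ge1\}$ the in-cube design points are i.i.d.\ with law $\mathcal L(\mathbf T\mid\mathbf T\in I_{\mathbf k})$, so $M_{\mathbf k}$ is the average of $n$ i.i.d.\ copies of $\mu(\mathbf T)$ under that law, whence $\EE[M_{\mathbf k}^2\mid N_{\mathbf k}=n]=\overline\mu_{\mathbf k}^2+n^{-1}\Var(\mu(\mathbf T)\mid\mathbf T\in I_{\mathbf k})$ with $\overline\mu_{\mathbf k}:=\EE[\mu(\mathbf T)\mid\mathbf T\in I_{\mathbf k}]=p_{\mathbf k}^{-1}\int_{I_{\mathbf k}}\mu f_{\mathbf T}$; hence $\EE[M_{\mathbf k}^2]=\overline\mu_{\mathbf k}^2-\PP(N_{\mathbf k}=0)\,\overline\mu_{\mathbf k}^2+\EE[\1\{N_{\mathbf k}\ge1\}/N_{\mathbf k}]\,\Var(\mu(\mathbf T)\mid\mathbf T\in I_{\mathbf k})$. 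By independence of $\mathbf T_1,\mathbf T_2$, the leading term $\overline\mu_{\mathbf k}^2$ equals $p_{\mathbf k}^{-2}\EE[\mu(\mathbf T_1)\mu(\mathbf T_2)\1\{\mathbf T_1,\mathbf T_2\in I_{\mathbf k}\}]$, the announced main contribution; the two remaining terms are controlled by the binomial bounds above together with $\overline\mu_{\mathbf k}^2=\mu^2(\mathbf t_{\mathbf k})+o(1)$ and $\Var(\mu(\mathbf T)\mid\mathbf T\in I_{\mathbf k})=o(1)$ (uniform continuity of $\mu$ on the compact $\mathcal T$), yielding a contribution $\lesssim(\mathcal K^D/C_0\overline M)\,\mu^2(\mathbf t_{\mathbf k})$ to $\mathcal G_{\mathbf k}$. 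Collecting $\mathcal G_{\mathbf k}=\EE[M_{\mathbf k}^2]-\overline\mu_{\mathbf k}^2+\EE[E_{\mathbf k}^2]$ then gives the claimed expansion of $\mathfrak C_{1,\mathbf k}$ and the stated bound, the modulus-of-continuity corrections being lower order and absorbed into $\lesssim$; the statement for $\mathfrak C_{2,\mathbf k}$ follows verbatim upon replacing $I_{\mathbf k}$, $p_{\mathbf k}$, $\mathbf t_{\mathbf k}$ by $I_{\mathbf k^+}$, $p_{\mathbf k^+}$, $\mathbf t_{\mathbf k^+}$.
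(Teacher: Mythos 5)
Your proof is correct and follows essentially the same route as the paper's: split $Y_{i,m}$ into $\mu(\mathbf T_{i,m})+\eta_{i,m}$, condition on the design so the cross term vanishes, and control everything through the binomial count $N_{\mathbf k}$ of in-cube points — the bounds $\EE[(N_{\mathbf k}+1)^{-1}]\leq \{(\overline M+1)p_{\mathbf k}\}^{-1}$ and $\PP(N_{\mathbf k}=0)=(1-p_{\mathbf k})^{\overline M}$ are exactly the ingredients visible in the statements of Proposition \ref{pas_encore} and Lemma \ref{lem2_pas_encore}. The combinatorial identification of the within-trajectory pair fraction as $\sum_i M_i(M_i-1)/\{\overline M(\overline M-1)\}$ via exchangeability of the slots given $N_{\mathbf k}=n$ is the right (and correctly executed) key step.
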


%%%%%%%%%%%%%%%%%%%%%%
%%%%%%%%%%%%%%%%%%%%%%

\medskip

\begin{lemma}[Representation of $\mathfrak C_{3,\mathbf k}$ in \eqref{def_les_frakC}]\label{lem2_pas_encore}
We have the following identity
\begin{multline}\label{eq:crois_lem}
\mathfrak C_{3,\mathbf k}^{(j)}
	= \frac{1}{p_{\mathbf k}p_{\mathbf k^+}}\EE\left[\mu(\mathbf T_1)\mu(\mathbf T_2)\mathbbm1_{\{ (\mathbf T_1,\mathbf T_2)\in I_{\mathbf k} \times I_{\mathbf k^+}\}}  \right]\times \{1- \rho_3\}\\
	+\frac{\sum_{i=1}^N M_i(M_i-1)}{\overline M (\overline M-1)}\EE\left[\gamma( \mathbf T_1,\mathbf T_2) \mathbbm1_{\{ (\mathbf T_1,\mathbf T_2)\in I_{\mathbf k} \times I_{\mathbf k^+}\}}\right]\times\{1- \rho_3\},
\end{multline}
with 
$
\rho_3= (1-p_{\mathbf k})^{\overline M}+ (1-p_{\mathbf k^+})^{\overline M} - (1-p_{\mathbf k}-p_{\mathbf k^+})^{\overline M}.
$
\end{lemma}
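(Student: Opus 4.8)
The plan is to evaluate $\mathfrak C_{3,\mathbf k}$ by conditioning successively on ever coarser $\sigma$-fields, exploiting the disjointness $I_{\mathbf k}\cap I_{\mathbf k^+}=\emptyset$. Write $N_{\mathbf k}=\sum_{(i,m)}\mathbbm{1}\{\mathbf T_{i,m}\in I_{\mathbf k}\}$ and likewise $N_{\mathbf k^+}$, so that $F_{\mathbf k}(\mathbf T_{i,m})=\mathbbm{1}\{\mathbf T_{i,m}\in I_{\mathbf k}\}/N_{\mathbf k}$ under the $0/0=0$ rule. First I would condition on the design $\tobs$ inside \eqref{def_les_frakC}: since $F_{\mathbf k},F_{\mathbf k^+}$ are $\tobs$-measurable, \eqref{model-eq2} and the conditional covariance \eqref{structure_dep} give $\EE[Y_{i,m}Y_{q,m'}\mid\tobs]=\mu(\mathbf T_{i,m})\mu(\mathbf T_{q,m'})+\mathbbm{1}\{i=q\}\{\gamma(\mathbf T_{i,m},\mathbf T_{q,m'})+\mathbbm{1}\{m=m'\}\sigma^2(\mathbf T_{i,m})\}$. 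On the support of the factor $\mathbbm{1}\{\mathbf T_{i,m}\in I_{\mathbf k}\}\mathbbm{1}\{\mathbf T_{q,m'}\in I_{\mathbf k^+}\}$ one automatically has $(i,m)\neq(q,m')$, and also $m\neq m'$ when $i=q$, so both the summation restriction and the $\sigma^2$ term drop out. This leaves $\mathfrak C_{3,\mathbf k}=\EE[A]+\EE[B]$, where $A$ is the product of the two cell-restricted empirical means of $\mu$ over $I_{\mathbf k}$ and over $I_{\mathbf k^+}$, and $B=(N_{\mathbf k}N_{\mathbf k^+})^{-1}\sum_{i}\sum_{m,m'}\gamma(\mathbf T_{i,m},\mathbf T_{i,m'})\mathbbm{1}\{\mathbf T_{i,m}\in I_{\mathbf k}\}\mathbbm{1}\{\mathbf T_{i,m'}\in I_{\mathbf k^+}\}$.

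For $\EE[A]$ I would condition on the count pair $(N_{\mathbf k},N_{\mathbf k^+})$: given $N_{\mathbf k}=n_1\ge1$ and $N_{\mathbf k^+}=n_2\ge1$, the observations lying in $I_{\mathbf k}$ (resp.\ $I_{\mathbf k^+}$) are i.i.d.\ with the law of $\mathbf T$ restricted to $I_{\mathbf k}$ (resp.\ $I_{\mathbf k^+}$), and the two groups are independent because the cells are disjoint; hence $\EE[A\mid N_{\mathbf k}=n_1,N_{\mathbf k^+}=n_2]=\EE[\mu(\mathbf T)\mid\mathbf T\in I_{\mathbf k}]\,\EE[\mu(\mathbf T)\mid\mathbf T\in I_{\mathbf k^+}]$, which is a constant free of $(n_1,n_2)$ as soon as both are $\ge1$, while $A=0$ otherwise. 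By independence of $\mathbf T_1,\mathbf T_2$ this constant is $p_{\mathbf k}^{-1}p_{\mathbf k^+}^{-1}\EE[\mu(\mathbf T_1)\mu(\mathbf T_2)\mathbbm{1}\{(\mathbf T_1,\mathbf T_2)\in I_{\mathbf k}\times I_{\mathbf k^+}\}]$, and multiplying it by $\PP(N_{\mathbf k}\ge1,N_{\mathbf k^+}\ge1)$ yields the first term; here $\PP(N_{\mathbf k}\ge1,N_{\mathbf k^+}\ge1)=1-(1-p_{\mathbf k})^{\overline M}-(1-p_{\mathbf k^+})^{\overline M}+(1-p_{\mathbf k}-p_{\mathbf k^+})^{\overline M}=1-\rho_3$ by inclusion--exclusion applied to the multinomial vector $(N_{\mathbf k},N_{\mathbf k^+},\overline M-N_{\mathbf k}-N_{\mathbf k^+})$, using that $I_{\mathbf k}\cup I_{\mathbf k^+}$ has probability $p_{\mathbf k}+p_{\mathbf k^+}$.

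For $\EE[B]$ I would condition instead on the $\sigma$-field $\mathcal G$ generated by all cell-membership indicators. Given $\mathcal G$ the design locations inside each cell are again conditionally i.i.d.\ cell-restricted, so on the relevant event $\EE[\gamma(\mathbf T_{i,m},\mathbf T_{i,m'})\mid\mathcal G]=\bar\gamma:=\EE[\gamma(\mathbf T_1,\mathbf T_2)\mid\mathbf T_1\in I_{\mathbf k},\mathbf T_2\in I_{\mathbf k^+}]=p_{\mathbf k}^{-1}p_{\mathbf k^+}^{-1}\EE[\gamma(\mathbf T_1,\mathbf T_2)\mathbbm{1}\{(\mathbf T_1,\mathbf T_2)\in I_{\mathbf k}\times I_{\mathbf k^+}\}]$, and pulling it out gives $\EE[B]=\bar\gamma\,\EE[W/(N_{\mathbf k}N_{\mathbf k^+})]$ with $W=\sum_i N_{i,\mathbf k}N_{i,\mathbf k^+}$, $N_{i,\mathbf k}$ the number of points of curve $i$ in $I_{\mathbf k}$. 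The step I expect to be the main obstacle is the exact evaluation of $\EE[W/(N_{\mathbf k}N_{\mathbf k^+})]$, because the numerator couples observations of a common curve while the denominator couples all observations; I would handle it by conditioning on $(N_{\mathbf k},N_{\mathbf k^+})=(n_1,n_2)$ and using that, for i.i.d.\ cell labels, the conditional law given the counts is uniform over all arrangements with those counts, so every fixed ordered pair of distinct observations has its first element in $I_{\mathbf k}$ and its second in $I_{\mathbf k^+}$ with probability $n_1 n_2/\{\overline M(\overline M-1)\}$. Summing over the $\sum_i M_i(M_i-1)$ ordered pairs of distinct same-curve observations gives $\EE[W\mid n_1,n_2]=n_1 n_2\sum_i M_i(M_i-1)/\{\overline M(\overline M-1)\}$, hence $\EE[W/(N_{\mathbf k}N_{\mathbf k^+})\mid N_{\mathbf k}\ge1,N_{\mathbf k^+}\ge1]=\sum_i M_i(M_i-1)/\{\overline M(\overline M-1)\}$ is constant and is $0$ off that event; multiplying by $\PP(N_{\mathbf k}\ge1,N_{\mathbf k^+}\ge1)=1-\rho_3$ and by $\bar\gamma$ produces the second term. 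Adding the two contributions gives \eqref{eq:crois_lem}; throughout one must keep track of the $0/0=0$ convention and of the restriction to $\{N_{\mathbf k}\ge1,N_{\mathbf k^+}\ge1\}$, which is precisely the source of the factor $1-\rho_3$.
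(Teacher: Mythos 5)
Your proof is correct and proceeds by what is essentially the intended argument: conditioning on the noise and the within-curve dependence via \eqref{structure_dep}, then on the cell-membership labels so that the conditionally i.i.d.\ restricted laws factor out the $\mu\mu$ and $\gamma$ integrals, with the inclusion--exclusion probability $\PP(N_{\mathbf k}\geq 1,\,N_{\mathbf k^+}\geq 1)=1-\rho_3$ and the exchangeability count $n_1n_2/\{\overline M(\overline M-1)\}$ producing exactly the two prefactors in \eqref{eq:crois_lem}. One useful by-product of your computation: the second term you obtain carries the normalization $1/(p_{\mathbf k}p_{\mathbf k^+})$, which is absent from the displayed statement of the lemma but present where the lemma is invoked in the proof of Proposition \ref{pas_encore}; your derivation thus confirms that the lemma as printed has a typographical omission and that the normalized version is the correct one.
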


\medskip

\begin{proof}[Proof of Proposition \ref{pas_encore}]

Gathering Lemma \ref{lem1_pas_encore}, Lemma \ref{lem2_pas_encore} and \eqref{def_les_frakC},  we have
\begin{multline}
\EE \Bigg [\Bigg\{ \sum _{(i, m)}  Y_{ i,m} \{F_{\mathbf k}(\mathbf T_{i,m}) -  F_{\mathbf k^+}(\mathbf T_{i,m})\} \Bigg\}^2 \Bigg]  
= \frac{1}{p_{\mathbf k}^2 } \EE\left[ \mu(\mathbf T_1)\mu(\mathbf T_2)\mathbbm1_{\{ \mathbf T_1, \mathbf T_2\in I_{\mathbf k}\}} \right] \\
+  \frac{1}{p_{\mathbf k^+}^2 } \EE\left[\mu(\mathbf T_1)\mu(\mathbf T_2)\mathbbm1_{\{ \mathbf T_1,\mathbf T_2\in I_{\mathbf k^+}\}} \right]
- 2 \frac{1}{p_{\mathbf k}p_{\mathbf k^+}}\EE\left[\mu(\mathbf T_1)\mu(\mathbf T_2)\mathbbm1_{\{ (\mathbf T_1,\mathbf T_2)\in I_{\mathbf k} \times I_{\mathbf k^+}\}}  \right] 
+ \mathcal G\\
=   \frac{1}{p_{\mathbf k}^2p_{\mathbf k^+}^2}\EE\left[\left\{   \mu(\mathbf T_1) - \mu(\mathbf T_2)\right\} \left\{\mu(\mathbf T_1^\prime) -\mu(\mathbf T_2^\prime)\right\}  \mathbbm1_{\{\mathbf T_1,\mathbf T_1^\prime\in I_{\mathbf k} \}}\mathbbm1_{\{ \mathbf T_2,\mathbf T_2^\prime\in  I_{\mathbf k^+}\}}\right] +\mathcal G\\
=  \frac{1}{p_{\mathbf k}^2p_{\mathbf k^+}^2}\EE\left[\left\{   \mu(\mathbf T_1) - \mu(\mathbf T_2)\right\}  \mathbbm1_{\{ (\mathbf T_1,\mathbf T_1)^\prime\in I_{\mathbf k}\times I_{\mathbf k^+}  \}}\right] ^2+\mathcal G ,
\end{multline}
where  $\mathbf T_1, \mathbf T_2, \mathbf T_1^\prime$, $\mathbf T_2^\prime$ are random copies of $\mathbf T$ and $\mathcal G$ is a reminder that can be decomposed
\begin{multline}
\mathcal G=\mathcal G_{\mathbf k} +\mathcal G_{\mathbf k^+} -2\frac{\sum_{i=1}^N M_i(M_i-1)}{p_{\mathbf k} p_{\mathbf k^+}\overline M (\overline M-1)}\EE\left[\gamma( \mathbf T_1,\mathbf T_2) \mathbbm1_{\{ (\mathbf T_1,\mathbf T_2)\in I_{\mathbf k} \times I_{\mathbf k^+}\}}\right]\times\{1- \rho_3\}\\
+ \frac{2}{p_{\mathbf k}p_{\mathbf k^+}}\EE\left[ \mu(\mathbf T_1)\mu(\mathbf T_2)\mathbbm1_{\{ (\mathbf T_1,\mathbf T_2)\in I_{\mathbf k} \times I_{\mathbf k^+}\}}  \right]\times  \rho_3,
\end{multline}
with $\mathcal G_{\mathbf k}$ as in Lemma \ref{lem1_pas_encore} and 
$
\rho_3= (1-p_{\mathbf k})^{\overline M}+ (1-p_{\mathbf k^+})^{\overline M} - (1-p_{\mathbf k}-p_{\mathbf k^+})^{\overline M}.
$
With similar calculations as used to bound  $\mathcal G_{\mathbf k}$, we get    the following bound  for the remainder $\mathcal G$~: 
\begin{multline}
 \left| \mathcal G\right|  \lesssim  \frac{\mathcal K^D}{C_0 \overline M } \times\left\{ \mu^2 ( \mathbf  t_{\mathbf k}) + \mu^2(\mathbf  t_{\mathbf k^+})+\sigma^2( \mathbf  t_{\mathbf k}) +\sigma^2(\mathbf  t_{\mathbf k^+})+ \gamma(\mathbf  t_{\mathbf k}, \mathbf  t_{\mathbf k})  + \gamma(\mathbf  t_{\mathbf k^+}, \mathbf  t_{\mathbf k^+})\right\} \\+ \frac{\sum_{i=1}^N M_i(M_i-1)}{\overline M (\overline M -1)}\{ \gamma(\mathbf  t_{\mathbf k}, \mathbf  t_{\mathbf k}) + \gamma(\mathbf  t_{\mathbf k^+}, \mathbf  t_{\mathbf k^+})-2\gamma( \mathbf  t_{\mathbf k}, \mathbf  t_{\mathbf k^+}) \}.  
\end{multline}
\end{proof}

\begin{proof}[Proof of Proposition \ref{pour_demain}]
Recall that $g(\mathcal K)$ was defined in \eqref{g_cal}, and set 
\begin{equation}
%g(\mathcal K)=\frac{1}{\mathcal K^D-1} \sum_{\mathbf k \in\mathcal X(\mathcal K)}\mathbb E[\widehat b_{\mathbf k}], \qquad 
\widehat{ g}(\mathcal K)=\frac{1}{\mathcal K^D-1} \sum_{\mathbf k \in \mathcal X(\mathcal K)}\widehat b_{\mathbf k}.
\end{equation}
By the definitions of $\widehat H_j$ and $\widetilde H_j$ in \eqref{estimator_Hj} and \eqref{proxy_Hj}, for any $\eta >0$, we have 
\begin{multline}
\mathbb P \left(|\widehat H_j - \widetilde H_j|\geq \eta\right)=  \mathbb P\left (\widehat{g}(\mathcal K) -g(\mathcal K)\geq (\mathcal K^{2\eta}-1) (g(\mathcal K)+ \mathcal K^{-2H_j})  \right)\\+  \mathbb P\left (\widehat{g}(\mathcal K) -g(\mathcal K)\leq (\mathcal K^{-2\eta}-1) (g(\mathcal K)+ \mathcal K^{-2H_j})  \right).
\end{multline}
Using the inequality $\log(x+1) \leq x$ with $x = \mathcal K^{2\eta} - 1$, the identity
 $ 1-\mathcal K^{-2\eta} = \mathcal K^{-2\eta}  \{\mathcal K^{2\eta}-1\}$, and  Lemma \ref{concentration22}, we get
\begin{multline}
\mathbb P \left(|\widehat H_j - \widetilde H_j|\geq \eta\right)\leq 2 \exp \left ( -\frac{\mathcal K^D -1}{ \varrho } \times \frac{4\eta^2 \log^2(\mathcal K)\mathcal K^{-4\eta} \{g(\mathcal K) + \mathcal K^{-2H_j}\}^2}{ 2g(\mathcal K) + 2\eta\log(\mathcal K)\mathcal K^{-2\eta} \{g(\mathcal K) + \mathcal K^{-2H_j}\} } \right) \\
+2\exp \left (- \mathfrak k_1 \sqrt{\mathcal N}\sqrt{\varrho- g(\mathcal K)} \right)+ 2 N\exp\left(- 2C_0^2\sqrt{\underline C_M}\mathcal K^{-2D}\mathfrak m^{1/2}\right).
\end{multline}
Finally, choosing 
$\varrho =\log^2(N) \{\mathfrak k_1 ^2 \mathcal N\}^{-1} + g(\mathcal K)$, noting that $ g(\mathcal K, \varrho)\leq g(\mathcal K)$, and using the fact that the map $x\mapsto x^2/(a+x)$ with $a>0$ is strictly increasing, 
we get  
\begin{multline}
\mathbb P \left(|\widehat H_j - \widetilde H_j|\geq \eta\right)\leq 2\exp \left ( -  \frac{\mathfrak k_1^2 (\mathcal K^{D}-1) \mathcal N}{ \log^2(N)+\mathfrak k_1^2\mathcal N g(\mathcal K)}\times \frac{2\eta^2 g(\mathcal K)\log^2(\mathcal K) \mathcal K^{-4\eta}}{ 1 + \eta \log(\mathcal K)\mathcal K^{-2\eta}  } \right)\\
+\frac{2}{N}+ 2 N\exp\left(- 2C_0^2\sqrt{\underline C_M}\mathcal K^{-2D}\mathfrak m^{1/2}\right).
\end{multline}
\end{proof}

\begin{lemma}\label{concentration22}
Assume that conditions  (D\ref{assuD1}) to (D\ref{assuD4}),   (D\ref{zerport_main}), (H\ref{assuH2}) and (H\ref{assuH12}) are satisfied. Then, a positive constant $\mathfrak k_1$ exists such that for  $\mathcal K$ sufficiently large and any $\varrho>0$ satisfying 
$$
\varrho > g(\mathcal K, \varrho):=(\mathcal K ^D- 1)^{-1} \sum_{\mathbf k \in \mathcal X(\mathcal K)} \mathbb E\left[\widehat{b}_{\mathbf k} \mathbbm{1}_{\{ \widehat{b}_{\mathbf k} \leq \varrho\} }\right],
$$ 
it holds
\begin{multline}\label{chihaja}
\mathbb P\!\left(\!\left|\frac{1}{\mathcal K ^D\!-\! 1}\! \!\sum_{\mathbf k \in \mathcal X(\mathcal K)}\! \!\widehat{b}_{\mathbf k} \!- \!\mathbb E\left[\widehat{b}_{\mathbf k} \right]\! \right| \! \geq \!\eta\! \right)\!\! \leq\!2\exp \!\left (\!\!-  \frac{(\mathcal K ^D\!-\!1)\eta^2}{ \varrho\{2g(\mathcal K, \varrho) \!+ \!\eta\}}\!\! \right) 
\\ +2\exp \left (\!- \mathfrak k_1 \sqrt{\mathcal  N(\varrho-\! g(\mathcal K, \varrho))} \right)+   2 N\exp\left(- 2\sqrt{\underline C_M}C_0^2\mathcal K ^{-2D}\mathfrak m^{1/2}\right)   .
\end{multline}
\end{lemma}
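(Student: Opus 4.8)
The plan is to decompose the error $\widehat g(\mathcal K)-\mathbb E[\widehat g(\mathcal K)]$, where $\widehat g(\mathcal K)=(\mathcal K^D-1)^{-1}\sum_{\mathbf k\in\mathcal X(\mathcal K)}\widehat b_{\mathbf k}$, into a ``truncated'' part controlled by a Bernstein-type inequality and a ``tail'' part handled by a moment/subgaussianity estimate together with a union bound over the $N$ trajectories. First I would introduce the truncation level $\varrho$ and write $\widehat b_{\mathbf k}=\widehat b_{\mathbf k}\mathbbm 1_{\{\widehat b_{\mathbf k}\le\varrho\}}+\widehat b_{\mathbf k}\mathbbm 1_{\{\widehat b_{\mathbf k}>\varrho\}}$, so that $\widehat g(\mathcal K)-\mathbb E[\widehat b_{\mathbf k}] = \big(\overline g_\varrho - \mathbb E[\widehat b_{\mathbf k}\mathbbm 1_{\{\widehat b_{\mathbf k}\le\varrho\}}]\big) + \big(\text{tail average}\big)$, where $\overline g_\varrho$ is the average of the truncated variables and $g(\mathcal K,\varrho)$ in the statement is exactly the mean of the truncated variables. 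On the event that no $\widehat b_{\mathbf k}$ exceeds $\varrho$, $\widehat g(\mathcal K)$ coincides with $\overline g_\varrho$, so the first term is a sum of bounded (by $\varrho$) random variables with mean $g(\mathcal K,\varrho)$; Bernstein's inequality then gives the first exponential term $2\exp\!\big(-(\mathcal K^D-1)\eta^2/(\varrho\{2g(\mathcal K,\varrho)+\eta\})\big)$, using that the variance of each truncated $\widehat b_{\mathbf k}$ is at most $\varrho$ times its mean. A subtlety here is the dependence among the $\widehat b_{\mathbf k}$ for different $\mathbf k$ (they share the same data $\tobs$); I expect this to be handled either by the independence across trajectories structure inherited from the weights $F_{\mathbf k}$, or by an $\mathbb E[\,\cdot\mid\tobs\,]$-type argument exploiting that conditionally on the design the $\widehat b_{\mathbf k}$ are sums over independent curves.

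Second I would bound the probability that the truncation is active, i.e. $\mathbb P(\exists\,\mathbf k:\widehat b_{\mathbf k}>\varrho)$. Here the key is the representation of $\widehat b_{\mathbf k}$ from \eqref{def_b_kgras} as a squared weighted increment of the $Y_{i,m}$'s over adjacent hypercubes $I_{\mathbf k}, I_{\mathbf k^+}$; bounding $|\mu|$, $\sigma$, $\gamma$ by their sup norms and using the subgaussian moment assumption (D\ref{assuD6}) on $X-\mu$ and $\varepsilon$, one controls high moments of $\widehat b_{\mathbf k}$, hence $\mathbb P(\widehat b_{\mathbf k}>\varrho)\lesssim\exp(-c\sqrt{\mathcal N(\varrho-g(\mathcal K,\varrho))})$ after a Markov/Chernoff step; summing over the $\mathcal K^D-1$ indices and absorbing the polynomial factor into the exponent (for $\mathcal K$ large) yields the second exponential term. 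The third exponential term $2N\exp(-2\sqrt{\underline C_M}C_0^2\mathcal K^{-2D}\mathfrak m^{1/2})$ is the price for ensuring that every hypercube $I_{\mathbf k}$ contains ``enough'' design points so that the normalizations $F_{\mathbf k}$ are well-behaved: by assumption (H\ref{assuH12}) each $I_{\mathbf k}$ has probability mass at least $C_0\mathcal K^{-D}$, so by a binomial (Chernoff) bound the number of points of trajectory $i$ falling in $I_{\mathbf k}$ is, with the stated probability, bounded below, and a union bound over $i\le N$ (and absorbing the count over $\mathbf k$) gives this term; on the complementary event one simply uses the crude deterministic bound $0\le\widehat b_{\mathbf k}\le\varrho$ or discards those terms.

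Combining the three pieces by a final union bound gives \eqref{chihaja}. The main obstacle I anticipate is the Bernstein step for the truncated sum: getting the variance proxy right (namely $\varrho\, g(\mathcal K,\varrho)$ rather than a cruder $\varrho^2$) and correctly handling the inter-$\mathbf k$ dependence so that a Bernstein-type bound applies — this likely requires conditioning on the design $\tobs$, applying a conditional Bernstein inequality exploiting independence across the $N$ curves, and then controlling the residual randomness from $\tobs$ via the occupancy bound that produces the third term. The rest (moment bounds for $\widehat b_{\mathbf k}$ under (D\ref{assuD6}), occupancy Chernoff bound) is routine given the assumptions and the earlier Proposition \ref{pas_encore}.
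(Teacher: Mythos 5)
Your three-part architecture (truncate at level $\varrho$, Bernstein on the truncated average with variance proxy $\varrho\, g(\mathcal K,\varrho)$, a subexponential tail bound for $\PP(\max_{\mathbf k}\widehat b_{\mathbf k}>\varrho)$ producing the $\sqrt{\mathcal N(\varrho-g)}$ term, and a Chernoff occupancy bound with a union over the $N$ trajectories producing the third term) is exactly what the form of \eqref{chihaja} dictates, and it is the right skeleton. Two bookkeeping points you gloss over: the truncation bias $\EE[\widehat b_{\mathbf k}\mathbbm 1_{\{\widehat b_{\mathbf k}>\varrho\}}]$ has to be absorbed somewhere (the first exponential keeps the full $\eta$, so this bias must be folded into the second term together with the tail probability), and the tail bound genuinely needs the subgaussian moment condition (D\ref{assuD6}), which you correctly invoke even though the lemma as stated omits it from its hypothesis list.

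The genuine gap is the Bernstein step itself. To get the factor $(\mathcal K^D-1)$ in the first exponent you need the truncated $\widehat b_{\mathbf k}$ to be independent, or quantifiably weakly dependent, \emph{across cells} $\mathbf k$. Neither of the two mechanisms you propose delivers this: independence holds across the curves $X_i$, not across cells, and conditioning on $\tobs$ does not help because every curve contributes observations to many cells and $\gamma(\mathbf s,\mathbf t)$ is not small for $\mathbf s\in I_{\mathbf k}$, $\mathbf t\in I_{\mathbf j}$ with $\mathbf k\neq\mathbf j$ distant. Concretely, the cell averages $\bar Y_{\mathbf k}=\sum_{(i,m)}Y_{i,m}F_{\mathbf k}(\mathbf T_{i,m})$ have $\operatorname{Cov}(\bar Y_{\mathbf k},\bar Y_{\mathbf j})\approx N^{-1}\gamma(\mathbf t_{\mathbf k},\mathbf t_{\mathbf j})$, which is of the same order as their variances. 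What makes the argument work is that $\widehat b_{\mathbf k}$ is built from \emph{differences over adjacent cells}, so the relevant cross-$\mathbf k$ covariances involve second-order increments of $\gamma$ over scales $\mathcal K^{-1}$ and are $O(\mathcal K^{-2H_\gamma})$ relative to the leading variance terms; one must either make this negligibility quantitative and feed it into a Bernstein inequality for weakly correlated variables, or restructure the sum so that independence across curves can be exploited directly. Without that ingredient the claimed effective sample size $\mathcal K^D-1$ in the first exponential does not follow from your plan.
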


\medskip

\begin{proof}[Proof of Proposition \ref{pour_auj}]
By definition, $\{ | j_0/\mathcal J \!-\! \widehat j_0/\mathcal J|\! \geq \!\mathcal J^{-1}\}= \left\{\widehat j_0 \ne j_0\right\}$. Next, we have
\begin{equation}
\left\{\widehat j_0 \ne j_0\right\}
\subset \left (\bigcup_{j=1}^{ j_0-1} \{ |H_j - \widehat H_j| >\log(N)^{-\tau^\prime}\} \right)\cup \left(\bigcup_{j=j_0+1} ^{\mathcal J}\{ |H_j - \widehat H_j| \leq \log(N)^{-\tau^\prime}\} \right)
\end{equation}
and  
\begin{multline}
\mathbb P\left(\widehat j_0 \ne j_0\right) \leq \sum_{j=1}^{ j_0-1} \mathbb P\left(|H_j - \widehat H_j| >\log(N)^{-\tau^\prime}\right)  + \sum_{j=j_0+1} ^{\mathcal J}\mathbb P \left( |H_j - \widehat H_j| \leq \log(N)^{-\tau^\prime}\right)\\
\leq \sum_{j=1}^{ j_0-1} \mathbb P\left(|\widetilde H_j - \widehat H_j| > \frac{1}{2}\log(N)^{-\tau^\prime}\right) + \sum_{j=1}^{ j_0-1} \mathbb P\left(|H_j - \widetilde H_j| >\frac{1}{2} \log(N)^{-\tau^\prime}\right) \\
+\sum_{j=j_0+1} ^{\mathcal J}\mathbb P \left( |H_j - \widehat H_j| \leq\log(N)^{-\tau^\prime}\right).
\end{multline}
By \eqref{bias_Hj} and since $\mathcal J= \log(N)^{r^\prime}$, we have
    \begin{equation}\label{bias_Hjb}
\max_{1\leq j \leq j_0-1}	|H_j - \widetilde{H}_j| \leq\frac{1}{2\log(\mathcal K_\tau)} \mathcal K_\tau^{2H_{j_0}}g(\mathcal{K_\tau}) \lesssim \frac{1}{\log(\mathcal K_\tau)} \mathcal J^{-r} \mathcal K_\tau^{-1/\mathcal J}\lesssim \log(N)^{-r r^\prime - \tau}.
\end{equation}
This, and because we impose $ -r r^\prime - \tau< -\tau^{\prime}$, imply that, for $N$ sufficiently large,
$$
\sum_{j=1}^{ j_0-1} \mathbb P\Big(| H_j - \widetilde H_j| > \frac{1}{2}\log(N)^{-\tau^\prime}\Big) =0.
$$
Next, Proposition \ref{pour_demain}, applied with $j\in\{1,\ldots, j_0-1\}$, implies
\begin{multline}
\sum_{j=1}^{ j_0-1} \mathbb P \Big(|\widehat H_j - \widetilde H_j|\geq  \frac{1}{2}\log(N)^{-\tau^\prime}\Big)\leq\frac{2(j_0-1)}{N}+ 2 N\exp\left(- 2C_0^2\sqrt{\underline C_M}\mathcal K^{-2D}\mathfrak m^{1/2}\right)\\
+ 2(j_0-1)\exp \left ( -  \frac{\mathfrak k_1^2 (\mathcal K_{\tau}^{D}-1) \mathcal N}{ \log^2(N)+\mathfrak k_1^2\mathcal N g(\mathcal K_{\tau})}\times \frac{ g(\mathcal K_{\tau})\log(N)^{2\tau-2\tau^\prime} \mathcal K_{\tau}^{-2\log(N)^{-\tau^\prime}}}{ 2 + \log(N)^{\tau- \tau^\prime}\mathcal K_{\tau}^{-\log(N)^{-\tau^\prime}}  } \right).
%2(j_0-1)\exp \big (- \mathfrak c  N\mathcal K_\tau^{D}g(\mathcal K_\tau) \log(\overline M)^{2+(\tau - \tau^\prime)} \big),
\end{multline}
Since $  \log^2(N)\ll \mathcal N g(\mathcal K_{\tau})$ and $\log(N)^{\tau- \tau^\prime}\mathcal K_{\tau}^{-\log(N)^{-\tau^\prime}}\!\!= \log(N)^{\tau- \tau^\prime}\exp( - \log(N)^{\tau - \tau^\prime}) \ll 1$, we obtain 
\begin{multline}\label{boundj45}
\sum_{j=1}^{ j_0-1} \mathbb P \Big(|\widehat H_j - \widetilde H_j|\geq  \frac{1}{2}\log(N)^{-\tau^\prime}\Big)\leq\frac{2(j_0-1)}{N}+ 2 (j_0-1)N\exp\left(- 2C_0^2\sqrt{\underline C_M}\mathcal K^{-2D}\mathfrak m^{1/2}\right)\\
+ 2(j_0-1)\exp \left ( -  \mathfrak c \log(N)^{2\tau-2\tau^\prime} \exp(D\log(N)^\tau - 2\log(N)^{\tau - \tau^\prime}) \right).
%2(j_0-1)\exp \big (-   N\mathcal K_\tau^{D}g(\mathcal K_\tau) \log(\overline M)^{2+(\tau - \tau^\prime)} \big),
\end{multline}

for some constant $\mathfrak c$. 
Finally, we know that the quantity $g(\mathcal K_\tau)$ in \eqref{g_cal} does not decrease faster than  $\mathcal{K}_\tau^{-2(\alpha+\epsilon)}$, for any small $\epsilon >0$. Thus, for $H_j>\alpha$ and $\mathcal K_\tau$ sufficiently large, we have $\widetilde H_j - H_j\geq \mathfrak q >0$, for some constant $\mathfrak q$. Therefore, for any $j\in \{j_0+1 , \ldots, \mathcal J\},$ 
\begin{multline}
\mathbb P \left( |H_j - \widehat H_j| \leq\log(N)^{-\tau^\prime}\right)=\mathbb P \left( |H_j - \widehat H_j| \leq  \log(N)^{-\tau^\prime}, | \widehat H_j - \widetilde H_j| < \frac{1}{2} \log(N)^{-\tau^\prime}\right)\\
+\mathbb P \left( |H_j - \widehat H_j| \leq \log(N)^{-\tau^\prime}, | \widehat H_j - \widetilde H_j| \geq \frac{1}{2} \log(N)^{-\tau^\prime}\right)\\
\leq\underbrace {\mathbb P \left( H_j - \widetilde H_j \leq\frac{1}{2}  \log(N)^{-\tau^\prime}\right)}_{ = 0 \text{ for } \mathcal K_\tau\text{ sufficiently large}}+\mathbb P \left( | \widehat H_j - \widetilde H_j| \geq \frac{1}{2} \log(N)^{-\tau^\prime}\right).
\end{multline}
Then, applying  Proposition \ref{pour_demain} with $j\in\{j_0+1,\ldots, \mathcal J\}$, we deduce the same type of bound as in \eqref{boundj45}, with $j_0$ replaced by $\mathcal J-j_0$. 
Gathering facts, we obtain 
\begin{multline}
%\mathbb P\left( \left| \frac{j_0}{\mathcal J} - \frac{\widehat j_0}{\mathcal J}\right| \geq \mathcal J^{-1}\right)
\mathbb P\left(\widehat j_0 \ne j_0\right) \leq\frac{2(\mathcal J-1)}{N}+ 2 (\mathcal J-1)N\exp\left(- 2C_0^2\sqrt{\underline C_M}\mathcal K^{-2D}\mathfrak m^{1/2}\right)\\
+ 2(\mathcal J-1)\exp \left ( -  \mathfrak c \log(N)^{2\tau-2\tau^\prime} \exp(D\log(N)^\tau - 2\log(N)^{\tau - \tau^\prime}) \right).
\end{multline}
for some constant $\mathfrak c$. Now, the proof of Proposition \ref{pour_auj} is complete.
\end{proof}
%%%%%%%%%%%%%%%%%%%%%%%%%%
%%%%%%%%%%%%%%%%%%%%%%%%%%
%%%%%%%%%%%%%%%%%%%%%%%%%%
%%%%%%%%%%%%%%%%%%%%%%%%%%

\bibliographystyle{apalike}
\bibliography{clean_refs.bib}

\begin{thebibliography}{}

\bibitem[Bakhvalov, 2015]{BAKHVALOV2015502}
Bakhvalov, N.~S. (2015).
\newblock On the approximate calculation of multiple integrals.
\newblock {\em J. Complex.}, 31(4):502--516.

\bibitem[Baraud et~al., 2001]{BCV2001}
Baraud, Y., Comte, F., and Viennet, G. (2001).
\newblock Adaptive estimation in autoregression or {$\beta$}-mixing regression
  via model selection.
\newblock {\em Ann. Statist.}, 29(3):839--875.

\bibitem[Cai et~al., 2014]{CBadapt2014}
Cai, T.~T., Low, M., and Ma, Z. (2014).
\newblock Adaptive confidence bands for nonparametric regression functions.
\newblock {\em J. Amer. Statist. Assoc.}, 109(507):1054--1070.

\bibitem[Cai and Yuan, 2011]{CY2011}
Cai, T.~T. and Yuan, M. (2011).
\newblock {Optimal estimation of the mean function based on discretely sampled
  functional data: Phase transition}.
\newblock {\em Ann. Statist.}, 39(5):2330 -- 2355.

\bibitem[Chatterjee and Meckes, 2008]{Chatterjee2008}
Chatterjee, S. and Meckes, E. (2008).
\newblock Multivariate normal approximation using exchangeable pairs.
\newblock {\em ALEA Lat. Am. J. Probab. Math. Stat.}, 4:257--283.

\bibitem[Choi and Reimherr, 2018]{CR2018}
Choi, H. and Reimherr, M. (2018).
\newblock A geometric approach to confidence regions and bands for functional
  parameters.
\newblock {\em J. R. Stat. Soc. Ser. B. Stat. Methodol.}, 80(1):239--260.

\bibitem[Daoudi et~al., 1998]{DVM1998}
Daoudi, K., L\'evy~V\'ehel, J., and Meyer, Y. (1998).
\newblock Construction of continuous functions with prescribed local
  regularity.
\newblock {\em Constr. Approx.}, 14(3):349--385.

\bibitem[Degras, 2011]{D2011}
Degras, D.~A. (2011).
\newblock Simultaneous confidence bands for nonparametric regression with
  functional data.
\newblock {\em Statist. Sinica}, 21(4):1735--1765.

\bibitem[Dette et~al., 2020]{DKA2020}
Dette, H., Kokot, K., and Aue, A. (2020).
\newblock Functional data analysis in the {B}anach space of continuous
  functions.
\newblock {\em Ann. Statist.}, 48(2):1168--1192.

\bibitem[DeVore and Lorentz, 1993]{devore1993constructive}
DeVore, R.~A. and Lorentz, G.~G. (1993).
\newblock {\em Constructive approximation}, volume 303.
\newblock Springer Science \& Business Media.

\bibitem[Devroye et~al., 2017]{devroye2017}
Devroye, L., Gy\"orfi, L., Lugosi, G., and Walk, H. (2017).
\newblock On the measure of {V}oronoi cells.
\newblock {\em J. Appl. Probab.}, 54(2):394--408.

\bibitem[Efromovich, 1999a]{Ef1999}
Efromovich, S. (1999a).
\newblock How to overcome the curse of long-memory errors.
\newblock {\em IEEE Trans. Inform. Theory}, 45(5):1735--1741.

\bibitem[Efromovich, 1999b]{efro2008}
Efromovich, S. (1999b).
\newblock {\em Nonparametric curve estimation}.
\newblock Springer Series in Statistics. Springer-Verlag, New York.
\newblock Methods, theory, and applications.

\bibitem[Gertheiss et~al., 2024]{GRLG2024}
Gertheiss, J., Rügamer, D., Liew, B. X.~W., and Greven, S. (2024).
\newblock Functional data analysis: An introduction and recent developments.
\newblock {\em Biom. J.}, 66(7):e202300363.

\bibitem[Golovkine et~al., 2022]{golo1}
Golovkine, S., Klutchnikoff, N., and Patilea, V. (2022).
\newblock {Learning the smoothness of noisy curves with application to online
  curve estimation}.
\newblock {\em Electron. J. Stat.}, 16(1):1485--1560.

\bibitem[Golovkine et~al., 2025]{Golovkine2021}
Golovkine, S., Klutchnikoff, N., and Patilea, V. (2025).
\newblock Adaptive optimal estimation of irregular mean and covariance
  functions.
\newblock {\em Bernoulli}, 31(2):forthcoming.

\bibitem[Guiheneuf and L{\'e}vy~V{\'e}hel, 1998]{guiheneuf19982}
Guiheneuf, B. and L{\'e}vy~V{\'e}hel, J. (1998).
\newblock {2-Microlocal Analysis and Application in Signal Processing}.
\newblock In {\em {International Wavelet Conference}}, Tanger, Morocco.

\bibitem[Handscomb, 1966]{scomb66}
Handscomb, D.~C., editor (1966).
\newblock {\em Methods of numerical approximation (chapter 23: {F}unctions of
  many variables)}.
\newblock Pergamon Press, Oxford-New York-Toronto, Ont.
\newblock Lectures delivered at a Summer School held at Oxford University,
  Oxford, September, 1965.

\bibitem[Henze, 1987]{Henze87}
Henze, N. (1987).
\newblock On the fraction of random points with specified nearest-neighbour
  interrelations and degree of attraction.
\newblock {\em Adv. in Appl. Probab.}, 19(4):873--895.

\bibitem[Hoffmann and Nickl, 2011]{HN2011}
Hoffmann, M. and Nickl, R. (2011).
\newblock On adaptive inference and confidence bands.
\newblock {\em Ann. Statist.}, 39(5):2383--2409.

\bibitem[Jaffard, 1991]{jaffard1991pointwise}
Jaffard, S. (1991).
\newblock Pointwise smoothness, two-microlocalization and wavelet coefficients.
\newblock {\em Publ. Mat.}, 35(1):155--168.
\newblock Conference on Mathematical Analysis (El Escorial, 1989).

\bibitem[Kabatjanskii and Levenstein, 1978]{kabatiansky1978bounds}
Kabatjanskii, G.~A. and Levenstein, V.~I. (1978).
\newblock Bounds for packings on the sphere and in space.
\newblock {\em Problemy Peredaci Informacii}, 14(1):3--25.

\bibitem[Kassi et~al., 2023]{kassi2023}
Kassi, O., Klutchnikoff, N., and Patilea, V. (2023).
\newblock Learning the regularity of multivariate functional data.
\newblock {\em arXiv preprint arXiv:2307.14163}.

\bibitem[Kassi and Wang, 2024]{kassi2024structural}
Kassi, O. and Wang, S.~G. (2024).
\newblock Structural adaptation via directional regularity: rate accelerated
  estimation in multivariate functional data.
\newblock {\em arXiv preprint arXiv:2409.00817}.

\bibitem[Koner and Staicu, 2023]{KS2023}
Koner, S. and Staicu, A.-M. (2023).
\newblock Second-generation functional data.
\newblock {\em Annu. Rev. Stat. Appl.}, 10(Volume 10, 2023):547--572.

\bibitem[Leluc et~al., 2025]{leluc2024speeding}
Leluc, R., Portier, F., Segers, J., and Zhuman, A. (2025).
\newblock Speeding up {M}onte {C}arlo integration: Control neighbors for
  optimal convergence.
\newblock {\em Bernoulli}, 31(2):forthcoming.

\bibitem[Liebl and Reimherr, 2023]{LR2023}
Liebl, D. and Reimherr, M. (2023).
\newblock Fast and fair simultaneous confidence bands for functional
  parameters.
\newblock {\em J. R. Stat. Soc. Ser. B. Stat. Methodol.}, 85(3):842--868.

\bibitem[Lifshits, 1995]{L1995}
Lifshits, M.~A. (1995).
\newblock {\em Gaussian random functions}, volume 322 of {\em Mathematics and
  its Applications}.
\newblock Kluwer Academic Publishers, Dordrecht.

\bibitem[Mason, 1980]{mason1980near}
Mason, J.~C. (1980).
\newblock Near-best multivariate approximation by {F}ourier series, {C}hebyshev
  series and {C}hebyshev interpolation.
\newblock {\em J. Approx. Theory}, 28(4):349--358.

\bibitem[N{\'e}meth, 2014]{nemeth2014}
N{\'e}meth, Z. (2014).
\newblock On multivariate de {L}a {V}all{\'e}e {P}oussin-type projection
  operators.
\newblock {\em J. Approx. Theory}, 186:12--27.

\bibitem[Novak, 2016]{novak2016some}
Novak, E. (2016).
\newblock Some results on the complexity of numerical integration.
\newblock In {\em Monte Carlo and Quasi-Monte Carlo Methods}, pages 161--183.
  Springer, [Cham].

\bibitem[Oates et~al., 2017]{oates2017control}
Oates, C.~J., Girolami, M., and Chopin, N. (2017).
\newblock Control functionals for {Monte Carlo} integration.
\newblock {\em J. R. Stat. Soc. Ser. B. Stat. Methodol.}, 79(3):695--718.

\bibitem[Patschkowski and Rohde, 2019]{PR2019}
Patschkowski, T. and Rohde, A. (2019).
\newblock Locally adaptive confidence bands.
\newblock {\em Ann. Statist.}, 47(1):349--381.

\bibitem[Reinert and R\"ollin, 2009]{MVNAReinert2009}
Reinert, G. and R\"ollin, A. (2009).
\newblock Multivariate normal approximation with {S}tein's method of
  exchangeable pairs under a general linearity condition.
\newblock {\em Ann. Probab.}, 37(6):2150--2173.

\bibitem[Revuz and Yor, 1999]{Yor1999}
Revuz, D. and Yor, M. (1999).
\newblock {\em Continuous martingales and {B}rownian motion}, volume 293 of
  {\em Grundlehren der mathematischen Wissenschaften [Fundamental Principles of
  Mathematical Sciences]}.
\newblock Springer-Verlag, Berlin, third edition.

\bibitem[Robins and van~der Vaart, 2006]{RV2006}
Robins, J. and van~der Vaart, A. (2006).
\newblock Adaptive nonparametric confidence sets.
\newblock {\em Ann. Statist.}, 34(1):229--253.

\bibitem[Seuret, 2016]{S2016}
Seuret, S. (2016).
\newblock Multifractal analysis and wavelets.
\newblock In Aldroubi, A., Cabrelli, C., Jaffard, S., and Molter, U., editors,
  {\em New Trends in Applied Harmonic Analysis: Sparse Representations,
  Compressed Sensing, and Multifractal Analysis}, pages 19--65. Springer
  International Publishing, Cham.

\bibitem[Shen and Hsing, 2020]{hsing2020}
Shen, J. and Hsing, T. (2020).
\newblock Hurst function estimation.
\newblock {\em Ann. Stat.}, 48(2):838 -- 862.

\bibitem[Szili and V{\'e}rtesi, 2009]{szili2009}
Szili, L. and V{\'e}rtesi, P. (2009).
\newblock On multivariate projection operators.
\newblock {\em J. Approx. Theory}, 159(1):154--164.

\bibitem[Telschow and Schwartzman, 2022]{TS2022}
Telschow, F. J.~E. and Schwartzman, A. (2022).
\newblock Simultaneous confidence bands for functional data using the
  {G}aussian kinematic formula.
\newblock {\em J. Statist. Plann. Inference}, 216:70--94.

\bibitem[Timan, 1994]{timan_book}
Timan, A.~F. (1994).
\newblock {\em Theory of approximation of functions of a real variable}.
\newblock Dover Publications, Inc., New York.

\bibitem[Wang et~al., 2025]{wang2023adaptive}
Wang, S.~W., Patilea, V., and Klutchnikoff, N. (2025).
\newblock Adaptive functional principal components analysis.
\newblock {\em J. R. Stat. Soc. Ser. B. Stat. Methodol. In press.}

\bibitem[Wang et~al., 2020]{WWWO2020}
Wang, Y., Wang, G., Wang, L., and Ogden, R.~T. (2020).
\newblock Simultaneous confidence corridors for mean functions in functional
  data analysis of imaging data.
\newblock {\em Biometrics}, 76(2):427--437.

\bibitem[Xiao, 2020]{X2020}
Xiao, L. (2020).
\newblock Asymptotic properties of penalized splines for functional data.
\newblock {\em Bernoulli}, 26(4):2847--2875.

\bibitem[Yarger et~al., 2022]{YSH2022}
Yarger, D., Stoev, S., and Hsing, T. (2022).
\newblock {A functional-data approach to the Argo data}.
\newblock {\em Ann. Appl. Stat.}, 16(1):216 -- 246.

\bibitem[Zhang and Wang, 2016]{ZW2016}
Zhang, X. and Wang, J.-L. (2016).
\newblock From sparse to dense functional data and beyond.
\newblock {\em Ann. Statist.}, 44(5):2281--2321.

\end{thebibliography}

%------------------------------------------------------------------
\end{document}